\newtheorem{theorem}{Theorem}
\newtheorem{lemma}[theorem]{Lemma}
\newtheorem{proposition}[theorem]{Proposition}
\newtheorem{corollary}[theorem]{Corollary}
\newtheorem{problem}[theorem]{Problem}
\newtheorem{question}[theorem]{Question}
\theoremstyle{definition}
\newtheorem{definition}[theorem]{Definition}
\newtheorem{example}[theorem]{Example}
\newtheorem{remark}[theorem]{Remark}
\newtheorem{note}[theorem]{Note}
\newtheorem*{acknowledgement}{Acknowledgement}
\newcommand{\supp}{{\rm supp}\hskip0.02cm}
\newcommand{\1}{\mathbf{1}}
\newcommand{\tp}{{\rm TP}\hskip0.02cm}
\newcommand{\tc}{{\rm TC}\hskip0.02cm}
\def\lip{\hskip0.02cm{\rm Lip}\hskip0.01cm}
\newcommand{\orth}{\rm orth}
\newcommand{\BM}{\rm BM}
\newcommand{\gr}{\mathscr G}
\newcommand{\ba}{\mathcal B}
\newcommand{\x}{\mathscr X}
\newcommand{\rel}{\mathscr R}
\def\cay{\hskip0.02cm{\rm Cay}\hskip0.01cm}
\def\red{\mathbb{R}^E}
\def\rv{\mathbb{R}^V}
\newcommand{\ta}{{\rm tail}\hskip0.01cm}
\newcommand{\he}{{\rm head}\hskip0.01cm}
\newcommand{\rank}{{\rm rank}\hskip0.02cm}
\newcommand{\au}{{\rm Aut}\hskip0.02cm}
\newcommand{\cy}{{\rm Cyc}\hskip0.02cm}
\begin{document}
\title[Cycle spaces]{Cycle spaces: invariant projections and applications to transportation cost} 
\dedicatory{This paper is dedicated to Professor Per H. Enflo}




\author[S.\,J. Dilworth]{Stephen J. Dilworth}

\address{Department of Mathematics, University of South Carolina, Columbia,
SC 29208, USA}

\email{dilworth@math.sc.edu}


\author[D. Kutzarova]{Denka Kutzarova}

\address{Department of Mathematics, University of Illinois Urbana-Champaign,
Urbana, IL 61807, USA; Institute of Mathematics and Informatics, Bulgarian Academy of Sciences, Sofia, Bulgaria}

\email{denka@illinois.edu}


\author[M.\,I.~ Ostrovskii]{Mikhail I.~Ostrovskii}

\address{Department of Mathematics and Computer Science, St. John's
University, 8000 Utopia Parkway, Queens, NY 11439, USA}
  \email{ostrovsm@stjohns.edu}

\date{\today}

\subjclass{46B04, 20B25, 20C15, 52A21, 90B06}

\keywords{cycle space, invariant projection, irreducible
representation, transportation cost, Wasserstein distance}

\begin{abstract} The paper starts with discussion of applications of cycle spaces to transportation cost. After a short survey of the known results
on cycle spaces, we turn to the study of minimal projections onto
cycle spaces in the corresponding $\ell_1$-spaces. This study is
naturally related to the study of invariant projections on the
cycle space, which, in turn, are determined by the properties of
representations of the automorphism group of the corresponding
graph. The main focus is on discrete tori and Hamming graphs.
\end{abstract}

\maketitle

\tableofcontents

\section{Introduction}

\subsection{Cycle and cut spaces}\label{S:CycCut}

Consider a connected graph $G$ with vertex set $V$ and edge set
$E$ (disconnected graphs can be analyzed componentwise). We
consider only {\it simple} graphs (that is, graphs without loops
and parallel edges). We follow the graph theory terminology of
\cite{BM08,Die17}.

In some cases, we consider \emph{weighted graphs}. This means that
there are positive numbers assigned to edges, and that these
numbers satisfy the following condition: the weight of the edge
$uv$ does not exceed the total weight of any path joining $u$ and
$v$. We endow the vertex set of such graphs with the shortest path
distance: the distance $d(u,v)=d_G(u,v)=$(the minimal total weight
of paths joining $u$ and $v$). For graphs with no weights we
assume that the weight (length) of an edge is equal to $1$.

In the other direction, each finite metric space $(X,d)$ can be
identified with an apposite weighted graph. This weighted graph
will be specified uniquely according to the procedure described
below and - following \cite{AFGZ21} - will be called the
\emph{canonical graph associated with the metric space}. The graph
is defined as follows. First, we consider a complete weighted
graph with vertices in $X$, and the weight of an edge $uv$
($u,v\in X$) defined as $d(u,v)$. After that we delete all edges
$uv$ for which there exists a vertex $w\notin\{u,v\}$ satisfying
$d(u,w)+d(w,v)=d(u,v)$. It is easy to see that, for a finite
metric space, the described procedure is a well defined and leads
to a uniquely determined by $(X,d)$ weighted graph, which we
denote $G(X, E)$, where $E=E(X)$ is the edge set of the canonical
graph, and $X$ serves as a vertex set of $G$.

As a result, the constructed in this way graph $G(X, E)$ possesses
the following  feature, which is crucial for its application in
the theory of the transportation cost spaces:
 the weighted graph distance of
$G(X, E)$ on $X$ coincides with the original metric on $X$. Note
that if $X$ is defined initially as a weighted graph with its
weighted graph distance, then the corresponding canonical graph
can be different because some edges can be dropped. However, if we
start with a simple unweighted graph endowed with the graph
distance, we will recover it as a canonical graph.

To proceed,  we  specify an orientation on the edge set $E$. This
means that we fix a \emph{direction} on the edges by selecting the
\emph{head} and \emph{tail} of for each edge. The orientation can
be chosen arbitrarily as the notions important for further
reasoning do not depend on the choice of this orientation. We call
this orientation a \emph{reference orientation}.

Notation: if $u$ is a tail and $v$ is a head of a directed edge
$e=uv$, we denote the directed edge either $\overrightarrow{uv}$
or $\overleftarrow{vu}$. We also write $u=\ta(e)=e^-$ and
$v=\he(e)=e^+$.

\begin{note} For some `well-organized' graphs there exist convenient choices of reference orientations.
\end{note}

The \emph{incidence matrix} $D$ of an oriented (= directed) graph
$G$ is defined as a matrix whose rows are labelled using vertices
of $G$, columns and labelled using edges of $G$ and the $ve$-entry
is given by
\[
d_{ve}=\begin{cases} 1, & \hbox{ if }v=e^+,\\
-1, & \hbox{ if }v=e^-,\\
0, & \hbox{ if $v$ is not incident to }e.\end{cases} \]

Denote by $\red$ and $\rv$ the space of real-valued functions on
the edge set and the vertex set, respectively. Interpreting
elements of $\red$ and $\rv$ as column vectors, we may regard $D$
as a matrix of a linear operator $D:\red\to \rv$. We also consider
the transpose matrix $D^T$ and the corresponding operator
$D^T:\rv\to \red$.\medskip

It is easy to describe $\ker D^T$. In fact, for $f\in \rv$ the
value of $D^T(f)\in \red$ at an edge $e$ is $f(e^+)-f(e^-)$,
therefore $f\in\ker D^T$ if and only if it has the same value at
the ends of each edge. Since we assumed that $G$ is connected,
this happens if and only if $f$ is constant on $V(G)$. Therefore
the ranks of the operators $D^T$ and $D$ are equal to
$|V|-1$.\medskip

Observe that
\begin{equation}\label{E:DF}(DF)(v)=\sum_{e,~
e^+=v}F(e)-\sum_{e,~e^-=v}F(e)\end{equation} for $F\in  \red$.

\begin{definition}\label{D:CycleCutSp}
We introduce the \emph{cycle space} (also called the \emph{flow
space} or \emph{cycle subspace}) of the graph $G$ as $Z=Z(G)=\ker
D\subseteq\red$. The name is chosen because, as we explain below,
$Z(G)$ is spanned by the set of signed indicator functions (see
\eqref{E:SignInd}) of all cycles in $G$. The orthogonal complement
of $Z$ in $\red$ (with respect to the standard inner product) is
called the {\it cut space} or {\it cut subspace} of $G$; we denote
it $B=B(G)$. This name is chosen because $B(G)$ is spanned by {\it
cuts}, that is, by the signed indicator functions of sets of the
edges joining a subset $U$ in $V(G)$ with its complement $\bar U$.
These edge sets are oriented in such a way that heads of all edges
are in $U$.
\end{definition}

Cycle, cut, and flow spaces are standard notions of the algebraic
graph theory; see \cite{Big93,Big97,GR01}.

Consider a cycle $C$ in $G$ (in a graph-theoretical sense) with
one of the two possible orientations satisfying the following
condition: each vertex of $C$ is a head of exactly one edge and a
tail of exactly one edge. We introduce the \emph{signed indicator
function} $\chi_C\in \red$ of the oriented cycle\index{signed
indicator function of an oriented cycle} $C$ (in an oriented graph
$G$) by
\begin{equation}\label{E:SignInd}
\chi_C(e)=\begin{cases} 1 & \hbox{ if }e\in C\hbox{ and its
orientations in $C$ and $G$ are the same}\\
-1 & \hbox{ if }e\in C\hbox{ but its orientations in $C$ and $G$
are different}
\\
0 & \hbox{ if }e\notin C.
\end{cases}\end{equation}

It is immediate from \eqref{E:DF} that $\chi_C\in\ker D$. To show
that signed indicator functions span $\ker D$ we observe that
$\dim(\ker D)=|E(G)|-\rank(D)=|E(G)|-|V(G)|+1$. To see that there
are $|E(G)|-|V(G)|+1$ linearly independent cycles we pick a
spanning tree in $G$ and consider all cycles formed by the
spanning tree and edges of $G$ which are not in the tree (there
are $|E(G)|-|V(G)|+1$  of them). It is easy to see that these
cycles are linearly independent

This computation also implies that the dimension of the cut space
in $G$ is $|E(G)|-(|E(G)|-|V(G)|+1)=|V(G)|-1$ (for a connected
graph $G$).

There is a very useful standard basis in $B(G)$: consider all
vertices of $G$ except one, and consider cuts joining them with
the rest of the graph (that is we use the definition of a cut
joining $U$ and $\bar U$ for $U=\{v\}$). Also, as for cycle
spaces, there exist bases for $B(G)$ corresponding to spanning
trees. Namely, for each spanning tree and each edge in it consider
the cut $(U,\bar U)$ consisting of vertices of two connected
components of the spanning tree with a removed edge.

\subsection{Transportation cost spaces}

Let $(X,d)$ be a finite  metric space (as is noted above $X$ can
be identified with the canonical weighted graph having $X$ as its
vertex set). Consider a real-valued finitely supported function
$f$ on $X$ with a zero sum, that is, $\sum_{v\in \supp f}f(v)=0$.
A natural and important interpretation of such a function, is
considering it as a \emph{transportation problem}: one needs to
transport certain product from locations where $f(v)>0$ to
locations where $f(v)<0$.

One can easily see that $f$ can be  represented as
\begin{equation}\label{E:TranspPlan} f=a_1(\1_{x_1}-\1_{y_1})+a_2(\1_{x_2}-\1_{y_2})+\dots+
a_n(\1_{x_n}-\1_{y_n}),\end{equation} where $a_i\ge 0$,
$x_i,y_i\in X$, and $\1_u(x)$ for $u\in X$ is the {\it indicator
function} of $u$, defined by
\[\1_u(x)=\begin{cases} 1 &\hbox{ if }x=u,\\ 0 &\hbox{ if }x\ne u.
\end{cases} \]

We call each such representation a \emph{transportation plan} for
$f$ because it can be interpreted as a plan of moving $a_i$ units
of the product from $x_i$ to $y_i$. The \emph{cost} of the
transportation plan \eqref{E:TranspPlan} is defined as
$\sum_{i=1}^n a_id(x_i,y_i)$.

We denote the real vector space of all transportation problems by
$\tp(X)$. We introduce the \emph{transportation cost norm} (or
just \emph{transportation cost}) $\|f\|_{\tc}$ of a transportation
problem $f$ as the infimum of costs of transportation plans
satisfying \eqref{E:TranspPlan}. Using the triangle inequality and
the fact that $f$ is finitely supported it is easy to see that the
infimum of costs of transportation plans for $f$ is attained. A
transportation plan for $f$ whose cost is equal to $\|f\|_\tc$ is
called an \emph{optimal transportation plan}.

\begin{definition}\label{D:TC}
The finite-dimensional normed space $(\tp(X),\|\cdot\|_\tc)$ is
called a \emph{transportation cost space} and is denoted by
$\tc(X)$.
\end{definition}

Transportation cost spaces are of interest in many areas and are
studied under many different names. We list some of them in the
alphabetical order: \emph{Arens-Eells space, earth mover distance,
Kantorovich-Rubinstein distance, Lipschitz-free space, Wasserstein
distance}. We prefer to use the term {\it transportation cost
space} since it makes the subject of this work instantly clear to
a wide circle of readers and it also reflects the historical
approach leading to these notions (see \cite{Kan42,KG49}).
Interested readers can find a review of the main definitions,
notions, facts, terminology and historical notes pertinent to the
subject in \cite[Section 1.6]{OO19}.

Some more definitions which we use. Let  $\mu$ and $\nu$ be
probability measures on $X$. Then  $\mu = \sum_{x \in X} p_x
\delta_x$ and $\nu = \sum_{x \in X} q_x \delta_x$, where $p_x, q_x
\ge 0$, $\sum_{x \in X} p_x= \sum_{x \in X} q_x = 1$, and
$\delta_x$ denotes a unit point mass located at $x$.  The
associated transportation problem $f(\mu, \nu)$ is defined by
$f(\mu,\nu)(x) = p_x - q_x$.  Let $\mathcal{P}_1(X)$ be the set of
all probability measures on $X$. The \emph{$1$-Wasserstein
distance} $d_{W_1}$ on $\mathcal{P}(X)$ is defined by
$$d_{W_1}(\mu,\nu) = \|f(\mu,\nu)\|_{\tc}.$$

By a \emph{pointed metric space} we mean a metric space $(X,d)$
with a \emph{base point}, denoted by $O$. For a pointed metric
space $X$ by $\lip_0(X)$ we denote the space of all Lipschitz
functions $f:X\to\mathbb{R}$ satisfying $f(O)=0$. It is not
difficult to check that $\lip_0(X)$ is a Banach space with respect
to the norm $\|f\|=\lip(f)$ ($\lip(f)$ is the Lipschitz constant
of $f$). As is well known $\tc(X)^*=\lip_0(X)$ (see e.g.
\cite[Section 10.2]{O}).

We use the standard terminology of Banach space theory \cite{BL00}
and the theory of metric embeddings \cite{O}.

\subsection{Applications of cycle spaces to the theory of transportation cost}
We introduce several norms on $\red$. Let $G=(V(G),E(G))=(V,E)$ be
a finite graph. Then $\ell_1(E)$, $\ell_2(E)$, and
$\ell_\infty(E)$ are defined as $\red$ with the norms
$\|f\|_1=\sum_{e\in E}|f(e)|$, $\|f\|_2=\left(\sum_{e\in
E}|f(e)|^2\right)^{\frac12}$, and $\|f\|_\infty=\max_{e\in
E}|f(e)|$, respectively. We also consider the inner product
$\langle f,g\rangle$ associated with $\|f\|_2$.

If $G=(V(G),E(G))=(V,E)$ is the canonical (weighted) graph for a
metric space $(X,d)$, with the weight of each edge $uv\in E(X)$
given as $d(u,v)$, we introduce also the norm
$\|f\|_{1,d}=\sum_{uv\in E}|f(uv)|d(u,v)$, and denote the
corresponding normed space $\ell_{1,d}(E)$.

The following important observations establish relations between
the cycle spaces and transportation cost spaces.

\begin{enumerate}[{\bf (1)}]

\item The space $\tc(X)$ is isometric to the quotient space
$\ell_{1,d}(E)/Z(G)$. In the case of an unweighted graph $G$, the
space $\tc(G)$ is isometric to $\ell_1(E)/Z(G)$. The result was
proved in the latter case in \cite[Proposition 10.10]{O}. It was
generalized to the weighted case in \cite{OO20}. Different proofs
were obtained in \cite{AFGZ21,DKO2}. We recommend the readers to
check the simple transparent proof in \cite[Proposition
1.5]{OO22}.

\item Each isometry of $\tc(X)$ and $\tc(Y)$ is induced by a
cycle-preserving bijection of $E(X)$ and $E(Y)$ \cite{AFGZ21}.
This result allowed the authors of \cite{AFGZ21} to use Whitney's
result \cite{Whi33} on cycle-preserving bijections and classify
isometries of transportation cost spaces on finite metric spaces.
In particular, this result implies that if the canonical graphs
associated with $X$ and $Y$ are $3$-connected, then each isometry
between $\tc(X)$ and $\tc(Y)$ is induced by an isomorphism of the
corresponding graphs. (In the other direction the result holds if
the graphs are unweighted, or the weights are distributed
according to one of the isomorphisms. See a similar result for
Sobolev spaces on graphs in \cite{Ost07}.)

\end{enumerate}

The result {\bf (1)} shows that for a better understanding of the
structure of $\tc(X)$ it is crucial to understand the structure of
$Z(G)$ (as a subspace of $\ell_{1,d}(G)$) and its position in
$\ell_{1,d}(G)$.

\subsection{Old and new results and observations on cycle
spaces}\label{S:OldNew} In this section and in the rest of the
paper we restrict our attention to unweighted graphs.

\begin{enumerate}[{\rm (A)}]

\item It seems that the oldest result which can be regarded as a
result about cycle spaces is the following result of Erd\H{o}s and
P\'osa \cite{EP62} (reprinted in \cite{Erd73}): The cycle space of
dimension $d$ contains at least roughly $d/\log d$ edge-disjointly
supported vectors, and thus a $1$-complemented subspace in
$\ell_1(E)$ of the corresponding dimension.

\item The well-known observation about quotient spaces (see
\cite[Fact 4.4]{DKO}) implies that in order to show that $\tc(G)$
is ``far'' from $\ell_1^n$ with respect to the Banach-Mazur
distance, it suffices to show that the relative projection
constant $\lambda(Z(G),\ell_1(E))$ is large. This inspired the
work of \cite{DKO,DKO2} on estimates for $\lambda(Z(G),\ell_1(E))$
from below.

\item \label{I:Inv} Another tool which was systematically used in
this context, was the observation which goes back at least to
Gr\"unbaum \cite{Gru60} (see also later papers
\cite{Rud62,And78}). The observation states that the projection of
minimal norm onto a given subspace (in cases when existence is
obvious) can be found among so-called invariant projections onto
the subspace. We mean the following situation. Let $Y_0$ be a
subspace of a finite-dimensional Banach space $Y_1$ (there exist
infinite-dimensional generalizations, but we do not need them in
this paper). Let $\Omega$ be a finite group of isometries of $Y_1$
leaving invariant the subspace $Y_0$. A projection $P:Y_1\to Y_0$
is called \emph{invariant} if $P\omega=\omega P$ for every
$\omega\in\Omega$. It is easy to construct an invariant
projections starting with any projection $P_0:Y_1\to Y_0$. Namely,
one can easily check that
\begin{equation}\label{E:InvProj}
P_{\rm
inv}:=\frac1{|\Omega|}\sum_{\omega\in\Omega}\omega^{-1}P\omega
\end{equation}
is an invariant projection onto $Y_0$ and that $\|P_{\rm inv}\|\le
\|P_0\|$.

If we start by choosing a projection $P$ with the minimal possible
norm, then $\|P_{\rm inv}\|=\|P\|$. This leads us to a conclusion:
To find/estimate the relative projection constant
$\lambda(Y_0,Y_1):=\inf\{\|P\|:~P:Y_1\to Y_0$ is a projection$\}$,
it suffices to compute/estimate norms of invariant projections. In
this connection it becomes important to understand:

\begin{problem}\label{P:UnInvProj} In what cases is an invariant projection
$P:\ell_1(E)\to Z(G)$ unique?
\end{problem}

In cases where the invariant projection $P:\ell_1(E)\to Z(G)$ is
not unique, it is important to understand the structure of the set
of invariant projections.\medskip

\item We present a condition for uniqueness of invariant
projections in terms of group representations (Theorem
\ref{T:InvProjEqSub} and Corollary \ref{C:InvProjEqSub} below).
Our sources for basic facts of representation theory are
\cite{Ser77,Sim96}.
\end{enumerate}

We introduce the group $\cy(G)$ of isometries of $\ell_1(E)$ for
which $Z(G)$ is an invariant subspace. If the graph $G$ is
$3$-connected, then, by the mentioned in {\bf (2)} result of
Whitney \cite{Whi33}, the group $\cy(G)$ is canonically isomorphic
to the group $\au(G)$ of automorphisms of the graph $G$.

Since isometries of $\ell_1(E)$ are signed permutations of the
unit vector basis, they can also be regarded as orthogonal
operators on $\ell_2(E)$. This observation implies that not only
$Z(G)$, but also $B(G)$ is an invariant subspace of $\cy(G)$ and
$\au(G)$.

Thus, we have two natural representations of the groups $\cy(G)$
and $\au(G)$: one on $Z(G)$ and the other on $B(G)$. Both $Z(G)$
and $B(G)$ are regarded as subspaces of $\ell_2(E)$.

Note that representation theory is mostly devoted to unitary
representations in Hilbert spaces over complex field. However,
some of the results of this theory continue to hold for orthogonal
representations in Hilbert spaces over $\mathbb{R}$. This aspect
of the representation theory is presented in \cite{Sim96}.

\begin{theorem}\label{T:InvProjEqSub} The invariant projection $P:\ell_2(E)\to Z(G)$ is
unique if and only if the natural representations of $\cy(G)$ on
$Z(G)$ and $B(G)$ do not have equivalent restrictions to non-zero
subspaces.
\end{theorem}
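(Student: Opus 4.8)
The plan is to translate the uniqueness question into a statement about intertwining operators between the two representations, and then to settle that statement using complete reducibility of orthogonal representations.

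First I would fix the orthogonal decomposition $\ell_2(E)=Z(G)\oplus B(G)$ (recall $B(G)$ is by definition the orthogonal complement of $Z(G)$) and describe an arbitrary projection onto $Z(G)$ in block form relative to this splitting. Writing a generic vector as $z+b$ with $z\in Z(G)$ and $b\in B(G)$, any projection $P$ with range $Z(G)$ must act as the identity on $Z(G)$ and must carry $b$ into $Z(G)$; hence $P(z+b)=z+Cb$ for a uniquely determined linear map $C:B(G)\to Z(G)$, and conversely every such $C$ defines a projection onto $Z(G)$. Thus the projections onto $Z(G)$ are parameterized by $\operatorname{Hom}(B(G),Z(G))$, with the orthogonal projection corresponding to $C=0$.

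Next I would impose invariance. Each $\omega\in\cy(G)$ acts as an orthogonal operator preserving both $Z(G)$ and $B(G)$, so it is block-diagonal for the decomposition; write $\rho(\omega)=\omega|_{Z(G)}$ and $\sigma(\omega)=\omega|_{B(G)}$ for the two natural representations. A direct block computation then shows that $P\omega=\omega P$ holds for all $\omega$ if and only if $\rho(\omega)C=C\sigma(\omega)$ for every $\omega\in\cy(G)$, i.e. if and only if $C$ is an intertwiner from $\sigma$ to $\rho$. Consequently the invariant projection is unique (necessarily equal to the orthogonal projection) precisely when the only intertwiner $C:B(G)\to Z(G)$ is $C=0$, that is, when $\operatorname{Hom}_{\cy(G)}(B(G),Z(G))=0$.

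Finally I would prove that $\operatorname{Hom}_{\cy(G)}(B(G),Z(G))=0$ is equivalent to the absence of equivalent nonzero subrepresentations, using that orthogonal representations are completely reducible because the orthogonal complement of an invariant subspace is again invariant. If a nonzero intertwiner $C$ exists, then restricting it to an invariant complement of $\ker C$ yields an isomorphism onto the nonzero invariant subspace $C(B(G))\subseteq Z(G)$, exhibiting equivalent nonzero subrepresentations of $\sigma$ and $\rho$. Conversely, given an isomorphism $\phi$ between nonzero invariant subspaces $B'\subseteq B(G)$ and $Z'\subseteq Z(G)$, extending $\phi$ by zero on an invariant complement of $B'$ produces a nonzero intertwiner. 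The conceptual heart of the argument is the identification of invariant projections with intertwiners in the first two steps, which is routine linear algebra. The step requiring the most care is this last equivalence: one must apply complete reducibility over $\mathbb{R}$ correctly and check that the extension-by-zero of $\phi$ is genuinely equivariant. This is where the real (rather than complex) setting enters, but the orthogonality of the representations lets it go through without the usual subtleties of real representation theory, so I expect no serious obstacle.
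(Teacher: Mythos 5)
Your proposal is correct and follows essentially the same route as the paper: the block decomposition $\ell_2(E)=Z(G)\oplus B(G)$, the identification of invariant projections with intertwiners $C:B(G)\to Z(G)$, and the construction $C=\phi P_{B'}$ for the converse all match the paper's argument. The only (harmless) variation is in the forward direction, where you pass to an invariant complement of $\ker C$ directly, whereas the paper decomposes $B(G)$ into irreducibles and invokes Schur's lemma on a summand where $C$ is nonzero; both rest on the same fact that orthogonal complements of invariant subspaces of an orthogonal representation are invariant.
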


Combining this theorem with the mentioned in {\bf (2)} result of
Whitney \cite{Whi33}, we get the following corollary.

\begin{corollary}\label{C:InvProjEqSub} For a $3$-connected graph $G$, the invariant projection $P:\ell_2(E)\to Z(G)$ is
unique if and only if the natural representations of $\au(G)$ on
$Z(G)$ and $B(G)$ do not have equivalent restrictions to non-zero
subspaces.
\end{corollary}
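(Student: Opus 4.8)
The plan is to read the corollary off from Theorem~\ref{T:InvProjEqSub} by transporting its representation-theoretic hypothesis from the group $\cy(G)$ to the automorphism group $\au(G)$; no new analytic input is needed, only the identification of these two groups for $3$-connected graphs together with a routine check that this identification is compatible with the two natural representations.

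First I would recall the canonical homomorphism $\au(G)\to\cy(G)$. A graph automorphism $\phi$ permutes the edge set $E$ and, after we record for each edge whether $\phi$ preserves or reverses its reference orientation, induces a signed permutation of the unit vector basis of $\ell_1(E)$; this signed permutation is an isometry of $\ell_1(E)$ and sends the signed indicator function $\chi_C$ of an oriented cycle to $\pm\chi_{\phi(C)}$, hence leaves $Z(G)$ invariant and lies in $\cy(G)$. As recalled in the text just before Theorem~\ref{T:InvProjEqSub}, Whitney's theorem \cite{Whi33} guarantees that for a $3$-connected graph $G$ this homomorphism is an isomorphism: every element of $\cy(G)$ has an underlying cycle-preserving edge bijection, and for $3$-connected $G$ each such bijection is induced by a unique automorphism, with the signs then forced by the action on orientations. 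Thus $\cy(G)$ and $\au(G)$ are the very same group of orthogonal operators on $\ell_2(E)$, carrying two different labelings.

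Finally I would observe that this identification is transparent at the level of representations: by construction the natural representation of $\au(G)$ on $Z(G)$ (respectively $B(G)$) is the composite of the isomorphism $\au(G)\to\cy(G)$ with the natural representation of $\cy(G)$ on $Z(G)$ (respectively $B(G)$). Relabeling a group by an isomorphism sends intertwining operators to intertwining operators, so the representations of $\cy(G)$ on $Z(G)$ and $B(G)$ admit equivalent non-zero subrepresentations precisely when those of $\au(G)$ do. Moreover, since $\cy(G)$ and $\au(G)$ are literally the same operators, a projection onto $Z(G)$ is invariant for one exactly when it is invariant for the other, so the two uniqueness statements coincide. Applying Theorem~\ref{T:InvProjEqSub} then yields the corollary. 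I expect no genuine obstacle; the only point deserving care is the verification that Whitney's cycle-preserving edge bijections lift to signed permutations compatibly with orientations, which is exactly the content of the canonical isomorphism $\cy(G)\cong\au(G)$ already recorded in the text.
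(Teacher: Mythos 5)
Your proposal is correct and follows exactly the paper's route: the paper derives the corollary by combining Theorem~\ref{T:InvProjEqSub} with Whitney's theorem identifying $\cy(G)$ with $\au(G)$ for $3$-connected graphs, which is precisely your argument. You merely spell out the routine compatibility checks (representations and invariant projections transported along the isomorphism) that the paper leaves implicit.
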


\begin{proof}[Proof of Theorem \ref{T:InvProjEqSub}] Observe that the orthogonal projection $P_{\rm orth}:\ell_2(E)\to Z(G)$
is invariant. The best way to see it is to write decompositions of
$P_{\rm orth}$ and an arbitrary $A\in\cy (G)$ with respect to the
orthogonal decomposition $\ell_2(E)=Z(G)\oplus B(G)$. In fact,
invariance of $Z(G)$ and $B(G)$ for any $A\in\cy(G)$, shows that
\[ A=\begin{pmatrix} A_Z & 0\\
0 & A_B\end{pmatrix},\] where $A_Z$ and $A_B$ are the
corresponding restrictions. Also, we have
\[P_{\rm orth}=\begin{pmatrix} I_{Z(G)} & 0\\
0 & 0\end{pmatrix},\] where $I_{Z(G)}$ is the identity operator on
$Z(G)$. The validity of $P_{\rm orth}A=AP_{\rm orth}$ is
immediate.

Observe that any projection of $\ell_2(E)$ onto $Z(G)$ is of the
form
\[P=\begin{pmatrix} I_{Z(G)} & Q\\
0 & 0\end{pmatrix},\] where $Q$ is some linear operator,
$Q:B(G)\to Z(G)$.

It is easy to see that $PA=AP$ is equivalent to
\begin{equation}\label{E:Qintertw} QA_B=A_ZQ.\end{equation}

We show that if $Q\ne 0$, then \eqref{E:Qintertw} implies that
$A_Z$ and $A_B$ have restrictions to non-zero subspaces which are
equivalent representations.

Note, that the observation at the beginning of \cite[Appendix to
III.5, p~50]{Sim96} shows that in the real case one can also
decompose an orthogonal representation into an orthogonal sum of
irreducible over the reals representations. Therefore, the
representations $A_Z$ and $A_B$ of $\cy(G)$ can be decomposed into
irreducible over the reals representations. Let
\[B(G)=V_1\oplus V_2\oplus \dots\oplus V_k\]
be the corresponding decomposition of $B(Z)$. If $Q\ne 0$, then
there exists $i\in\{1,\dots,k\}$ such that $Q|_{V_i}\ne 0$.
\medskip

Since $V_i$ is an invariant subspace for all restrictions $A_B$ of
$A\in\cy(G)$, the formula \eqref{E:Qintertw} implies that $Q(V_i)$
is an invariant subspace for $A_Z$, $A\in\cy(G)$.

Then the usual proof of the first part of Schur's lemma (as in
\cite{Ser77}) shows that the restriction of $A_B$ to $V_i$ and the
restriction of $A_Z$ to $Q(V_i)$ are equivalent representations.

To complete the proof it suffices to show that if $A_Z$ and $A_B$
have equivalent restrictions, then there exists $Q\ne 0$
satisfying \eqref{E:Qintertw}.

In fact, let the equivalent restrictions be to the subspaces
$V\subset B(G)$ and $U\subset Z(G)$. Let $R: V\to U$ be the
isomorphism establishing the equivalence, and let $P_V:B(G)\to V$
be the orthogonal projection. It is easy to see that $Q:B(G)\to
Z(G)$ given by $Q=RP_V$ is the desired operator.
\end{proof}

\subsection{A general problem on representations of automorphism groups}

Corollary \ref{C:InvProjEqSub} makes the following problem very
important. This problem can be regarded as a part of the general
problem of descriptions of representations of the groups $\au(G)$
which are among the most important groups in algebraic
combinatorics; see \cite{Big93, GR01, LS16}.

\begin{problem}\label{P:GenProblem} Given a graph $G$, whether the restrictions of the natural representation of $\au(G)$ on $\ell_2(E)$ to $Z(G)$ and
$B(G)$ have equivalent restrictions to non-zero subspaces?
\end{problem}

Of course, the same problem is of importance also for groups
$\cy(G)$, but these groups are used much less frequently.

We have to mention that we did not find any publications
explicitly stating any results on Problem \ref{P:GenProblem}.

\subsection{Relations between the norm of minimal projection and
other geometric parameters}

Henceforth $\|T\|_1$ (resp.\ $\|T\|_\infty$) denotes  the operator
norm of a linear operator $T \colon \ell_1(E) \rightarrow
\ell_1(E)$ (resp.\ $T \colon \ell_\infty(E) \rightarrow
\ell_\infty(E)$).

The comments in Section \ref{S:OldNew}\ \eqref{I:Inv} show that
the collection of invariant projections onto $Z(G)$ is non-empty
as the orthogonal projection $P_{\orth}$ is invariant. Also they
imply that there exist invariant projections of minimal possible
norm.

Minimal  projections onto $Z(G)$ yield information about the
projection constants of $\lip_0(G)$ and the $L_1$-distortion of
$(\mathcal{P}(G), d_{W_1})$. Let us recall the relevant
definitions.

Let $Y$ be a finite-dimensional normed space and let $Y_1$ be any subspace of $\ell_\infty$  that is isometrically isomorphic to $Y$. Recall that the \textit{projection constant} of $Y$, denoted $\lambda(Y)$, is defined by
$$ \lambda(Y) = \inf \{\|P\| \colon P \colon \ell_\infty \rightarrow \ell_\infty\text{ is a projection with range $Y_1$}\}.$$
(It is known that $\lambda(Y)$ is independent of the choice of
$Y_1$.)

Let $(X,d)$ and $(Y,\rho)$ be metric spaces. An injective map $F:
X\to Y$ is called a {\it bilipschitz embedding} if there exist
constants $C_1,C_2>0$ so that for all $u,v\in X$
\[C_1d(u,v)\le \rho(F(u),F(v))\le C_2d(u,v).\]
The {\it distortion} of $F$ is defined as
$\lip(F)\cdot\lip(F^{-1}|_{F(X)})$, where $\lip(\cdot)$ denotes
the Lipschitz constant.

The {\it $L_1$-distortion} $c_1(X)$ is  defined as the infimum of
distortions of all bilipschitz embeddings of $(X,d)$ into any
space $L_1(\Omega,\Sigma,\mu)$.

Differentiability of Lipschitz maps (see \cite[Theorem 7.9]{BL00})
implies that if $X$ is a finite-dimensional normed space, then
$c_1(X)$ is equal to the infimum of distortions of all linear
embeddings of $X$ into $L_1[0,1]$.

The \textit{Banach-Mazur distance} between   normed spaces $Y_1$
and $Y_2$ of the same finite  dimension is defined by
$$d_{\BM}(Y_1,Y_2) = \inf \{\|T\|\cdot\|T^{-1}\| \colon T\colon Y_1 \rightarrow Y_2 \text{\, is a linear isomorphism}\}.$$

The connection between $L_1$-distortion  and projection constants on the one hand and minimal  projections onto $Z(G)$  on the other is summarized in the following essentially known result.

\begin{proposition} \label{prop: minimalnormprops} Let $P_{\min}$ be a minimal  projection of $\ell_1(E)$ onto $Z(G)$. \begin{itemize}
\item[(i)]
$$ c_1(\tc(G)) \le \|I - P_{\min}\|_1$$ \item[(ii)] $$c_1((\mathcal{P}(G),d_{W_1}))  \le \|I - P_{\min}\|_1.$$
\item[(iii)]
$$\lambda(\lip_0(G)) \ge \|P_{\min}\|_1 - 1.$$ \item[(iv)] For $N = \operatorname{dim}(\tc(G))=|V(G)|-1$, $$d_{\BM}(\tc(G), \ell_1^N) \ge \|P_{\min}\|_1 -1.$$
 \end{itemize}
\end{proposition}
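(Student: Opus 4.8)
Throughout I use the isometry $\tc(G)\cong\ell_1(E)/Z(G)$ from observation \textbf{(1)}, and write $q\colon\ell_1(E)\to\tc(G)$ for the quotient map; it is a metric surjection with $\|q\|_1=1$. Since $P_{\min}$ is a projection onto $Z(G)$, the operator $I-P_{\min}$ is a projection with range a complement $W:=\ker P_{\min}$ of $Z(G)$, so that $q$ restricts to a linear isomorphism $q|_W\colon W\to\tc(G)$.

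For part (i) I would estimate the distortion of the linear embedding $T:=(q|_W)^{-1}\colon\tc(G)\to W\subseteq\ell_1(E)$. One checks that $T$ is the well-defined map sending a class to $(I-P_{\min})$ applied to any of its representatives (well-definedness because $(I-P_{\min})z=z-P_{\min}z=0$ for $z\in Z(G)$), and that $qT=\mathrm{id}$. The upper estimate $\|T\|\le\|I-P_{\min}\|_1$ comes from choosing, for a class $\bar x$, a near-optimal representative $x$ with $\|x\|_1$ approximating $\|\bar x\|_{\tc}$ and writing $T\bar x=(I-P_{\min})x$; the lower estimate $\|T^{-1}\|=\|q|_W\|\le\|q\|_1=1$ is immediate because $q$ is a contraction. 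Hence the distortion of $T$ is at most $\|I-P_{\min}\|_1$, and since $\ell_1(E)$ embeds isometrically into $L_1[0,1]$, the differentiability reduction quoted above gives $c_1(\tc(G))\le\|I-P_{\min}\|_1$. Part (ii) then follows for free: fixing a base point $O$, the map $\mu\mapsto\mu-\delta_O$ is an isometric embedding of $(\mathcal P(G),d_{W_1})$ into $\tc(G)$ (because $(\mu-\delta_O)-(\nu-\delta_O)=f(\mu,\nu)$), so restricting the embedding from part (i) yields $c_1((\mathcal P(G),d_{W_1}))\le c_1(\tc(G))\le\|I-P_{\min}\|_1$.

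For part (iii) I would pass to the dual. Since $\lip_0(G)=\tc(G)^*=(\ell_1(E)/Z(G))^*$ is the annihilator $Z(G)^\perp=B(G)$ carrying the $\ell_\infty(E)$-norm, and since $\ell_\infty(E)$ is $1$-injective and contains this isometric copy of $\lip_0(G)$, the absolute projection constant equals the relative one: $\lambda(\lip_0(G))=\lambda(B(G),\ell_\infty(E))$. Next, taking adjoints sets up a norm-preserving bijection $P\mapsto(I-P)^*$ between projections $P\colon\ell_1(E)\to Z(G)$ and projections $\ell_\infty(E)\to B(G)$, with $\|(I-P)^*\|_\infty=\|I-P\|_1$; hence $\lambda(B(G),\ell_\infty(E))=\inf_P\|I-P\|_1$. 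Finally $\|I-P\|_1\ge\|P\|_1-\|I\|_1\ge\|P_{\min}\|_1-1$ for every such $P$ (using $\|I\|_1=1$ and minimality of $P_{\min}$), which gives $\lambda(\lip_0(G))\ge\|P_{\min}\|_1-1$.

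For part (iv) I would run the lifting argument behind \cite[Fact 4.4]{DKO}. Let $R\colon\ell_1^N\to\tc(G)$ be any isomorphism. Because $q$ is a metric surjection, each basis image $R(e_i)$ lifts to some $u_i\in\ell_1(E)$ with $\|u_i\|_1\le\|R\|+\epsilon$; setting $Se_i=u_i$ produces $S\colon\ell_1^N\to\ell_1(E)$ with $qS=R$ and $\|S\|\le\|R\|+\epsilon$. Then $\tilde S:=SR^{-1}$ satisfies $q\tilde S=\mathrm{id}$, so $P:=I-\tilde Sq$ is a projection onto $Z(G)$ with $\|P\|_1\le 1+\|\tilde S\|\le 1+(\|R\|+\epsilon)\|R^{-1}\|$. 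Minimality gives $\|P_{\min}\|_1\le 1+(\|R\|+\epsilon)\|R^{-1}\|$, and letting $\epsilon\to0$ and infimizing over $R$ yields $\|P_{\min}\|_1-1\le d_{\BM}(\tc(G),\ell_1^N)$. I expect the main obstacle to be the bookkeeping in parts (iii) and (iv): verifying carefully that the $1$-injectivity of $\ell_\infty(E)$ makes the absolute and relative projection constants coincide, and that metric-surjectivity of $q$ permits lifting basis vectors with the stated norm control. Both points are standard but must be invoked precisely.
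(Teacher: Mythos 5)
Your proof is correct, and for parts (i) and (iii) it follows essentially the same path as the paper: (i) is the observation that $\|x\|_1=\|(I-P_{\min})(x+y)\|_1\le\|I-P_{\min}\|_1\|x+y\|_1$ for $x\in\ker P_{\min}$, $y\in Z(G)$, giving a linear embedding of $\ell_1(E)/Z(G)$ onto $\ker P_{\min}\subset L_1$ with the stated distortion; (iii) is the same adjoint argument identifying $\lip_0(G)$ with $(B(G),\|\cdot\|_\infty)$ and using that $I-Q^*$ is a projection onto $Z(G)$ together with $\|Q\|\ge\|I-Q\|-1$. Where you genuinely diverge is in (ii) and (iv). For (ii) the paper invokes the Naor--Schechtman equality $c_1((\mathcal{P}(G),d_{W_1}))=c_1(\tc(G))$ from \cite{NS}, whereas you only prove the inequality $c_1((\mathcal{P}(G),d_{W_1}))\le c_1(\tc(G))$ via the isometric embedding $\mu\mapsto\mu-\delta_O$; that is all the statement needs, and your route is more elementary and self-contained (though it yields slightly less than the cited equality). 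For (iv) the paper argues by duality, $d_{\BM}(\tc(G),\ell_1^N)=d_{\BM}(\lip_0(G),\ell_\infty^N)\ge\lambda(\lip_0(G))$, and then applies (iii); you instead run the lifting argument behind \cite[Fact 4.4]{DKO} directly, building from an isomorphism $R\colon\ell_1^N\to\tc(G)$ a lifting $\tilde S$ with $q\tilde S=\mathrm{id}$ and hence a projection $I-\tilde Sq$ onto $Z(G)$ of norm at most $1+(\|R\|+\epsilon)\|R^{-1}\|$. This is correct (the lifting of basis vectors with norm control works precisely because the domain is $\ell_1^N$, and for finite $E$ the quotient infimum is even attained so $\epsilon$ can be dispensed with), and it buys you a self-contained proof that avoids both the duality identity and the inequality $d_{\BM}(Y,\ell_\infty^N)\ge\lambda(Y)$, at the cost of being a bit longer than the paper's two-line deduction from (iii).
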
 \begin{proof} (i) Let $Q$ be any projection from $\ell_1(E)$ onto   $Z(G)$. For all  $x\in \operatorname{ker}(Q)$ and $y \in Z(G)$, we have
$$ \|x\|_1 = \|(I - Q)(x+y)\|_1 \le \|I -Q\|_1 \|x+y\|_1.$$ Hence, identifying $\tc(G)$ with $\ell_1(E)/Z(G)$, we get
$$ \|x + Z(G)\|_{\tc} \le \|x\|_1  \le \|I -Q\|_1 \|x+Z(G)\|_{\ell_1(E)/Z(G)} =\|I-Q\|_1 \|x + Z(G)\|_{\tc}.$$
Setting $Q = P_{\min}$, the linear mapping $T \colon \ell_1(E)/Z(G) \rightarrow  \operatorname{ker}(Q)$ given by $x + Z(G) \mapsto x$ yields
(i).

It was proved in \cite{NS} that $c_1((\mathcal{P}(G),d_{W_1}))=
c_1(\tc(G))$,  so (i) and  (ii) are equivalent.

 (iii) Note that $\operatorname{Lip}_0(G)=\tc(G)^*$ is isometrically isomorphic to $(B(G), \|\cdot\|_\infty) \subset l_\infty(E)$ by
$\tc(G)=\ell_1(E)/Z(G)$,  since $B(G) = Z(G)^\perp$. Now suppose
$Q$ is any projection from  $\ell_\infty(E)$ onto $B(G)$. Then $I-
Q^*$ is a projection from $\ell_1(E)$ onto $Z(G)$. So
$$\|Q\|_\infty \ge \|I - Q\|_\infty - 1 = \|I-Q^*\|_1-1 \ge \|P_{\text{min}}\|_1-1, $$ and hence
$$\lambda(\operatorname{Lip}_0(G))  = \inf \|Q\|_\infty \ge  \|P_{\text{min}}\|_1-1.$$

(iv)  By duality, \[ d_{\BM}(\tc(G), \ell_1^N)  =
d_{\BM}(\lip_0(G), \ell_\infty^N) \ge \lambda(\lip_0(G))
\ge\|P_{\min}\|_1 - 1. \qedhere\]
\end{proof}

\section{Invariant and minimal projections onto cycle spaces for different families
of graphs}

\subsection{Summary of results}

This article is a continuation of \cite{DKO} and \cite{DKO2} in
which invariant projections for the diamond, Laakso, and somewhat
more general recursive families of graphs were investigated, with
applications to the corresponding transportation cost spaces.

At this point, the established relation between the uniqueness of
invariant projections and properties of representations of
automorphism groups (Theorem \ref{T:InvProjEqSub} and Corollary
\ref{C:InvProjEqSub}) does not play a significant role in this
study. Increase of this role requires the development of the
representation theory of $\au(G)$ in the corresponding direction
(we have not found suitable results in the literature).

We have only one example of such application (see Section
\ref{S:GRR}). On the other hand, in view of Theorem
\ref{T:InvProjEqSub}, our results on invariant projections
immediately imply the corresponding consequences about
representations of $\au(G)$ for $G$ in question.

In Section \ref{S:CayBasCyc} we review definitions of Cayley
graphs, generators and relators of groups, and introduce a useful
system of cycles in Cayley graphs.

Section \ref{S:GRR} establishes non-uniqueness of invariant
projections for Cayley graphs of a group $\Gamma$ for which there
are no automorphisms except multiplications by elements of
$\Gamma$.

In Section~\ref{sec: torus}  we characterize invariant projections
on the $2$-dimensional torus $\mathbb{T}_n:=\mathbb{Z}_n\times
\mathbb{Z}_n$ and compute $\|P_{\min}\|_1$ for $n\le 6$. In
Section~\ref{sec: higherdimensions}  we extend the uniqueness
results of the previous section  to tori in higher dimensions and
to Hamming graphs. In the final section  we show that  the
orthogonal projection $P_{\rm{orth}}$  is minimal and that
$\|I-P_{\rm{orth}}\|_1 = \dfrac{n+1}{2}$ for the Hamming cube
$\mathbb{Z}_2^n$.

\subsection{Remarks on Cayley graphs}\label{S:CayBasCyc} The graphs which we consider are the Cayley graphs of finite groups.  Let us recall the definition. Relevant material of Group Theory
can be found in the introductory chapters of \cite{Rob96}, on
Cayley graphs - in \cite[Chapter 3]{GR01}.
\medskip

Let $\gr$ be a finite group with the identity element $1$ or
$1_\gr$ if the binary operation is multiplication, and $0$ or
$0_\gr$ if the binary operation is addition.

\begin{note}\label{N:MultAdd} We shall use multiplicative notation for general
groups. On the other hand, for $\mathbb{Z}_n$, considered as a
group with respect to addition, we use additive notation (to avoid
the confusion with the multiplication in the ring $\mathbb{Z}_n$).
\end{note}

We say that a subset $\x=\{x_i\}_{i=1}^k$ {\it generates} $\gr$ or
is a {\it generating set} for $\gr$ if every element of $\gr$ can
be written as a (finite) product of $\{x_i\}_{i=1}^k\bigcup
\{x_i^{-1}\}_{i=1}^k$. Such product is also called a {\it word}
formed from $\x$ and $\x^{-1}:=\{x^{-1}:~x\in\x\}$.
\medskip

A finite group $\gr$ with finite generating set $\x$ can be
described in terms of {\it relations} between elements of
$\x\cup\x^{-1}$ or {\it relators}. The set of all relators is
denoted $\rel$.

\begin{example} \label{ex: cayley} Group $\mathbb{Z}^2_n$, in additive notation,
has generating set $\{x,y\}$ and

\begin{enumerate}[\bf (1)]

\item relations $x+y=y+x$, $\underbrace{x+\dots+x}_{n~{\rm
times}}=0$, $\underbrace{y+\dots+y}_{n~{\rm times}}=0$

\item or relators $-x-y+x+y$, $\underbrace{x+\dots+x}_{n~{\rm
times}}$, $\underbrace{y+\dots+y}_{n~{\rm times}}$.
\end{enumerate}
\end{example}

So {\it relations} are equalities between words and {\it relators}
are words which have to be equal to the group identity element.

Let $\x$ be a generating subset of a group $\gr$ satisfying
$\x^{-1}=\x$. The \emph{right-invariant Cayley graph}
$\cay(\gr,\x)$ of $\gr$ corresponding to the generating set $\x$
is the graph whose vertices are elements of $\gr$ and elements
$g,h\in \gr$ are connected by an edge if and only if $hg^{-1}\in
\x$. The graph distance of this graph is called the \emph{word
metric} because the distance between group elements $g$ and $h$ is
the shortest representation of $hg^{-1}$ as a word in terms of
elements of $\x$.

We use the term {\it right-invariant} because multiplication of
all elements of $\gr$ on the right with a fixed element of $\gr$
is an isomorphism of the graph.

Similarly, one can define a \emph{left-invariant Cayley graph}. It
is different, but has similar properties, see \cite[Remark
5.10]{O}.

The following proposition will play an important role in our
results.

\begin{proposition}\label{P:GenCase}  The set of all signed indicator functions of directed
cycles of the form
\[y,g_ny,g_{n-1}g_ny,\dots,g_2\dots g_ny, g_1\dots
g_ny=y,\] where $y\in\gr$ and $r=g_1\dots g_n\in \rel$,
$g_1,\dots,g_n\in \x$,  spans the cycle space of $\cay(\gr,\x)$.
\end{proposition}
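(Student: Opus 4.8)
The plan is to establish the two directions of the spanning claim separately: that each displayed signed indicator function belongs to the cycle space, and that together they exhaust it. For the membership, I would observe that the displayed sequence is a closed walk: consecutive vertices differ by left multiplication by some $g_i\in\x$, hence are adjacent in $\cay(\gr,\x)$, and the walk closes up because $g_1\cdots g_n=r=1_\gr$. Its signed indicator function is the flow recording the signed number of traversals of each edge, and by \eqref{E:DF} every closed walk contributes zero net flow at each vertex; thus it lies in $\ker D=Z(\cay(\gr,\x))$. (When a relator is not cyclically reduced the walk need not be a simple cycle, so throughout I read ``signed indicator function'' as this associated integer flow, which is exactly what the formula produces.)

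For the spanning direction I would set up a dictionary between words and walks. Given a word $w=s_1\cdots s_m$ with $s_i\in\x$ and a base vertex $y$, let $W(w,y)$ denote the walk $y,\,s_my,\,s_{m-1}s_my,\dots,s_1\cdots s_my=wy$ and let $\chi_{W(w,y)}$ be its flow. Two elementary identities drive the argument. \emph{Concatenation}: $W(w_1w_2,y)$ first traverses $W(w_2,y)$ and then $W(w_1,w_2y)$, so $\chi_{W(w_1w_2,y)}=\chi_{W(w_2,y)}+\chi_{W(w_1,w_2y)}$. \emph{Reversal}: $W(w^{-1},wy)$ retraces $W(w,y)$ backwards, so $\chi_{W(w^{-1},wy)}=-\chi_{W(w,y)}$. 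Combining these for a conjugate $uru^{-1}$ of a relator $r\in\rel$, and using $ru^{-1}y=u^{-1}y$ since $r=1_\gr$, the two contributions involving $u$ cancel and leave the crucial identity
\[
\chi_{W(uru^{-1},y)}=\chi_{W(r,u^{-1}y)}.
\]
In words, the flow of any conjugate of a relator is itself the signed indicator function of a relator cycle from the stated family, merely based at the shifted vertex $u^{-1}y$.

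The endgame then invokes the presentation. Because $\rel$ is a defining set of relators for $\gr=\langle\x\mid\rel\rangle$, the kernel of the natural surjection of the free group on $\x$ onto $\gr$ is the normal closure of $\rel$; hence every word $w$ evaluating to $1_\gr$ can be written in the free group as a product $\prod_j u_jr_j^{\epsilon_j}u_j^{-1}$ of conjugates of relators and their inverses. Applying the concatenation identity repeatedly, and then the displayed conjugation identity (together with reversal to absorb the exponents $\epsilon_j=-1$), expresses $\chi_{W(w,y)}$ as an integer combination of signed indicator functions of relator cycles from the family. Finally I would fix a spanning tree of $\cay(\gr,\x)$: its fundamental cycles span $Z(\cay(\gr,\x))$, and each is $W(w,y)$ for some word $w$ with $w=1_\gr$, since any closed walk reads off a sequence of generators whose product is the identity. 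Chaining these facts shows the relator cycles span the cycle space.

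I expect the main obstacle to be the careful bookkeeping of base points in the concatenation and reversal identities, so that the conjugation identity lands exactly inside the stated family (a relator cycle based at a genuine group element $u^{-1}y$) rather than at some translate outside it, together with the clean invocation of the normal-closure characterization of the kernel. The secondary point to handle explicitly is that relator walks may fail to be simple cycles, which forces the reading of ``signed indicator function'' as the associated flow noted above.
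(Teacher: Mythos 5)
Your proposal is correct and follows essentially the same route as the paper: both arguments rest on the fact that words representing the identity lie in the normal closure of $\rel$, so that the flow of any closed walk decomposes into flows of conjugated relator walks, with the conjugating segments cancelling. Your write-up merely makes explicit the concatenation, reversal, and conjugation identities (and the reduction to fundamental cycles via a spanning tree) that the paper's proof compresses into the phrase ``because of cancellation of terms corresponding to $w_i^{\pm1}$.''
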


\begin{proof} It is easy to
see that if we describe a group $\gr$ in terms of generators $\x$
and relators $\rel$, then the set of all words which are equal to
$1$ is $\gr$ can be described as so-called \emph{normal closure}
of $\rel$, that is, the set of all finite products of the form
$w^{-1}r^{\pm 1}w$, where $w$ is any word and $r\in\rel$. In fact,
the images of all other words in the quotient of the free group
generated by $\x$ over the normal closure are not equal to the
identity.

Therefore, any word which represents a closed walk is obtained by
reducing the word of the form:

\begin{equation}\label{E:ClWalk} w_1^{-1}r_1^{\pm 1}w_1w_2^{-1}r_2^{\pm 1}w_2\dots
w_p^{-1}r_p^{\pm 1}w_p;\end{equation} {\it reducing} means
deleting products of the forms $x_ix_i^{-1}$ (see \cite[Sections
2.1-2.2]{Rob96} if you never studied this).

This implies that each cycle in $\cay(\gr,\x)$ is represented by
the same vector on the edge set $E(\cay(\gr,\x))$ as the closed
walk \eqref{E:ClWalk}, and this walk, because of cancellation of
terms corresponding to $w_i^{\pm1}$ is represented by  a linear
combination of images in $\ell_1(E(\cay(\gr,\x)))$ of cycles
corresponding to $r_i$ (note that the cycle corresponding to
$r_i^{-1}$ is just the negation of the cycle corresponding to
$r_i$).
\end{proof}

\subsection{Groups with graphical regular
representation}\label{S:GRR}

Godsil \cite{God81} proved the following result: Each non-solvable
finite group (see \cite[Chapter 5]{Rob96} for the definition and
basic facts on solvable groups) has a  subset $\x =\x^{-1}$ such
that the only automorphisms of the Cayley graph
$G:=\cay(\Gamma,\x)$ are shifts by elements of $\Gamma$, so
$\au(G)=\Gamma$.

\begin{theorem} For this $G$ there is no uniqueness of invariant projection
onto $Z(G)$.
\end{theorem}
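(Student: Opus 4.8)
The plan is to apply Corollary \ref{C:InvProjEqSub}. By Godsil's theorem the only automorphisms of $G=\cay(\Gamma,\x)$ are the translations, so $\au(G)=\Gamma$ acts \emph{regularly} (simply transitively) on the vertex set $V=\Gamma$. Since $G$ is a connected vertex-transitive graph of degree $|\x|\ge 3$ (a cycle, the only degree-$2$ connected vertex-transitive graph, can never be a graphical regular representation, as its automorphism group has order $2|V|$), it is $3$-connected, and hence $\cy(G)=\au(G)=\Gamma$ by Whitney's theorem. Thus Corollary \ref{C:InvProjEqSub} applies, and it suffices to produce a single non-trivial irreducible representation of $\Gamma$ that occurs in both $Z(G)$ and $B(G)$: this yields equivalent non-zero sub-restrictions and therefore non-uniqueness.

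First I would determine the $\Gamma$-module structure of $B(G)$. Because $\Gamma$ acts regularly on $V$, the natural representation on $\rv$ is precisely the regular representation of $\Gamma$. The coboundary operator $D^T\colon\rv\to\red$ is $\Gamma$-equivariant with $\ker D^T=\mathbb{R}\1$ (the constants), so $B(G)=\operatorname{im}D^T\cong\rv\ominus\mathbb{R}\1$ is the regular representation with one copy of the trivial representation deleted. The decisive fact is that over $\mathbb{R}$ the regular representation contains \emph{every} irreducible representation with positive multiplicity; consequently $B(G)$ contains a copy of every \emph{non-trivial} irreducible representation of $\Gamma$. Therefore it is enough to exhibit any non-trivial irreducible constituent of $Z(G)$.

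Next I would locate such a constituent by a dimension count. Suppose, for contradiction, that $\Gamma$ acts trivially on $Z(G)$, so that $Z(G)$ consists entirely of $\Gamma$-invariant edge functions. Passing to the space of functions on directed edges—on which $\Gamma$ acts by genuine permutations commuting with the orientation-reversing involution whose $(-1)$-eigenspace is $\red$—one sees that $\Gamma$ permutes the directed edges with exactly one orbit for each generator $x\in\x$, namely the colour class $\{(g,xg):g\in\Gamma\}$, on which $\Gamma$ acts regularly. Hence the space of $\Gamma$-invariants in $\red$ has dimension at most $|\x|$. But $\dim Z(G)=|E|-|V|+1=\tfrac12|\x|\,|\Gamma|-|\Gamma|+1$, which exceeds $|\x|$ since $|\Gamma|\ge 60$ (a non-solvable group has order at least that of $A_5$). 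This contradiction shows that $\Gamma$ acts non-trivially on $Z(G)$, so $Z(G)$ has a non-trivial irreducible constituent $W$. By the previous paragraph $W$ also occurs in $B(G)$, and Corollary \ref{C:InvProjEqSub} then gives the non-uniqueness of the invariant projection onto $Z(G)$.

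The heart of the argument—and the only place the hypothesis $\au(G)=\Gamma$ enters—is the identification of $\rv$ with the regular representation and of each edge colour class with a regular $\Gamma$-set; everything else is a dimension count. The one technical nuisance I expect is the orientation bookkeeping on $\red$: graph automorphisms may reverse edges relative to the reference orientation, so the edge module is a signed rather than a plain permutation module, and this must be handled by working on directed edges together with the reversal involution as above. If one prefers to avoid invoking $3$-connectivity altogether, the same conclusion follows by applying the argument in the proof of Theorem \ref{T:InvProjEqSub} verbatim to the group $\Gamma\subseteq\cy(G)$, since that proof uses only that $\Gamma$ is a finite group of isometries of $\ell_2(E)$ leaving $Z(G)$ invariant.
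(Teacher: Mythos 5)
Your proposal is correct and follows the same overall strategy as the paper: show that $B(G)$ contains every non-trivial irreducible representation of $\Gamma$, show that $Z(G)$ contains at least one non-trivial irreducible constituent, and conclude via Theorem \ref{T:InvProjEqSub}. For the first half the two arguments are essentially equivalent: the paper computes the character of the action on the standard cut basis $\{x_g\}_{g\ne 1_\Gamma}$ and recognizes it as (regular)$-$(trivial), while you obtain the same decomposition from the $\Gamma$-equivariance of $D^T$ and the identification $B(G)=\operatorname{im}D^T\cong\rv\ominus\mathbb{R}\1$. For the second half your route is genuinely different and, frankly, tighter: the paper's Lemma \ref{L:RepCycle} argues informally that some cycle fails to be fixed by some automorphism (``there are many reasons for which our situation is different''), whereas your dimension count --- the $\Gamma$-fixed subspace of the signed edge module has dimension at most the number of edge orbits, $|\x|$, while $\dim Z(G)=\tfrac12|\x||\Gamma|-|\Gamma|+1>|\x|$ --- is a complete, quantitative proof and is a clear improvement in rigor. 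Two small caveats: your claim that a connected vertex-transitive graph of degree at least $3$ is $3$-connected is true but is a non-trivial theorem of Watkins and Mader and should be cited if you invoke Corollary \ref{C:InvProjEqSub}; your closing observation that one can instead run the proof of Theorem \ref{T:InvProjEqSub} directly for the subgroup $\Gamma\subseteq\cy(G)$ neatly sidesteps this (and since $-I$ commutes with every intertwiner, passing from $\Gamma$-invariance to $\cy(G)$-invariance causes no trouble). Also, the lower bound $|\Gamma|\ge 60$ is far more than you need; $|\Gamma|\ge 4$ already suffices for the inequality.
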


The proof of this theorem is based on two lemmas.

\begin{lemma}\label{L:RepCut} The representation of $\Gamma=\au(G)$
on the cut space $B(G)$ contains all irreducible representation of
$\Gamma$ except the unit representation.
\end{lemma}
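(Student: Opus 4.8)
The plan is to identify the $\Gamma$-representation $B(G)$ with a quotient of the regular representation of $\Gamma$ and then invoke the standard fact that the regular representation contains every irreducible constituent. First I would use that $B(G)=Z(G)^\perp=(\ker D)^\perp=\operatorname{im}(D^T)$ is the row space of the incidence matrix, so that $D^T\colon\rv\to\red$ maps $\rv$ onto $B(G)$ with $\ker D^T$ equal to the constant functions on $V$ (this is exactly the computation of $\ker D^T$ recorded in Section~\ref{S:CycCut}). Since $\au(G)=\Gamma$, the group acts simply transitively on $V=\Gamma$, so the permutation action of $\Gamma$ on $\rv=\mathbb{R}^V$ is precisely the (real) regular representation of $\Gamma$.

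The crux is that $D^T$ intertwines this permutation action on $\rv$ with the signed-permutation (orthogonal) action of $\Gamma$ on $\red$ under which $B(G)$ is invariant. I would check this directly from $(D^Tf)(e)=f(e^+)-f(e^-)$: for $\sigma\in\Gamma$ the edge $\sigma^{-1}(e)$ joins $\sigma^{-1}(e^+)$ and $\sigma^{-1}(e^-)$, and the sign $\pm1$ by which $\sigma$ acts on the coordinate $e$ is $+1$ or $-1$ according as $\sigma$ preserves or reverses the reference orientation of that edge; a short case analysis then yields $D^T\pi(\sigma)=\rho(\sigma)D^T$, where $\pi$ and $\rho$ denote the actions on $\rv$ and $\red$. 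This orientation bookkeeping is the one genuinely technical point, and I expect it to be the main (though routine) obstacle; everything else is formal.

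To conclude, $D^T$ is a surjective $\Gamma$-homomorphism from the regular representation onto $B(G)$ whose kernel is the trivial subrepresentation $\langle\text{constants}\rangle$, so $B(G)\cong\rv/\langle\text{constants}\rangle$ as $\Gamma$-modules. By Maschke's theorem the constants admit a $\Gamma$-invariant complement $W\subseteq\rv$, and $B(G)\cong W$. Since the (real) regular representation contains every irreducible representation of $\Gamma$ as a constituent, and passing from $\rv$ to $W$ removes only one copy of the trivial (unit) representation, every nontrivial irreducible survives in $W$ with unchanged multiplicity. Hence $B(G)$ contains every irreducible representation of $\Gamma$ except the unit representation, as claimed. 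In fact, since the trivial representation occurs in the regular representation with multiplicity one, it does not occur in $B(G)$ at all.
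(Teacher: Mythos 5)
Your proof is correct, but it reaches the conclusion by a different route than the paper. The paper works with the standard cut basis $\{x_g\}_{g\ne 1_\Gamma}$ of $B(G)$ (the vertex cuts introduced at the end of Section~\ref{S:CycCut}), computes the character of the $\Gamma$-action in that basis --- value $n-1$ at $1_\Gamma$ and $-1$ elsewhere --- and recognizes it as $\chi_{\mathrm{reg}}-\chi_{\mathrm{triv}}$, whence the claim follows from character theory. You instead realize $B(G)$ as $\operatorname{im}(D^T)$ and exhibit $D^T$ as a surjective $\Gamma$-intertwiner from $\rv$ (which is the regular representation, since by Godsil's theorem $\au(G)=\Gamma$ acts simply transitively on $V$) with kernel the constants, giving the module isomorphism $B(G)\cong \mathbb{R}^\Gamma/\langle\text{constants}\rangle$ directly; your orientation check for the intertwining relation $D^T\pi(\sigma)=\rho(\sigma)D^T$ is the right computation and does go through. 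Both arguments establish the same structural fact, that $B(G)$ is the regular representation with one copy of the trivial representation removed. The paper's character count is shorter and avoids the orientation bookkeeping entirely (the cuts $x_g$ are permuted by $\Gamma$ up to the single linear relation $\sum_g x_g=0$, so the trace is immediate); your version is basis-free, yields the slightly stronger explicit isomorphism of $\Gamma$-modules rather than only an equality of characters, and makes transparent the bonus observation you note at the end, that the trivial representation does not occur in $B(G)$ at all.
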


\begin{proof} In fact, consider the natural basis $\{x_g\}_{g\ne 1_\Gamma}$ in
$B(G)$ introduced at the end of Section \ref{S:CycCut}. If we
write the matrices of the representation of $\Gamma$ in this
basis, we get that the character has value $n-1$ (where
$n=|\Gamma|$) on $1_\Gamma$ and value $-1$ on all other elements.
So it is the difference of the characters of the regular
representation and the unit representation. Therefore (by general
theory, see \cite{Ser77,Sim96}) the representation of $\Gamma$ on
$B(G)$ contains all irreducible representations except the unit
representation.
\end{proof}

\begin{lemma}\label{L:RepCycle} The representation of $\Gamma=\au(G)$
on the cycle space $Z(G)$ contains irreducible representation of
$\Gamma$ which are not equivalent to the unit representation.
\end{lemma}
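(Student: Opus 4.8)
The plan is to prove the lemma's assertion — that $Z(G)$ contains an irreducible constituent not equivalent to the unit representation — which, together with Lemma \ref{L:RepCut} and Theorem \ref{T:InvProjEqSub}, yields the non-uniqueness claimed in the theorem (the common non-unit constituent then sits inside both $Z(G)$ and $B(G)$). Since a representation fails to have a non-unit constituent precisely when $\Gamma$ acts as the identity on it, I reduce the lemma to the strict inequality $\dim Z(G)>\dim Z(G)^{\Gamma}$, where $Z(G)^{\Gamma}$ is the subspace of $\Gamma$-fixed vectors. Writing $\ell_2(E)=Z(G)\oplus B(G)$ as orthogonal $\Gamma$-representations and invoking Lemma \ref{L:RepCut} (which identifies $B(G)$ as the regular minus the unit representation, so $B(G)^{\Gamma}=\{0\}$), I get $Z(G)^{\Gamma}=\ell_2(E)^{\Gamma}$; thus it suffices to bound $\dim \ell_2(E)^{\Gamma}$ from above and compare with $\dim Z(G)$.

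To compute $\ell_2(E)^{\Gamma}$ I first put the edge representation in sign-free form. After fixing the reference orientation, I identify $\red$ with the space of functions $\widetilde F$ on directed edges that are antisymmetric under reversal, $\widetilde F(\bar e)=-\widetilde F(e)$; on this model the signed-permutation action of $\au(G)=\Gamma$ becomes the honest permutation action induced by the shifts on directed edges. A directed edge of $\cay(\Gamma,\x)$ is a pair $(g,x)$ with $g\in\Gamma$, $x\in\x$ (the arc from $g$ to $xg$), its reversal is $(xg,x^{-1})$, and a shift sends $(g,x)\mapsto(ga,x)$. Hence the shifts act transitively on the directed edges carrying a fixed label $x$, so the orbits of directed edges are indexed by the generators $x\in\x$. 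A $\Gamma$-invariant $\widetilde F$ is therefore constant on each label, $\widetilde F(g,x)=c_x$, and antisymmetry forces $c_{x^{-1}}=-c_x$ (so $c_x=0$ when $x=x^{-1}$). Consequently $\dim \ell_2(E)^{\Gamma}$ equals the number of inverse-pairs $\{x,x^{-1}\}$ with $x\ne x^{-1}$, and in particular $\dim \ell_2(E)^{\Gamma}\le |\x|/2$.

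Finally I compare dimensions. Since $G=\cay(\Gamma,\x)$ is $|\x|$-regular on $n:=|\Gamma|$ vertices, $|E|=n|\x|/2$ and
\[
\dim Z(G)=|E|-|V|+1=\frac{n|\x|}{2}-n+1 .
\]
The desired inequality $\dim Z(G)>\dim \ell_2(E)^{\Gamma}$ then follows from $\dim Z(G)>|\x|/2$, which rearranges to $(n-1)(|\x|-2)>0$. This is exactly where the hypothesis enters: a finite group generated by a symmetric set of size at most $2$ is cyclic or dihedral, hence solvable, so the non-solvability of $\Gamma$ forces $|\x|\ge 3$; together with $n\ge 2$ this gives $(n-1)(|\x|-2)>0$. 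Therefore $Z(G)^{\Gamma}$ is a proper subspace of $Z(G)$, so $Z(G)$ has a vector not fixed by $\Gamma$ and hence a non-unit irreducible constituent.

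The step I expect to be the main obstacle is the correct bookkeeping of orientation signs in the edge representation: because $\au(G)$ acts on $\red$ by signed permutations, one must pass to the antisymmetric-function model to convert this into a genuine permutation action before counting orbits, and it is precisely the antisymmetry constraint $c_{x^{-1}}=-c_x$ that pins down $\dim \ell_2(E)^{\Gamma}$. The only other point that uses the hypothesis is the reduction $|\x|\ge 3$, which is exactly the role played by the non-solvability of $\Gamma$.
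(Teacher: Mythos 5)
Your proof is correct, but it takes a genuinely different route from the paper's. The paper argues by exhibiting a single moved vector: it reduces the lemma to finding a cycle that is not mapped onto itself by some shift, and then asserts (rather tersely) that only a cyclic group acting on its full Hamiltonian cycle could have every shift preserve a cycle, which cannot happen here. You instead make a global dimension count: you identify $\ell_2(E)$ with antisymmetric functions on arcs, observe that the arcs of $\cay(\Gamma,\x)$ form $|\x|$ free orbits labelled by the generators under the (simply transitive) shift action, deduce $\dim \ell_2(E)^{\Gamma}\le |\x|/2$ from the constraint $c_{x^{-1}}=-c_x$, and compare with $\dim Z(G)=n|\x|/2-n+1$, the inequality $(n-1)(|\x|-2)>0$ being exactly where non-solvability enters (via $|\x|\ge 3$, since a symmetric generating set of size at most $2$ yields a cyclic or dihedral, hence solvable, group). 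Your version is more quantitative and arguably more complete than the paper's sketch: it pins down the exact multiplicity of the unit representation in $\ell_2(E)$ (the number of non-involutive generator pairs) rather than just producing one non-fixed cycle, and it fills in a step the paper leaves to the reader. One small remark: the appeal to Lemma \ref{L:RepCut} to get $Z(G)^{\Gamma}=\ell_2(E)^{\Gamma}$ is not needed for the lemma itself, since the inclusion $Z(G)^{\Gamma}\subseteq \ell_2(E)^{\Gamma}$ already suffices for your dimension comparison.
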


\begin{proof} It suffices to prove that there is a
vector $x\in Z(G)$ which is mapped by one of $g\in\Gamma$ onto a
vector which is linearly independent with $x$.

So we need to exhibit a cycle which is not mapped onto itself by
some element of the group. To see the existence of such cycle,
observe that only the cycle joining elements of a cyclic group in
a cyclic order is mapped onto itself by all elements of the group.
There are many reasons for which our situation is different.
\end{proof}

\subsection{Two-dimensional torus}
\label{sec: torus}

\subsubsection{Notation}

  Let  $n \ge 3$. The torus $\mathbb{T}_n = \mathbb{Z}_n \times \mathbb{Z}_n$ viewed as a graph  consists of vertices $(i,j)$ with $0 \le i,j < n$,  and edge set $E_n$ consisting of    edges
 oriented as follows :  $(i,j) \rightarrow (i+1,j)$ (horizontal left to right) and $(i,j) \rightarrow (i,j+1)$ (vertical bottom to top) with addition mod $n$. The corresponding ``row'' and  ``column''  edge vectors which form a basis of  the edge space  are denoted $e^R_{i,j}$ and $e^C_{i,j}$.
Let $\ell_1(E_n)$, $\ell_2(E_n)$, and $\ell_\infty(E_n)$ denote
the edge space  equipped with the usual $\ell_1$, $\ell_2$, and
$\ell_\infty$ norms with the edge vectors as the standard  unit
vector  basis. Let $Z_n=Z(\mathbb{T}_n)$ and $B_n=B(\mathbb{T}_n)$
denote the cycle and cut spaces.

Let $\Gamma_n= \Gamma(\mathbb{T}_n)$ be the group of linear isometries of $\ell_1(E_n)$  for which $Z_n$ is invariant.

Let $H_n = H(\mathbb{T}_n)$ be the automorphism group of $\mathbb{T}_n$. As mentioned above,  each $g \in H_n$ induces a
$\hat{g} \in \Gamma_n$. Let $U_n$ be the subgroup of $H_n$ generated by $\alpha$ and $\beta$, defined as follows:
$$ \alpha((i,j)) = (-i, j);\qquad \beta((i,j)) = (j,i).$$
Let $V_n$ be the subgroup of $H_n$ consisting of translations $T_{(a,b)}$  ($(a,b) \in \mathbb{T}_n$) defined by
$$T_{(a,b)}(i,j) = (a+i, b+j).$$

 A projection $P$ from $\ell_1(E_n)$ onto
 $Z_n$ or $B_n$  is \textit{$\Gamma_n$-invariant} if
$Pg = gP$ for all  $g \in \Gamma_n$.

 Corresponding to each $(i,j) \in \mathbb{T}^n$ there are
 a  ``small square''
cycle vector
$$ S((i,j)) :=  e^R_{i,j}  + e^C_{i+1,j} - e^R_{i,j+1} - e^C_{i,j}$$ and a  ``cross'' cut vector corresponding to the cut $(\{(i,j)\}, \mathbb{T}^n \setminus \{(i,j)\})$ $$ X((i,j)) :=  e^R_{i,j}   +e^C_{i,j} -  e^C_{i-1,j} - e^R_{i,j-1} .$$
We also introduce  ``row'' and  ``column'' cycle vectors
$$ R := \sum_{j=0}^{n-1} e^R_{j,0} \quad\text{and}\quad K:= \sum_{j=0}^{n-1} e^C_{0,j}.$$

\subsubsection{Invariant projections onto $Z_n$}
\begin{lemma} \label{lem: bases}   $$\ba_n^1 =\{ S((i,j)) \colon (i,j) \ne
(0,0)\} \cup \{R,K\}$$ and
$$\ba_n^2=\{ X(
(i,j))\colon (i,j) \ne (0,0)\}$$  are algebraic bases for $Z_n$
and $B_n$ respectively. \end{lemma}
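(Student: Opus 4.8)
The plan is to verify that each of the two stated families has the correct cardinality and is linearly independent, so that they are bases by a dimension count. Recall from the general theory developed in Section~\ref{S:CycCut} that for a connected graph $G$ one has $\dim Z(G)=|E|-|V|+1$ and $\dim B(G)=|V|-1$. For $\mathbb{T}_n$ we have $|V|=n^2$ and $|E|=2n^2$ (each vertex is the tail of exactly one row edge and one column edge), so $\dim Z_n = 2n^2-n^2+1 = n^2+1$ and $\dim B_n = n^2-1$. Counting the proposed sets, $\ba_n^1$ has $(n^2-1)$ small-square vectors together with the two extra vectors $R$ and $K$, giving $n^2+1$ elements, and $\ba_n^2$ has $n^2-1$ cross vectors. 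Thus in each case the cardinality matches the dimension, and it suffices to prove linear independence.

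First I would handle $\ba_n^2$, the cut space. Each $X((i,j))$ is the signed indicator of the cut separating the single vertex $(i,j)$ from the rest, and these are exactly the standard star-cut vectors described at the end of Section~\ref{S:CycCut} (there the basis consists of the cuts around all vertices but one). The sum $\sum_{(i,j)} X((i,j))$ is zero because every edge is counted once with each sign, so any $n^2-1$ of them are independent; in particular omitting $(0,0)$ yields a basis. I would make this precise by observing that the coefficient of the row edge $e^R_{i,j}$ in the expansion of an arbitrary linear combination $\sum c_{i,j} X((i,j))$ reads off $c_{i,j}-c_{i+1,j}$, and similarly the column edges give $c_{i,j}-c_{i,j+1}$; setting all such differences to zero forces all $c_{i,j}$ equal, and since $c_{0,0}$ is excluded (its coefficient is fixed to $0$) the common value must be $0$.

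For $\ba_n^1$, I would first check membership in $Z_n$: each $S((i,j))$ lies in $Z_n$ because it is the signed indicator of the oriented unit square, which is a cycle, and $R$, $K$ lie in $Z_n$ because they are signed indicators of the horizontal and vertical ``wrap-around'' cycles $\{(0,0)\to(1,0)\to\cdots\to(n-1,0)\to(0,0)\}$ and its vertical analogue. For linear independence I would argue as follows. Suppose $\sum_{(i,j)\neq(0,0)} a_{i,j} S((i,j)) + bR + cK = 0$. Reading off the coefficient of each individual edge gives a linear system. The small squares alone span the $(n^2-1)$-dimensional subspace of ``contractible'' cycles (those with zero net horizontal and vertical winding), while $R$ and $K$ have winding numbers $(1,0)$ and $(0,1)$ respectively; since every $S((i,j))$ has total winding $(0,0)$, the presence of $b$ or $c\neq 0$ would produce a nonzero net winding, forcing $b=c=0$, after which the independence of the small squares (again read off edge by edge, exactly as for the cut vectors by a discrete-potential argument) forces all $a_{i,j}=0$.

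The main obstacle is the clean verification that the small-square vectors $\{S((i,j))\colon (i,j)\neq(0,0)\}$ together span precisely the winding-zero cycles and are independent; the subtlety is the single global relation $\sum_{\text{all }(i,j)} S((i,j)) = 0$ (every edge is covered once positively and once negatively as one sweeps over all squares), which is exactly why deleting the square at $(0,0)$ leaves an independent set of the right size. I would formalize the independence via the ``discrete potential'' / winding-number bookkeeping sketched above rather than by inverting the full incidence system, since that makes transparent both the single relation and the role of the two extra generators $R$ and $K$ as representatives of the two homology classes of the torus.
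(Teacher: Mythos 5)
Your proof is correct and follows essentially the same route as the paper: both rest on the dimension count $\dim Z_n=n^2+1$, $\dim B_n=n^2-1$ combined with linear independence, which the paper simply declares to be clear. Your winding-number argument for $b=c=0$ and the discrete-potential (telescoping coefficient) argument for the $a_{i,j}$ and $c_{i,j}$ are valid fillings-in of that ``clearly,'' so there is nothing to object to.
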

\begin{proof} Clearly, $\ba_n^1$ is a linearly independent subset of $ Z_n$ and  $|\ba_n^1| = n^2 + 1 = \operatorname{dim}(Z_n)$.  Similarly, $\ba_n^2$ is a linearly independent subset of $B_n$ and $|\ba_n^2| = n^2 -1 = \operatorname{dim}(B_n)$. \end{proof}

\begin{lemma} \label{lem: induced} Let $n\ge 3$ and  $\theta \in \Gamma_n$. Then either $\theta$ or $-\theta$ is induced by a graph automorphism $\phi$.
\end{lemma}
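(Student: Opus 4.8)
The plan is to use the fact that every linear isometry of $\ell_1(E_n)$ is a signed permutation of the standard edge basis, so each $\theta \in \Gamma_n$ is determined by an underlying permutation $\pi$ of $E_n$ together with a choice of signs. I would prove the statement in two essentially independent steps and then combine them: \textbf{(B)} the permutation $\pi$ is induced by a graph automorphism $\phi \in H_n$, and \textbf{(A)} once the permutation part is fixed, the signs are rigid, i.e.\ the only diagonal $\pm1$ operators lying in $\Gamma_n$ are $\pm I$. Granting both, $\theta$ and $\hat\phi$ share the same underlying permutation $\pi$, so $D := \theta\hat\phi^{-1}$ fixes every edge vector up to sign; that is, $D$ is a diagonal $\pm1$ operator, and $D \in \Gamma_n$ because $\hat\phi \in \Gamma_n$. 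By (A) we get $D = \pm I$, hence $\theta = \pm\hat\phi$. When $D=I$ the map $\theta=\hat\phi$ is induced by $\phi$; when $D=-I$ the map $-\theta=\hat\phi$ is induced by $\phi$, which is exactly the dichotomy asserted.

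For step (B), I would argue that $\theta$, being a signed permutation preserving $Z_n$, preserves the family of support-minimal nonzero vectors of $Z_n$: if $v\in Z_n$ has inclusion-minimal support then, since $\operatorname{supp}(\theta v)=\pi(\operatorname{supp} v)$ and $\theta^{-1}$ also preserves $Z_n$, the vector $\theta v$ again has minimal support. These minimal supports are precisely the edge sets of the cycles of $\mathbb{T}_n$ (the circuits of the graphic matroid), so $\pi$ carries the edge set of every cycle bijectively onto the edge set of a cycle, i.e.\ $\pi$ is a cycle-preserving bijection of $E_n$. Since $\mathbb{T}_n = C_n \square C_n$ is $3$-connected for $n \ge 3$, Whitney's $2$-isomorphism theorem \cite{Whi33} guarantees that such a bijection is induced by a graph automorphism $\phi \in H_n$.

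For step (A), let $D$ be a diagonal $\pm1$ operator in $\Gamma_n$, say $De = \delta_e e$ with $\delta_e \in \{\pm1\}$. For each $(i,j)$ the operator $D$ leaves invariant the span of the four edges supporting the small square $S((i,j))$, and $DS((i,j)) \in Z_n$. Because the cycle space of a single $4$-cycle is one-dimensional, every element of $Z_n$ supported on those four edges is a scalar multiple of $S((i,j))$; comparing coefficients in $DS((i,j)) = \lambda_{(i,j)} S((i,j))$ forces all four signs $\delta_e$ attached to that square to equal the common value $\lambda_{(i,j)} \in \{\pm1\}$. Since for $n \ge 3$ each edge lies on exactly two small squares and consecutive squares overlap in a single shared edge, the constants $\lambda_{(i,j)}$ are forced to agree across adjacent squares; as the squares tile $\mathbb{T}_n$ in a connected pattern, a single global sign propagates, so $\delta_e$ is constant and $D = \pm I$.

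I expect the main obstacle to be step (A), the sign rigidity, since this is where the geometry of the torus genuinely enters: one needs that each edge lies on exactly two small squares, that these squares overlap connectedly, and that a $4$-cycle carries only a one-dimensional cycle space; one must also track signs carefully when passing from $\theta$ to $D=\theta\hat\phi^{-1}$. Step (B) is comparatively routine once circuit-preservation of $\pi$ and $3$-connectivity of $\mathbb{T}_n$ are recorded, so that Whitney's theorem applies uniformly for all $n \ge 3$ (including $n=3$, where $\mathbb{T}_3 = K_3 \square K_3$ additionally contains triangles but remains $3$-connected, and where a direct check shows the only $4$-cycles are still the small squares).
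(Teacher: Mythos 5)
Your proof is correct, but it takes a genuinely different route from the paper's. The paper argues directly with the cut space: since $\theta$ is a signed permutation it acts orthogonally on $\ell_2(E_n)$ and hence preserves $B_n$, so each cross vector $X((i,j))$ is sent to a four-term signed sum orthogonal to every small square, which forces $\theta(X((i,j)))=\varepsilon((i,j))X(\phi((i,j)))$ for some vertex bijection $\phi$; shared edges between crosses of neighboring vertices then show simultaneously that $\phi$ preserves adjacency and that $\varepsilon$ is locally constant, and connectedness finishes the argument. This is short, self-contained, and uses the same cross vectors that drive the later lemmas. You instead split the problem: you extract the unsigned permutation $\pi$, identify the circuits of $\mathbb{T}_n$ as the minimal supports in $Z_n$ (correct, and correctly transported through $\theta$ and $\theta^{-1}$), and invoke Whitney's $2$-isomorphism theorem for $3$-connected graphs to produce $\phi$ --- precisely the alternative the authors mention only in Remark~\ref{rem: Whitney} --- and then prove sign rigidity separately by noting that the cycle space supported on the four edges of a small square is one-dimensional (which holds even for $n=3$, since no proper subset of those four edges contains a circuit) and that the squares overlap connectedly. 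What your route buys is generality and modularity: step (B) works verbatim for any $3$-connected graph, and the sign-rigidity step isolates exactly what is needed from the torus geometry; what it costs is the appeal to Whitney's theorem, a substantially heavier input than the paper's elementary computation with cross vectors. Both arguments are sound, and your assembly of the two steps via $D=\theta\hat\phi^{-1}$ is carried out correctly.
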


\begin{proof} It is easy to see that each $\theta\in\Gamma_n$ induces an orthogonal operator on $\ell_2^(E)$. Therefore $\theta$ leaves the cut space $B_n$ invariant.
Therefore  each $\theta(X((i,j)))$ is a sum of four signed edge
vectors and is orthogonal to each $S((a,b))$. It is easily seen
that this implies that $\theta(X((i,j))) = \varepsilon((i,j))
X(\phi((i,j)))$, where $\varepsilon((i,j))= \pm 1$ and $\phi$ is a
bijection of the vertex set of $\mathbb{T}_n$. If $(i_1,j_1)$ is a
neighbor of $(i,j)$ then $\theta(X((i_1,j_1)))$ and $\theta(X((i,j)))$
share a signed edge vector. Hence $\phi((i_1,j_1))$ is a neighbor
of $\phi((i,j))$, i.e., $\phi$ is a  graph automorphism,
 and $\varepsilon((i_1,j_1))) = \varepsilon((i,j))$. By a connectedness argument,  $\varepsilon((i,j))$ is constant. If $\epsilon((i,j)) \equiv 1$ then $\theta=\hat\phi$
 and if $\varepsilon((i,j)) \equiv -1$ then $-\theta=\hat\phi$. \end{proof}

\begin{remark} \label{rem: Whitney} By a  classical result of Whitney \cite{Whi33} (see also
\cite[Section 5.3]{Oxl11}),  Lemma~\ref{lem: induced}  is valid for any $3$-connected graph.
 For graphs which are not $3$-connected there can
be a rich set of cycle-preserving bijections which do not
correspond to an automorphism of the graph. The cases of some of
such graphs (Diamond graphs, Laakso graphs, etc.) were studied in
\cite{DKO,DKO2}.
\end{remark}

\begin{lemma}  \label{lem: automorphism} Let $n \ge 5$. Then $H_n = U_n\cdot V_n = V_n \cdot U_n$. \end{lemma}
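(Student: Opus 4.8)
The plan is to reduce the lemma to the single assertion that the stabilizer $\mathrm{Stab}_{H_n}(0,0)$ of the origin equals $U_n$; the stated factorizations then follow formally. Indeed, $V_n\cong\mathbb{Z}_n\times\mathbb{Z}_n$ acts simply transitively on the vertex set $\mathbb{T}_n$, so by the orbit--stabilizer decomposition every $\phi\in H_n$ factors as $\phi=T_{\phi(0,0)}\circ\psi$ with $\psi:=T_{-\phi(0,0)}\circ\phi\in\mathrm{Stab}_{H_n}(0,0)$; granting $\mathrm{Stab}_{H_n}(0,0)=U_n$ this yields $H_n=V_n\cdot U_n$. Since each element of $U_n$ acts on $\mathbb{T}_n$ as a linear signed-coordinate-permutation, it normalizes the translation group, $u\,T_{(a,b)}\,u^{-1}=T_{u(a,b)}\in V_n$, so $V_n\cdot U_n=U_n\cdot V_n$ is a subgroup and both equalities of the lemma hold.

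As $U_n\subseteq\mathrm{Stab}_{H_n}(0,0)$ is clear, it remains to show that any $\phi\in H_n$ fixing $(0,0)$ lies in $U_n$. Such a $\phi$ permutes the four neighbours $\{(\pm1,0),(0,\pm1)\}$ of the origin. The key point, and the first place where $n\ge5$ enters, is that $\phi$ must respect the partition of these four directions into the antipodal pairs $\{(1,0),(-1,0)\}$ and $\{(0,1),(0,-1)\}$. This distinction is automorphism-invariant: for $n\ge5$ an antipodal pair has exactly one common neighbour (the origin), whereas two adjacent directions have exactly two common neighbours (the origin together with the diagonal corner, e.g.\ $(1,1)$ for $(1,0),(0,1)$). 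A permutation of the four directions preserving this pairing is precisely a signed permutation of the two axes, and the eight such maps are exactly the images of the eight elements of $U_n\cong D_4$ acting faithfully on the neighbours. Hence there is $u\in U_n$ with $u^{-1}\phi$ fixing each of the four neighbours; replacing $\phi$ by $\psi:=u^{-1}\phi$, it suffices to prove $\psi=\mathrm{id}$.

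To finish I would propagate the identity outward from the origin using the rigidity of the small squares. The second use of $n\ge5$ is that the only $4$-cycles of $\mathbb{T}_n$ are the small squares $S((i,j))$: a closed walk of length $4$ through distinct vertices must use two horizontal and two vertical steps forming a unit cell, since four steps of a single type close up (without repeating a vertex) only when $n\mid4$. Consequently $\psi$ permutes small squares. From $\psi$ fixing $(0,0),(1,0),(0,1)$ one gets $\psi(1,1)=(1,1)$, since $(1,1)$ is the unique common neighbour of $(1,0)$ and $(0,1)$ other than the origin; the same argument fixes $(1,-1),(-1,1),(-1,-1)$. The remaining vertices are then fixed by induction on the grid-distance $|i|+|j|$: a vertex off the axes is the fourth corner of a small square whose other three corners have smaller grid-distance and are already fixed, while a vertex on an axis is the unique unfixed neighbour of an already-fixed axis vertex three of whose neighbours are fixed. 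Since $\mathbb{T}_n$ is exhausted in this way, $\psi=\mathrm{id}$, whence $\phi=u\in U_n$.

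The main obstacle is establishing the two rigidity facts that force $n\ge5$: that antipodal and adjacent directions are separated by their common-neighbour count, and that the small squares are the only $4$-cycles. Both fail at $n=4$ (there $(1,0)$ and $(-1,0)=(3,0)$ acquire a second common neighbour $(2,0)=(-2,0)$, and a full row becomes a $4$-cycle), which is exactly why the hypothesis is needed; once these are in place, the bookkeeping in the final induction is routine.
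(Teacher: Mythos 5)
Your proposal is correct and follows essentially the same route as the paper: translate by an element of $V_n$ to fix the origin, use the rigidity of the local structure for $n\ge 5$ (your common-neighbour count for antipodal versus adjacent directions is exactly the mechanism behind the paper's ``collinear triples map to collinear triples'' step) to adjust by an element of $U_n$, and then propagate the identity outward via small squares. The only cosmetic difference is that you obtain both factorizations at once by noting that $U_n$ normalizes $V_n$, where the paper runs the argument twice.
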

\begin{proof}  Let $g \in H_n$.
Since $n \ge 5$, $g$ maps the vertices $\{(i,j), (i+1,j), (i+1,j+1), (i+1,j)\}$ of a `small square'  cycle onto the vertices of another small square. Hence $g$ maps collinear triples
$\{(i,j),(i+1,j), (i+2,j)\}$  and $\{(i,j), (i,j+1), (i,j+2)\}$ onto other collinear triples with `midpoints' mapped to midpoints.  There exists $v \in V_n$ such that $vg((0,0)) = (0,0)$.  Since $vg$ maps collinear triples onto collinear triples, it follows that there exists $u \in U_n$ such that $uvg$ fixes $(0,0)$, $(1,0)$ and $(0,1)$. Thus, $uvg$ also fixes $(1,1)$.
Using the observation about  collinear triples again, it follows easily that $uvg = I$, i.e., $g \in U_n \cdot V_n$.

Similarly, we prove $g \in V_n \cdot U_n$.
\end{proof}

\begin{remark} \label{rem: n=4} Lemma~\ref{lem: automorphism} is false for $n=4$. This is because there are automorphisms which do not map collinear triples onto collinear triples: consider $\phi: \mathbb{T}_4 \rightarrow \mathbb{T}_4$  given by  \begin{align*}
&(2,0) \mapsto (1,3), (3,0) \mapsto (0,3), (2,1) \mapsto (1,2), (3,1) \mapsto  (0,2)\\
&(0,2) \mapsto (3,1), (1,2) \mapsto(2,1), (0,3) \mapsto(3,0), (1,3) \mapsto(2,0),  \end{align*}
with all other vertices fixed by $\phi$.  Then $\phi$ is easily seen to be a graph automorphism which maps the ``small square" cycle
$((0,1), (1,1), (1,2), (0,2),(0,1))$ onto the ``row"  cycle  $((0,1), (1,1), (2,1), (3,1), (0,1))$.
\end{remark} We require the following simple observation.
\begin{lemma} \label{lem: crossfixed} For  $g \in H_n$,  $\hat{g}(X((i,j))) = X(g((i,j)))$. \end{lemma}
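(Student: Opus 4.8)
The plan is to identify the cross cut vector $X((i,j))$ with a coboundary and then exploit equivariance. From the formula $(D^T f)(e) = f(e^+) - f(e^-)$ one sees immediately that $D^T(\1_{(i,j)})$ is supported exactly on the four edges incident to $(i,j)$, with coefficient $+1$ on the two edges whose head is $(i,j)$ and $-1$ on the two edges whose tail is $(i,j)$. Comparing this with the definition of the star $X((i,j))$ (whose two outgoing edges $e^R_{i,j}, e^C_{i,j}$ carry sign $+1$) gives the identification $X((i,j)) = -D^T(\1_{(i,j)})$. This reduces the lemma to showing that $\hat g$ intertwines $D^T$ with the vertex permutation induced by $g$.

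Concretely, I would make the action of $\hat g$ explicit at the level of oriented edges: the automorphism $g$ sends the oriented edge $\overrightarrow{uv}$ to $\overrightarrow{g(u)g(v)}$, and $\hat g$ is the linear extension of this rule, the reference orientation of the image edge being used to read off a $\pm 1$ sign. With this description the key identity is the chain-map property
\[
\hat g \circ D^T = D^T \circ g_*, \qquad g_* \colon \rv \to \rv, \quad g_*(\1_v) = \1_{g(v)}.
\]
I would verify it by evaluating both sides on a single vertex indicator $\1_v$: both $\hat g(D^T \1_v)$ and $D^T(\1_{g(v)})$ are supported on the edges incident to $g(v)$, and on each such edge their coefficients agree, because $g$ carries the head/tail relation at $v$ to the head/tail relation at $g(v)$. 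The crucial point in this check is that the $\pm 1$ sign built into $\hat g$ is absorbed correctly: when $\hat g$ reverses the reference orientation of an image edge it simultaneously swaps that edge's head and tail, so the sign it introduces exactly matches the change of coefficient dictated by $D^T$, and no residual sign survives.

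Granting the intertwining, the lemma follows in one line:
\[
\hat g(X((i,j))) = -\hat g\, D^T(\1_{(i,j)}) = -D^T(g_* \1_{(i,j)}) = -D^T(\1_{g((i,j))}) = X(g((i,j))).
\]
I expect the only genuinely delicate point to be the sign bookkeeping in the definition of $\hat g$ described above — confirming that the isometry attached to $g$ really is the natural oriented-edge action rather than its negative, and that this action is compatible with the fixed reference orientation of $E_n$. Once this is pinned down there is no serious obstacle, and in fact a purely bare-hands alternative is available: $\hat g$ permutes the four edges incident to $(i,j)$ onto the four edges incident to $g((i,j))$ while preserving, up to the uniform sign inherent in $\hat g$, the outgoing/incoming pattern that defines $X$, which is the content of the claim.
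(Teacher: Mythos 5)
Your proof is correct. The paper states this lemma without any proof, presenting it as a ``simple observation,'' and your verification --- identifying $X((i,j))$ with $-D^T(\1_{(i,j)})$ and checking the intertwining $\hat g\circ D^T=D^T\circ g_*$ on a vertex indicator --- is precisely the natural argument that fills that gap; the sign worry you flag is harmless, since $\hat g$ is by definition the natural action on oriented edges (and note that your reading of $X((i,j))$ as $+1$ on the two outgoing and $-1$ on the two incoming edges silently corrects an apparent $R$/$C$ index typo in the paper's displayed formula for $X((i,j))$).
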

\begin{lemma} \label{lem: invariantA} Let $n \ge 5$. Suppose $Y \in Z_n$ is $U_n$-invariant (i.e.,   $\hat{u}(Y)=Y$ for all $u \in U_n$). Then $Y$ induces a unique $\Gamma_n$-invariant mapping $A \colon B_n \rightarrow Z_n$ satisfying $A(X((0,0))) = Y$. Conversely, if $A$ is $\Gamma_n$-invariant then $Y = A(X((0,0)))$ satisfies $\hat{u}(Y) = Y$ for all $u \in U_n$.
\end{lemma}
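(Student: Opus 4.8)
The plan is to construct $A$ explicitly on the cut basis and then verify the two asserted properties, using that $V_n$ acts simply transitively on the vertices while the stabilizer of $(0,0)$ is exactly $U_n$ (for $n\ge 5$ this follows from Lemma~\ref{lem: automorphism}, since an element written as $T u$ with $u\in U_n$ fixing $(0,0)$ forces $T=I$). Since $T_{(i,j)}((0,0))=(i,j)$, Lemma~\ref{lem: crossfixed} gives $\hat{T}_{(i,j)}(X((0,0)))=X((i,j))$, so I would define $A$ on the basis $\ba_n^2$ (Lemma~\ref{lem: bases}) by $A(X((i,j)))=\hat{T}_{(i,j)}(Y)$ for $(i,j)\ne(0,0)$; this determines a unique linear map $A\colon B_n\to Z_n$. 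Uniqueness under the invariance hypothesis is then immediate: any $\Gamma_n$-invariant $A'$ with $A'(X((0,0)))=Y$ must satisfy $A'(X((i,j)))=A'\hat{T}_{(i,j)}(X((0,0)))=\hat{T}_{(i,j)}(Y)$, since $\hat{T}_{(i,j)}\in\Gamma_n$.

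Next I would pin down the value at the origin. The cut vectors satisfy the single relation $\sum_{(i,j)}X((i,j))=0$, so the requirement $A(X((0,0)))=Y$ is equivalent to the identity $\sum_{(i,j)}\hat{T}_{(i,j)}(Y)=0$; establishing this is the main obstacle. Writing $Y=\sum y^R_{i,j}e^R_{i,j}+\sum y^C_{i,j}e^C_{i,j}$ and summing over all translations collapses the expression to $\sigma^R\sum_{(i,j)}e^R_{i,j}+\sigma^C\sum_{(i,j)}e^C_{i,j}$, where $\sigma^R=\sum y^R_{i,j}$ and $\sigma^C=\sum y^C_{i,j}$ are the total row- and column-flows; as $\sum e^R_{i,j}$ and $\sum e^C_{i,j}$ are linearly independent it remains only to show $\sigma^R=\sigma^C=0$. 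This is exactly where the generators of $U_n$ are used: the reflection $\alpha$ reverses the orientation of every row edge, so $\sigma^R(\hat\alpha Y)=-\sigma^R(Y)$, and $\hat\alpha$-invariance forces $\sigma^R=0$; the swap $\beta$ interchanges row and column edges, so $\sigma^R(\hat\beta Y)=\sigma^C(Y)$, and $\hat\beta$-invariance then gives $\sigma^C=\sigma^R=0$.

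For $\Gamma_n$-invariance I would first reduce the problem: by Lemma~\ref{lem: induced} every element of $\Gamma_n$ is $\pm\hat g$ with $g\in H_n$, so by linearity it suffices to check $A\hat g=\hat g A$ on each $X((i,j))$ for $g\in H_n$. Using Lemma~\ref{lem: automorphism} I would factor $g\,T_{(i,j)}=T_{g((i,j))}\,u'$ with $u'=T_{g((i,j))}^{-1}\,g\,T_{(i,j)}\in U_n$ (it fixes $(0,0)$). Then Lemma~\ref{lem: crossfixed} and the $U_n$-invariance of $Y$ give $\hat g A(X((i,j)))=\hat g\hat{T}_{(i,j)}(Y)=\hat{T}_{g((i,j))}\hat{u'}(Y)=\hat{T}_{g((i,j))}(Y)=A(X(g((i,j))))=A\hat g(X((i,j)))$, where the identity $A(X((i,j)))=\hat{T}_{(i,j)}(Y)$ now holds for every vertex, including $(0,0)$, by the previous step.

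Finally, the converse is short: if $A$ is any $\Gamma_n$-invariant map, then each $u\in U_n$ fixes $(0,0)$, so $\hat u(X((0,0)))=X((0,0))$ by Lemma~\ref{lem: crossfixed}, whence $\hat u(Y)=\hat u A(X((0,0)))=A\hat u(X((0,0)))=A(X((0,0)))=Y$; thus $Y=A(X((0,0)))$ is automatically $U_n$-invariant. I expect the vanishing $\sum_{(i,j)}\hat{T}_{(i,j)}(Y)=0$ to be the only real obstacle; once the reflection/swap computation above is in hand, the remaining verifications are routine bookkeeping with Lemmas~\ref{lem: induced}--\ref{lem: crossfixed}.
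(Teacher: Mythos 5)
Your proposal is correct and follows essentially the same route as the paper: define $A$ on the cut basis by $A(X((i,j)))=\hat T_{(i,j)}(Y)$, verify $A(X((0,0)))=Y$ via the relation $\sum_{(i,j)}X((i,j))=0$, check $\Gamma_n$-invariance through the factorization $gT_{(i,j)}=T_{g((i,j))}u$ from Lemma~\ref{lem: automorphism}, and obtain the converse from Lemma~\ref{lem: crossfixed}. The only divergence is in proving the key identity $\sum_{(i,j)}\hat T_{(i,j)}(Y)=0$: you compute total row- and column-flows in the edge basis and kill them by $\hat\alpha$- and $\hat\beta$-invariance, whereas the paper expands $Y$ in the basis $\ba_n^1$, deduces $b=c=0$ from $\hat\alpha$-invariance, and uses that the translates of each small square sum to zero --- the two computations carry the same content.
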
\begin{proof}  Let $$Y = \sum_{(i,j) \ne (0,0)} a(i,j) S((i,j)) + bR + cK$$
be the basis expansion of $Y$. Note that $\hat{\alpha}(K)= - K$, $\hat{\alpha}(R) = R$, and $\hat{\alpha}(S((i,j)) )= - S((-i-1,j))$. Hence
$$\hat{\alpha}(Y) =- \sum_{(i,j) \ne (0,0)} a(i,j) S((-i-1,j))  +bR - cK.$$
Since $Y = \hat{\alpha}(Y)$, it follows that $c=0$. Similarly, $b=0$. Hence\begin{equation}\label{eq: Y}  \begin{split}
 \sum_{(i,j)\ne (0,0)}\hat{T}_{(i,j)}(Y)
&=  \sum_{(k,l)\ne (0,0)} a(k,l) (\sum_{(i,j)\ne (0,0)}\hat{T}_{(i,j)}(S((k,l))))\\&=- \sum_{(k,l)\ne (0,0)} a(k,l)S((k,l)) = -Y. \end{split} \end{equation}

 For $(i,j) \ne (0,0)$, let $A(X((i,j))) = \hat{T}_{(i,j)}(Y)$ and extend linearly to define $A$ on  $B_n$. Note that by \eqref{eq: Y}
$$A(X((0,0))) = A(-\sum_{(i,j)\ne (0,0)} X((i,j))) = -\sum_{(i,j)\ne (0,0)}\hat{T}_{(i,j)}(Y) =  Y $$
as desired. Now let us check that  $A$ is $\Gamma_n$-invariant.  Let $\gamma \in  \Gamma_n$. Then   $\gamma = \pm \hat{g}$ for some $g \in H_n$.
So it suffices to check that $\hat{g}A = A\hat{g}$.  Fix $(i,j) \in \mathbb{T}_n$. Since $H_n = V_n \cdot U_n$, it follows that $gT_{(i,j)} = T_{g((i,j))} u$ for some $u \in U_n$.
Hence \begin{align*}\hat{g} A(X((i,j))) &= \hat{g} \hat{T}_{(i,j)}(Y)\\&= \hat{T}_{g((i,j))}\hat{u}(Y)\\ & =\hat{T}_{g((i,j))}(Y)\\&= A(X(g((i,j))))\\& = A\hat{g}(X((i,j))).
\end{align*}
Now suppose that $A \colon B_n \rightarrow Z_n$ is
$\Gamma_n$-invariant.  By Lemma~\ref{lem: crossfixed}, for all $u
\in U_n$,
$$\hat{u}(A(X(0,0)))= A(\hat{u}(X((0,0))))= A(X((0,0)))$$ and
$$A(X((i,j)))= A(\hat{T}_{(i,j)}(X((0,0))))= \hat{T}_{(i,j)}(A(X(0,0))).$$ So  $Y=A(X((0,0)))$ is $U_n$-invariant  and $A$
is uniquely determined by $A(X((0,0)))$.
\end{proof}

\begin{lemma} \label{lem: invariantvectors} Suppose $n \ge 3$.   Let $Y =  \sum_{(i,j) \ne (0,0)} a(i,j) S((i,j)) + bR + cK \in Z_n.$ Then $Y$ is $U_n$-invariant if and only if
\begin{enumerate}
\item $b=c=0$,
\item $a(i,i)=0$ and $a(i,-i-1)=0$ for $0 \le i \le n-1$,
\item if $n = 2k+1$ is odd then $a(i,k) = a(k,j) =0$ for all $0 \le i,j \le n-1$,
\item if $j \ne i$ and $j \ne -i-1$ (and, if $n=2k+1$, then  $i \ne k$ and $j \ne k$)  then
\begin{align*} a(i,j)&=-a(-i-1,j)=-a(j,i) = a(-j-1,i) = -a(i,-j-1)\\ &= a(-i-1,-j-1) = -a(-j-1,-i-1)= a(j,-i-1). \end{align*}
\end{enumerate}
\end{lemma}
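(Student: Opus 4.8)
The plan is to use that $U_n=\langle\alpha,\beta\rangle$, so $Y$ is $U_n$-invariant if and only if $\hat\alpha(Y)=Y$ and $\hat\beta(Y)=Y$; everything then reduces to the action of the two generators on the coordinates $a(i,j),b,c$. First I would compute, edge by edge, the action of $\hat\alpha$ and $\hat\beta$ on the small-square vectors. One finds, for \emph{all} $(i,j)\in\mathbb{T}_n$,
$$\hat\alpha(S((i,j)))=-S((-i-1,j)),\qquad \hat\beta(S((i,j)))=-S((j,i)).$$
A companion computation shows that $\hat\beta$ interchanges $R$ and $K$ while $\hat\alpha$ fixes one of them and negates the other, so invariance under both generators immediately forces $b=c=0$, which is~(1). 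The two index maps $\sigma(i,j)=(-i-1,j)$ and $\tau(i,j)=(j,i)$ are involutions generating a copy of the dihedral group of order $8$ acting on $\mathbb{T}_n$, and assigning the sign $-1$ to each generator defines a sign character $\chi$ on this group. The whole statement should fall out of analysing this signed action orbit by orbit.

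The one genuine subtlety is that $S((0,0))\notin\ba_n^1$, so images of $Y$ must be rewritten in the basis using the unique relation $\sum_{(i,j)\in\mathbb{T}_n}S((i,j))=0$. I would handle this by setting $a(0,0):=0$ and treating $a$ as a function on all of $\mathbb{T}_n$. Reducing $\hat\beta(Y)$ is clean, because $\tau$ fixes $(0,0)$, and yields $a(i,j)=-a(j,i)$ for every $(i,j)$. Reducing $\hat\alpha(Y)$ produces a correction term, since $\sigma(0,0)=(n-1,0)\neq(0,0)$: the invariance equation comes out as $a(i,j)=-a(-i-1,j)+a(n-1,0)$. Removing this spurious constant is the main obstacle.

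To kill it, I would sum the last identity over all $(i,j)\in\mathbb{T}_n$. Since $(i,j)\mapsto(-i-1,j)$ is a bijection of $\mathbb{T}_n$, this gives $\sum a=-\sum a+n^2a(n-1,0)$, whereas the clean $\beta$-relation $a(i,j)=-a(j,i)$ forces $\sum a=0$. Hence $a(n-1,0)=0$ and the $\alpha$-equation collapses to $a(i,j)=-a(-i-1,j)$.

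With the two sign-reversing relations $a(i,j)=-a(-i-1,j)=-a(j,i)$ in hand, the rest is orbit bookkeeping for $\langle\sigma,\tau\rangle$. Iterating them around the orbit of a generic $(i,j)$ (whose stabiliser is trivial, so the orbit has eight points) produces precisely the eight equalities of~(4). Whenever a point $p$ is fixed by a group element $g$ with $\chi(g)=-1$, the relation $a(p)=\chi(g)a(p)=-a(p)$ forces $a(p)=0$: the diagonal $\{j=i\}$ (fixed by $\tau$) and the antidiagonal $\{j=-i-1\}$ (fixed by $\sigma\tau\sigma$) give~(2), and when $n=2k+1$ the lines $\{i=k\}$ (fixed by $\sigma$, since then $-k-1\equiv k$) and $\{j=k\}$ (fixed by $\tau\sigma\tau$) give~(3). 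For the converse, I would verify that conditions (1)--(4) make both generator-relations hold at \emph{every} point: the degenerate loci in (2) and (3) are permuted among themselves by $\sigma$ and $\tau$, so $a(-i-1,j)$ and $a(j,i)$ vanish automatically wherever $a(i,j)$ has been declared zero, and on the generic orbits the relations are built into~(4). This yields $\hat\alpha(Y)=\hat\beta(Y)=Y$, hence $U_n$-invariance.
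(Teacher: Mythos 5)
Your proposal is correct and follows essentially the same route as the paper: invariance is tested on the generators $\alpha,\beta$, conditions (2)--(4) come from the signed dihedral orbit structure (the paper simply tabulates the eight elements of $U_n$ acting on $S((i,j))$), and $b=c=0$ is obtained from the action on $R$ and $K$ exactly as you describe. The only divergence is in handling the correction term caused by $S((0,0))\notin\ba_n^1$: the paper absorbs it relation by relation, using coefficients already known to vanish (e.g.\ $a(n-1,n-1)=0$ from the diagonal condition), whereas you eliminate it once and for all via the summation argument giving $a(n-1,0)=0$ --- a clean variant that changes nothing essential.
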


\begin{proof}  It was proved above that if $Y$ is $U_n$-invariant then $b=c=0$. For the remaining conditions, consider
 the orbit of $S((i,j))$ under the action of $U_n$ as given by the following table:
\begin{center}
\begin{tabular}{c c}
$u \in U_n$ & $\hat{u}(S((i,j)))$ \\ \hline  $I$ & $S((i,j))$\\ $\alpha$ & $-S((-i-1,j))$\\
$\beta$ & $-S((j,i))$\\ $\alpha\beta$ & $S((-j-1,i))$\\ $\beta\alpha\beta$ & $-S((i,-j-1))$\\
$(\alpha\beta)^2$ & $S((-i-1,-j-1))$ \\ $\beta(\alpha\beta)^2$ & $-S((-j-1,-i-1))$ \\
$(\alpha\beta)^3$ &  $S((j,-i-1))$\\
\end{tabular}
\end{center} We determine necessary conditions for   $\hat{u}(Y)=Y$ to hold. Referring to the table, setting $u = \beta$  implies $a(i,i)=-a(i,i)$, i.e., $a(i,i)=0$ which is the first part of (2).  Setting  $u = \beta(\alpha\beta)^2$,  and taking into account that $$\hat{u}(S((n-1,n-1)) =
 - S((0,0)) = \sum_{(i,j) \ne (0,0)} S((i,j)),$$ yields
$$ a(i,-i-1)=  -a(i,-i-1) + a(n-1,n-1) = -a(i,-i-1)+0  \Rightarrow a(i,-i-1)=0,$$   which is the second part of  (2). Now suppose that $n = 2k+1$ is odd.
Setting $u = \alpha$, and taking into account that  $\hat{\alpha}(S((n-1,0))) = -S((0,0)) = \sum_{(i,j)\ne(0,0)}S((i,j))$ and that (from (2)) $a(n-1,0)=0$, yields
$$  a(k,j)  = -a(k,j)  + a(n-1,0) = -a(k,j)+ 0 =-a(k,j) \Rightarrow a(k,j)=0.$$ Setting $u = \beta$ gives $a(i,k) = -a(k,j)= 0$. So (3) is proved. Finally,  if  $(i,j)$ satisfies the assumptions of (4) then the orbit of $S((i,j))$ under $U_n$ has cardinality $8$ and (4) follows from the table by considering each $u \in U_n$ in turn. (Note
that  if $\hat{u}(S((i,j))) =\pm S((0,0))$ then $(i,j) \in \{(0,n-1),(n-1,0),(n-1,n-1)\}$ and so $a(i,j)=0$ from (1) and (2).)

This completes the proof of necessity of (1)-(4). Sufficiency is easily checked.
\end{proof}

\begin{corollary} \label{cor: dimensioninvariant} Let $n \ge 3$. Then the collection of $U_n$-invariant vectors $Y \in Z_n$ is a linear subspace of dimension $[n/2]([n/2]-1)/2$
with basis consisting of the vectors of the form
\begin{align*}V&= S((i,j)) -S((-i-1,j))-S((j,i))\\ &+S((-j-1,i))-S((i,-j-1))+ S((-i-1,-j-1))\\&-S((-j-1,-i-1))+ S((j,-i-1)),
 \end{align*} where $(i,j)$ satisfies the assumptions of (4) above.
\end{corollary}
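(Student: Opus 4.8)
The set of $U_n$-invariant vectors in $Z_n$ is the fixed-point set of the linear action $u\mapsto\hat u$ of $U_n$, so it is automatically a linear subspace; the real content is to exhibit the displayed basis and count it. The plan is to read both off directly from Lemma~\ref{lem: invariantvectors}.

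For the basis, I would first note that for \emph{any} index $(i,j)$ the vector $\sum_{u\in U_n}\hat u(S((i,j)))$ is $U_n$-invariant, since left translation permutes $U_n$. Reading the table in the proof of Lemma~\ref{lem: invariantvectors} term by term shows that, when $(i,j)$ satisfies the assumptions of part (4), this sum is exactly the vector $V$ in the statement; moreover the eight indices occurring in $V$ are then distinct and all different from $(0,0)$ (the indices whose $U_n$-orbit can reach $(0,0)$, namely those with $i=j$ or $j=-i-1$, are precisely the ones excluded by (4)), so no reduction via $S((0,0))=-\sum_{(i,j)\neq(0,0)}S((i,j))$ is needed. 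Conversely, Lemma~\ref{lem: invariantvectors} says a $U_n$-invariant $Y$ has $b=c=0$ and vanishing coefficients on all degenerate indices (conditions (1)--(3)), while on each orbit of size $8$ the eight coefficients are forced by (4) to be a single scalar multiple of the sign pattern appearing in $V$. Hence $Y$ is the corresponding linear combination of these vectors $V$, one per orbit, and since distinct orbits use disjoint sets of basis vectors $S((i,j))$, the $V$ are linearly independent. This establishes that they form a basis.

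It then remains to count the orbits. I would identify the induced $U_n$-action on the $S$-indices: writing $\iota(i):=-i-1$ (an involution of $\mathbb{Z}_n$), the generators act by $\hat\alpha\colon(i,j)\mapsto(\iota(i),j)$ and $\hat\beta\colon(i,j)\mapsto(j,i)$, so the index action is the order-$8$ dihedral group generated by applying $\iota$ to either coordinate and by swapping coordinates. A pair $(i,j)$ has trivial stabilizer — equivalently its orbit has size $8$ — exactly when it lies on none of the four reflection axes $i=j$, $j=\iota(i)$, $i=$(fixed point of $\iota$), $j=$(fixed point of $\iota$); these are precisely the exclusions in (4), using that $\iota$ has the unique fixed point $k=(n-1)/2$ when $n=2k+1$ is odd and no fixed point when $n$ is even. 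The dimension is then (number of valid pairs)$/8$, and a short parity split finishes the job: for $n$ even one gets $(n^2-2n)/8=\tfrac{n}{2}(\tfrac{n}{2}-1)/2$, and for $n$ odd one gets $((n-1)^2-2(n-1))/8=\tfrac{n-1}{2}(\tfrac{n-1}{2}-1)/2$, both equal to $[n/2]([n/2]-1)/2$.

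The routine parts are genuinely routine; the step to get right is the bookkeeping in the orbit count — verifying that the four excluded axes are exactly the locus of nontrivial stabilizers (so that (4) really cuts out the free orbits) and handling the odd-$n$ fixed point $k$ correctly both in the axes and in the final arithmetic. A secondary point worth confirming is that every size-$8$ orbit avoids $(0,0)$, so that each $V$ lives honestly in the span of $\{S((i,j)):(i,j)\neq(0,0)\}$ and the disjoint-support independence argument is valid.
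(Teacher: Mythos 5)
Your proposal is correct and follows the same route as the paper, which states this corollary as an immediate consequence of Lemma~\ref{lem: invariantvectors}: the vectors $V$ are exactly the $U_n$-orbit sums of the $S((i,j))$ read off the table, conditions (1)--(4) force every invariant vector to be a combination of them, and your orbit count (free orbits of the order-$8$ dihedral index action, giving $(n^2-2n)/8$ for $n$ even and $(n-1)(n-3)/8$ for $n$ odd) checks out and matches $[n/2]([n/2]-1)/2$ in both parities.
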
 \begin{theorem} \label{thm: invariantprojections}  Let $n \ge 5$.
 The collection of all   $\Gamma_n$-invariant projections $P$ on $E_n$  with range $Z_n$ is  parameterized by the linear space of
$\Gamma_n$-invariant mappings $A \colon B_n \rightarrow Z_n$ which
has dimension $[n/2]([n/2]-1)/2$. \end{theorem}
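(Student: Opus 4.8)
The plan is to obtain the theorem as an assembly of the two results that immediately precede it, Lemma~\ref{lem: invariantA} and Corollary~\ref{cor: dimensioninvariant}, by first reproducing the bijection between invariant projections and invariant intertwiners that was used in the proof of Theorem~\ref{T:InvProjEqSub}. The one point needing care at the outset is that $\Gamma_n$ is defined as a group of $\ell_1(E_n)$-isometries, whereas the block decomposition is orthogonal; so I would begin by recalling that every $\gamma\in\Gamma_n$, being an $\ell_1$-isometry, is a signed permutation of the edge basis and hence an orthogonal operator on $\ell_2(E_n)$. Since $\gamma$ preserves $Z_n$, it also preserves $B_n=Z_n^{\perp}$, so with respect to the orthogonal decomposition $\mathbb{R}^{E_n}=Z_n\oplus B_n$ each $\gamma$ is block-diagonal, $\gamma=\bigl(\begin{smallmatrix}\gamma_Z&0\\0&\gamma_B\end{smallmatrix}\bigr)$.

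Next I would set up the correspondence. Exactly as in the proof of Theorem~\ref{T:InvProjEqSub}, every projection $P$ with range $Z_n$ has, in this decomposition, the block form $P=\bigl(\begin{smallmatrix}I_{Z_n}&Q\\0&0\end{smallmatrix}\bigr)$ for the uniquely determined map $Q=P|_{B_n}\colon B_n\to Z_n$, and conversely every $Q\in\operatorname{Hom}(B_n,Z_n)$ yields a projection $P=P_{\rm orth}+Q\,(I-P_{\rm orth})$ onto $Z_n$. This is a bijection between projections onto $Z_n$ and $\operatorname{Hom}(B_n,Z_n)$. Invoking the identity $P\gamma=\gamma P\iff Q\gamma_B=\gamma_Z Q$ recorded in \eqref{E:Qintertw}, I conclude that $P$ is $\Gamma_n$-invariant precisely when $A:=Q$ intertwines the $\Gamma_n$-actions on $B_n$ and $Z_n$, i.e.\ when $A$ is a $\Gamma_n$-invariant mapping $B_n\to Z_n$ in the sense of Lemma~\ref{lem: invariantA}. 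This delivers the asserted parameterization: $P\mapsto P|_{B_n}$ is a bijection from the collection of $\Gamma_n$-invariant projections with range $Z_n$ onto the linear space of $\Gamma_n$-invariant mappings $A\colon B_n\to Z_n$, with $P_{\rm orth}$ corresponding to $A=0$.

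Finally, for the dimension I would chain the two bijections already proved. Lemma~\ref{lem: invariantA} (here is where $n\ge 5$ enters) gives a linear bijection $A\mapsto Y:=A(X((0,0)))$ from $\Gamma_n$-invariant mappings $B_n\to Z_n$ onto the $U_n$-invariant vectors of $Z_n$, and Corollary~\ref{cor: dimensioninvariant} computes the dimension of that space of $U_n$-invariant vectors to be $[n/2]([n/2]-1)/2$. Composing, the space of $\Gamma_n$-invariant mappings $A\colon B_n\to Z_n$ has dimension $[n/2]([n/2]-1)/2$, completing the proof. I expect no genuine obstacle at this stage, since the theorem is essentially a synthesis of the preceding lemmas; the only step demanding attention is the reduction from the $\ell_1$-setting of $\Gamma_n$ to the orthogonal block form, which is exactly what the signed-permutation description of $\ell_1$-isometries secures, together with verifying that $Q$ genuinely ranges over all of $\operatorname{Hom}(B_n,Z_n)$ so that no invariant projection is missed.
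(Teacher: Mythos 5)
Your proposal is correct and follows essentially the same route as the paper: the paper's proof likewise writes invariant projections in the block form $\begin{pmatrix} I & A\\ 0 & 0\end{pmatrix}$ with respect to $\ell_2(E_n)=Z_n\oplus B_n$ and then cites Lemma~\ref{lem: invariantA} together with Corollary~\ref{cor: dimensioninvariant} for the dimension count. Your write-up simply makes explicit the details (signed permutations being orthogonal, the intertwining equivalence \eqref{E:Qintertw}, and the linearity of $A\mapsto A(X((0,0)))$) that the paper leaves implicit.
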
 \begin{proof}
Note that
 $\ell_2(E_n) = Z_n\oplus B_n$. With respect to this  decomposition the  $\Gamma_n$-invariant projections are of the form
$$ P= \begin{bmatrix} I & A\\ 0 &0 \end{bmatrix},$$
where $A \colon B_n \rightarrow Z_n$ is $\Gamma_n$-invariant.  The
result follows from the previous lemma.
\end{proof} \begin{corollary} The orthogonal projection $P_{\orth}$ is the unique $\Gamma_n$-invariant projection if and only if $n \le 4$. \end{corollary}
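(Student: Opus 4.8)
The plan is to phrase everything through the block decomposition already used in Theorem~\ref{thm: invariantprojections}. With respect to the orthogonal splitting $\ell_2(E_n) = Z_n \oplus B_n$ (available for every $n \ge 3$), each $\Gamma_n$-invariant projection onto $Z_n$ has the form $P = \bigl(\begin{smallmatrix} I & A\\ 0 & 0\end{smallmatrix}\bigr)$ for a $\Gamma_n$-invariant linear map $A \colon B_n \to Z_n$, and $P_{\orth} = \bigl(\begin{smallmatrix} I & 0\\ 0 & 0\end{smallmatrix}\bigr)$ is precisely the case $A = 0$. Thus the assertion ``$P_{\orth}$ is the unique $\Gamma_n$-invariant projection'' is equivalent to ``the zero map is the only $\Gamma_n$-invariant $A \colon B_n \to Z_n$,'' and I will establish this equivalence by treating the ranges $n \ge 5$, $n = 3$, and $n = 4$ separately.

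For $n \ge 5$ the non-uniqueness is immediate from Theorem~\ref{thm: invariantprojections}: the space of $\Gamma_n$-invariant maps $A$ has dimension $[n/2]([n/2]-1)/2$, and since $[n/2] \ge 2$ this is at least $1$. Hence there is a nonzero invariant $A$, giving a $\Gamma_n$-invariant projection distinct from $P_{\orth}$.

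For $n = 3$ and $n = 4$ the key is that the \emph{forward} implication of Lemma~\ref{lem: invariantA} holds for all $n \ge 3$, not merely $n \ge 5$: a $\Gamma_n$-invariant $A$ yields $Y := A(X((0,0)))$, which is $U_n$-invariant because each $u \in U_n$ fixes $(0,0)$ (so $\hat u(Y) = A(\hat u(X((0,0)))) = A(X((0,0))) = Y$ by Lemma~\ref{lem: crossfixed}), and $A$ is recovered from $Y$ through $A(X((i,j))) = \hat T_{(i,j)}(Y)$ since $\{X((i,j)) \colon (i,j) \ne (0,0)\}$ is a basis of $B_n$. This uses only $U_n, V_n \subseteq H_n$ and Lemma~\ref{lem: crossfixed}, none of which requires $n \ge 5$. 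Consequently $A \mapsto Y$ injects the space of $\Gamma_n$-invariant maps into the space of $U_n$-invariant vectors of $Z_n$, whose dimension is $[n/2]([n/2]-1)/2$ by Corollary~\ref{cor: dimensioninvariant}. For $n = 3$ this equals $0$, so $Y = 0$, whence $A = 0$ and $P_{\orth}$ is unique.

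The case $n = 4$ is the crux and the main obstacle, since Corollary~\ref{cor: dimensioninvariant} now gives a one-dimensional space of $U_4$-invariant vectors, spanned by the eight-term vector $V$ formed from the orbit of a single small square. The extra input is the exceptional automorphism $\phi$ of Remark~\ref{rem: n=4}: it induces $\hat\phi \in \Gamma_4$ and fixes $(0,0)$. Applying $\Gamma_4$-invariance ($A\hat\phi = \hat\phi A$) to $X((0,0))$ and using $\hat\phi(X((0,0))) = X(\phi((0,0))) = X((0,0))$ (Lemma~\ref{lem: crossfixed}), I obtain $Y = A(X((0,0))) = \hat\phi(A(X((0,0)))) = \hat\phi(Y)$, so $Y$ must additionally be $\hat\phi$-invariant. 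The plan is then to verify that $\hat\phi$ does not preserve the line $\langle V\rangle$: because $\phi$ carries the small-square cycle of $S((0,1))$ onto a full ``row'' cycle (exactly the behavior recorded in Remark~\ref{rem: n=4}), the image $\hat\phi(V)$ acquires a nonzero component along $R$ when expanded in the basis $\ba_4^1$, whereas $V$ has no $R$- or $K$-component by part (1) of Lemma~\ref{lem: invariantvectors}. Hence $\hat\phi(V) \ne V$, the only $\hat\phi$-invariant vector in $\langle V\rangle$ is $0$, so $Y = 0$, $A = 0$, and $P_{\orth}$ is unique. The one computational point needing care — and the step most likely to require checking — is that the $R$-component produced by $\hat\phi(S((0,1)))$ survives and is not cancelled by the $\hat\phi$-images of the other seven small squares in $V$; this reduces to expressing the resulting row cycle in $\ba_4^1$ (where it contributes coefficient $1$ to $R$) and tracking the small-square contributions of the remaining terms.
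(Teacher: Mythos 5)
Your overall architecture matches the paper's: non-uniqueness for $n\ge5$ via Theorem~\ref{thm: invariantprojections}, uniqueness for $n=3$ from the vanishing of the space of $U_3$-invariant vectors, and for $n=4$ an appeal to the exceptional automorphism $\phi$ of Remark~\ref{rem: n=4} to rule out the one-dimensional space of $U_4$-invariant vectors. The reduction to showing $\hat{\phi}(V)\ne V$ is exactly the paper's step. However, the specific mechanism you propose for verifying $\hat{\phi}(V)\ne V$ fails: the $R$-component of $\hat{\phi}(V)$ is in fact zero. The vector $V$ (based at $(i,j)=(1,0)$) contains the terms $-S((0,1))$ and $+S((2,3))$; under $\phi$ the first square is carried to the row cycle at height $1$ and the second to the row cycle at height $3$, and each of these, expanded in $\ba_4^1$, has $R$-coefficient $+1$. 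With the attached signs $-1$ and $+1$ these contributions cancel exactly. (Likewise, $S((1,0))$ and $-S((3,2))$ are carried to column cycles whose $K$-contributions cancel.) So the cancellation you flagged as ``the step most likely to require checking'' does occur, and your proposed test detects nothing.

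The conclusion $\hat{\phi}(V)\ne V$ is nevertheless true, but it must be read off from the small-square coefficients rather than from $R$ or $K$. The paper computes directly that
$\hat{\phi}(Y)= S((1,0)) + 2S((2,0)) - S((0,1)) - 2S((3,1)) - 2S((0,2)) - S((3,2)) + 2S((1,3)) + S((2,3))$,
which differs from $Y$ already in the coefficient of $S((2,0))$ ($-1$ versus $+2$); the coefficients $\pm 2$ arise precisely because the row and column cycles, rewritten in $\ba_4^1$, dump sums of small squares back into the expansion. Your proof therefore needs its final computation replaced by this (or an equivalent) comparison of small-square coefficients; as written, the $n=4$ case is not established. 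A very minor additional point: the corollary as stated also covers $n=2$, which the paper dismisses as easy and you do not address.
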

\begin{proof} We omit  the easy proof  of  uniqueness in the  case $n=2$.  Note that $[n/2]([n/2]-1)/2>0$ if and only if   $n \ge 4$, so uniqueness for $n=3$ follows from Corollary~\ref{cor: dimensioninvariant} and non-uniqueness for $n \ge 5$ follows from
 Theorem~\ref{thm: invariantprojections}. So suppose $n = 4$.  By Lemma~\ref{lem: invariantvectors} the  linear space of $U_4$-invariant vectors is one-dimensional
with basis vector \begin{align*}
Y &= S((1,0)) - S((2,0)) + S((3,1)) - S((3,2))\\ &+ S((2,3))- S((1,3)) + S((0,2)) - S((0,1)). \end{align*}
Consider the automorphism $\phi \colon \mathbb{T}_4 \rightarrow \mathbb{T}_4$ described in Remark~\ref{rem: n=4}. Suppose, to derive a contradiction,  that
there exists a $\Gamma_4$-invariant projection $P$ with
$P((X(0,0))) = Y$. Note that  $\hat{\phi}(X((0,0))) = X((0,0))$, and so
$$\hat{\phi}(Y)= \hat{\phi}(P(X((0,0))) )= P(\hat{\phi}(X((0,0)))) =P(X((0,0))) = Y.$$
However, a direct calculation shows that
 \begin{align*}
\hat{\phi}(Y) &= S((1,0)) + 2S((2,0)) - S((0,1)) - 2S((3,1))\\
&- 2S((0,2)) - S((3,2)) + 2S((1,3)) + S((2,3)). \end{align*}
So $\hat{\phi}(Y) \ne Y$, which is the desired contradiction. This proves uniqueness in the case $n =4$.
\end{proof}
\subsubsection{Minimal projections} If $P$ is a
$\Gamma_n$-invariant projection onto $Z_n$, then $\|P\|_1 =
\|P(e)\|_1$ for any edge vector $e$ since the action of $\Gamma_n$
on $E_n$ is transitive. So Theorem~\ref{thm: invariantprojections}
can be used to compute the norm of an invariant  minimal
projection $P_{\min}$. The problem reduces to minimizing a convex
piecewise-linear function of $[n/2]([n/2]-1)/2$ variables. In the
case $n=5$, the family of $\Gamma_n$-invariant projections is
one-dimensional but
 $P_{\orth}$ is the unique minimal  $\Gamma_n$-invariant projection.

For $n = 6$, however,  $P_{\orth}$ is not a minimal projection.  The corresponding piecewise-linear
function of $3$ variables was minimized with the aid of MAPLE.  The results are as follows.

\begin{proposition} \label{prop: n=6} There is a unique $\Gamma_6$-invariant minimal projection $P_{\min}$ from $\ell_1((E_6))$ onto $Z_6$ with
$$\|P_{\min} \|_1 = \frac{109}{36}\quad\text{and}\quad \|I-P_{\min}\|_1 = 3.$$
On the other hand,
$$\|P_{\orth}\|_1 = \frac{3839}{1260}\quad\text{and}\quad \|I-P_{\orth}\|_1= \frac{317}{105}.$$
\end{proposition}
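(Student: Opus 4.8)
The plan is to reduce the problem to a finite-dimensional piecewise-linear minimization by exploiting the parametrization of invariant projections and the transitivity of $\Gamma_6$ on edges. By Theorem~\ref{thm: invariantprojections} together with Corollary~\ref{cor: dimensioninvariant}, for $n=6$ the $\Gamma_6$-invariant projections onto $Z_6$ form a three-parameter family: each is determined by the vector $Y=A(X((0,0)))$ ranging over the three-dimensional space of $U_6$-invariant vectors, with explicit basis $V_1,V_2,V_3$ (the three orbit-sums of Corollary~\ref{cor: dimensioninvariant}, one for each of the three $U_6$-orbits of off-(anti)diagonal small squares). Writing $Y=t_1V_1+t_2V_2+t_3V_3$, I would use the remark preceding the proposition: since $\Gamma_6$ acts transitively on $E_6$ by $\ell_1$-isometries and both $P$ and $I-P$ commute with $\Gamma_6$, one has $\|P\|_1=\|P(e)\|_1$ and $\|I-P\|_1=\|(I-P)(e)\|_1$ for any single edge vector, so I may fix $e=e^R_{0,0}$.

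Next I would make $P(e^R_{0,0})$ explicit as a function of $(t_1,t_2,t_3)$. Decompose $e^R_{0,0}=z_0+b_0$ orthogonally with $z_0\in Z_6$, $b_0\in B_6$; since $P=\begin{bmatrix} I & A\\ 0 & 0\end{bmatrix}$ relative to $\ell_2(E_6)=Z_6\oplus B_6$, we get $P(e^R_{0,0})=z_0+A(b_0)$ and $(I-P)(e^R_{0,0})=b_0-A(b_0)$. The vectors $z_0,b_0$ are computed from the discrete Hodge decomposition $b_0=D^{T}L^{+}D\,e^R_{0,0}$, where $L=DD^{T}$ is the graph Laplacian of $\mathbb{T}_6$ and $L^{+}$ its Moore--Penrose inverse. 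Because $\mathbb{T}_6$ is a Cayley graph of $\mathbb{Z}_6\times\mathbb{Z}_6$, $L$ is diagonalized by the Fourier characters of $\mathbb{Z}_6^2$, with eigenvalues $4-2\cos(\pi a/3)-2\cos(\pi b/3)$ (the integers $1,\dots,8$), so $z_0$ and $b_0$ are evaluated by finite Fourier sums; the appearance of the denominators in $3839/1260$ and $317/105$ reflects the reciprocals of these eigenvalues. Expanding $b_0$ in the cross basis $\{X((i,j)):(i,j)\ne(0,0)\}$ of $B_6$ and using $A(X((i,j)))=\hat{T}_{(i,j)}(Y)$ from Lemma~\ref{lem: invariantA} expresses $A(b_0)$ as a vector in $Z_6$ depending linearly on $(t_1,t_2,t_3)$.

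With these ingredients, $F(t_1,t_2,t_3):=\|z_0+A(b_0)\|_1$ is a convex piecewise-linear function of three variables, and $\|P_{\min}\|_1$ is its minimum; this is the minimization carried out with MAPLE, yielding value $109/36$ and a minimizer $(t_1^\ast,t_2^\ast,t_3^\ast)$. Setting all $t_m=0$ recovers $P_{\orth}$ (since $Y=0$ corresponds to $A=0$), so $\|P_{\orth}\|_1=\|z_0\|_1=3839/1260$ and $\|I-P_{\orth}\|_1=\|b_0\|_1=317/105$ drop out of the same Fourier evaluation; substituting the optimal parameters into $\|b_0-A(b_0)\|_1$ gives $\|I-P_{\min}\|_1=3$.

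For uniqueness I would argue that the minimizing set of the convex function $F$ is a single point: it suffices to verify that the one-sided directional derivative of $F$ at $(t_1^\ast,t_2^\ast,t_3^\ast)$ is strictly positive in every direction, which is a finite check over the linear pieces active at the optimum (equivalently, that the active facets of the graph of $F$ pin the minimizer down in all three coordinate directions). I expect the main obstacle to be the exact arithmetic of the two feeder computations rather than any conceptual difficulty: evaluating the Laplacian Fourier sums to obtain $z_0,b_0$ as exact rationals, and then correctly partitioning $\mathbb{R}^3$ into the regions of linearity of the $\ell_1$-norm so as to certify the global minimum and its uniqueness. Both are routine but error-prone, which is precisely why they are delegated to MAPLE.
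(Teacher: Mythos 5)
Your proposal is correct and follows essentially the same route as the paper: the paper likewise invokes transitivity of $\Gamma_6$ on $E_6$ to reduce $\|P\|_1$ to $\|P(e)\|_1$ for a single edge, uses Theorem~\ref{thm: invariantprojections} to parameterize the invariant projections by the three-dimensional space of $U_6$-invariant vectors, and minimizes the resulting convex piecewise-linear function of three variables with MAPLE. The extra detail you supply (computing $z_0,b_0$ via the Laplacian pseudo-inverse and Fourier characters of $\mathbb{Z}_6^2$, and certifying uniqueness of the minimizer by checking strict positivity of one-sided directional derivatives) is a legitimate fleshing-out of the computation that the paper leaves implicit.
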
  For $n \le 5$,  $P_{\orth}$ is the unique $\Gamma_n$-invariant minimal  projection and the relevant norms are as summarized in the following table:

\begin{center} \
\begin{tabular}{c c c } \label{tab: projnorms}
$n$ & $\|P_{\orth}\|_1$ & $\|I-P_{\orth}\|_1$ \\ \hline
$2$ &  $1$ & $3/2$ \\ $3$ & $19/9$ & $2$ \\
$4$ & $41/16$ & $5/2$ \\
$5$ & $69/25$ & $68/25$
\end{tabular}
\end{center} \begin{proposition} \label{prop: projconstant} Let $n \ge 3$ and let $P_{\min}$ be a minimal $\Gamma_n$-invariant projection from $\ell_1(E_n)$ onto $Z_n$.
Then $$\lambda(\lip_0(\mathbb{T}_n)) = \|I-P_{\min}\|_1 = \|P_{\min}\|_1 - \frac{1}{n^2}.$$
\end{proposition}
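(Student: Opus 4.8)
The plan is to express $\lambda(\lip_0(\mathbb{T}_n))$ as the infimum of $\|I-R\|_1$ over \emph{all} projections $R\colon\ell_1(E_n)\to Z_n$, and then to exploit the fact that among $\Gamma_n$-invariant projections the two quantities $\|R\|_1$ and $\|I-R\|_1$ differ by the constant $1/n^2$; this forces the minimizers of the two problems to coincide at $P_{\min}$. First I would set up the duality exactly as in the proof of Proposition~\ref{prop: minimalnormprops}(iii): since $\lip_0(\mathbb{T}_n)$ is isometric to $(B_n,\|\cdot\|_\infty)\subset\ell_\infty(E_n)$ and $\ell_\infty(E_n)$ is $1$-injective, we have $\lambda(\lip_0(\mathbb{T}_n))=\inf\{\|Q\|_\infty\colon Q\colon\ell_\infty(E_n)\to B_n\text{ a projection}\}$. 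The assignment $Q\mapsto R:=I-Q^*$ is a bijection between projections of $\ell_\infty(E_n)$ onto $B_n$ and projections of $\ell_1(E_n)$ onto $Z_n=B_n^\perp$, with $\|Q\|_\infty=\|Q^*\|_1=\|I-R\|_1$, so
\[\lambda(\lip_0(\mathbb{T}_n))=\inf_R\|I-R\|_1,\]
the infimum taken over all projections $R$ onto $Z_n$.

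Next I would reduce to invariant projections. Given any projection $R$ onto $Z_n$, the averaging formula \eqref{E:InvProj} produces a $\Gamma_n$-invariant projection $R_{\mathrm{inv}}=\frac1{|\Gamma_n|}\sum_{g\in\Gamma_n}g^{-1}Rg$ onto $Z_n$ (each $g\in\Gamma_n$ is an $\ell_1$-isometry leaving $Z_n$ invariant); since $I-R_{\mathrm{inv}}=\frac1{|\Gamma_n|}\sum_{g}g^{-1}(I-R)g$, I get $\|I-R_{\mathrm{inv}}\|_1\le\|I-R\|_1$, so the infimum above is attained among $\Gamma_n$-invariant projections. For such an $R$, transitivity of $\Gamma_n$ on $E_n$ (which holds for all $n\ge3$ via translations and the coordinate swap $\beta$) gives $\|R\|_1=\|Re\|_1$ and $\|I-R\|_1=\|(I-R)e\|_1$ for any edge vector $e$, because both $R$ and $I-R$ are $\Gamma_n$-invariant. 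Writing $f=Re\in Z_n$, the diagonal value $f(e)=\langle Re,e\rangle$ is independent of $e$: if $g\in\Gamma_n$ satisfies $ge=\pm e'$, then using that $g$ is orthogonal on $\ell_2(E_n)$ and $Rg=gR$ one has $\langle Re',e'\rangle=\langle gRe,ge\rangle=\langle Re,e\rangle$. Hence $f(e)$ equals $\operatorname{tr}(R)/|E_n|=\dim(Z_n)/(2n^2)=(n^2+1)/(2n^2)=:t$, where $\operatorname{tr}(R)=\dim(Z_n)=n^2+1$ by Lemma~\ref{lem: bases}.

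Since $0<t<1$, isolating the $e$-coordinate in the two $\ell_1$-norms would give
\[\|Re\|_1-\|(I-R)e\|_1=|f(e)|-|1-f(e)|=t-(1-t)=2t-1=\tfrac1{n^2},\]
so $\|I-R\|_1=\|R\|_1-\tfrac1{n^2}$ for \emph{every} $\Gamma_n$-invariant projection $R$. Because this difference is a fixed constant, minimizing $\|I-R\|_1$ over invariant $R$ is the same as minimizing $\|R\|_1$, and both minima are attained at a minimal $\Gamma_n$-invariant projection $P_{\min}$; combining this with the two reductions above yields
\[\lambda(\lip_0(\mathbb{T}_n))=\|I-P_{\min}\|_1=\|P_{\min}\|_1-\tfrac1{n^2}.\]
I expect the main obstacle to be not any single computation but the structural observation that $\|R\|_1$ and $\|I-R\|_1$ differ by a constant on invariant projections: this is what lets the minimizer of $\|R\|_1$ (namely $P_{\min}$) simultaneously minimize $\|I-R\|_1$, and it is what upgrades the general inequality of Proposition~\ref{prop: minimalnormprops}(iii) to an exact equality. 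The duality identity and the averaging reduction are routine, but they must be arranged so that the transitivity and trace computations apply to the same class of projections.
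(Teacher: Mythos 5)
Your proposal is correct and follows essentially the same route as the paper: both rest on the trace identity $\langle Re,e\rangle=\dim(Z_n)/|E_n|=(n^2+1)/(2n^2)$ giving $\|I-R\|_1=\|R\|_1-1/n^2$ for every $\Gamma_n$-invariant projection $R$, combined with the duality $Q\leftrightarrow I-Q^*$ between projections of $\ell_\infty(E_n)$ onto $B_n$ and projections of $\ell_1(E_n)$ onto $Z_n$ and an averaging reduction to invariant projections. The only difference is cosmetic: you transfer everything to the $\ell_1$ side before averaging, while the paper averages the minimal $Q$ on the $\ell_\infty$ side and then passes to $P=I-Q^*$.
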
 \begin{proof}
Let $P$ be any $\Gamma_n$-invariant projection onto $Z_n$.
Note that $\langle P(e), e\rangle$ is independent of $e \in E_n$. So
$$ \langle P(e), e\rangle =\frac{\operatorname{trace}(P)}{|E_n|} = \frac{\operatorname{dim}(Z_n)}{2n^2}=\frac{n^2+1}{2n^2}. $$
From \eqref{eq: I-P}, we obtain
  \begin{equation} \label{eq: I-P} \|I - P\|_1 = \|e - P(e)\|_1= \|P(e)\|_1 + 1 -2 \langle P(e), e\rangle = \|P\|_1 - \frac{1}{n^2}. \end{equation}  Now suppose that $Q$ is a minimal projection from $\ell_\infty(E_n)$ onto $B_n$. We may  assume  that $Q$ is $\Gamma_n$-invariant. Then  $P=I - Q^*$ is a $\Gamma_n$-invariant projection onto $Z_n$ (see Corollary~\ref{cor: T^*} below for a proof of this in greater generality).  Thus,
$$\|Q\|_\infty = \|Q^*\|_1 = \|I-P\|_1=\|P\|_1 - \frac{1}{n^2} \ge  \|P_{\min}\|_1 - \frac{1}{n^2}= \|I-P_{\min}\|.$$
The reverse inequality $\|Q\|_\infty \le \|I-P_{\min}\|_1$ follows
from the minimality of  $Q$ and the fact that $I-P_{\min}^*$ is a
projection onto $B_n$. Hence $$\lambda(\lip_0(\mathbb{T}_n)) =
\|Q\|_\infty = \|I - P_{\min}\|_1 = \|P_{\min}\|_1 -
\frac{1}{n^2}.$$
\end{proof}
\begin{corollary} (a)   For $n \le 5$, $\lambda(\lip_0(\mathbb{T}_n)) = \|I-P_{\orth}\|_1$ and \newline
 $c_1((\mathcal{P}(\mathbb{T}_n),d_{W_1}))  \le \|I - P_{\orth}\|_1$ as given in the table above.

(b) $\lambda(\lip_0(\mathbb{T}_6))=3$ and  $c_1((\mathcal{P}(\mathbb{T}_6),d_{W_1}))  \le 3$.
\end{corollary}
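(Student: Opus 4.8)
The plan is to observe that this corollary is a direct assembly of the two main quantitative results already in hand: Proposition~\ref{prop: projconstant}, which computes $\lambda(\lip_0(\mathbb{T}_n))$ exactly in terms of a minimal $\Gamma_n$-invariant projection, and Proposition~\ref{prop: minimalnormprops}(ii), which bounds the $L_1$-distortion of $(\mathcal{P}(\mathbb{T}_n),d_{W_1})$ by $\|I-P_{\min}\|_1$. In both parts the only real content is the identification of the correct minimal projection to substitute, after which the numerical values are read off from the table and from Proposition~\ref{prop: n=6}.

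For part (a) I would argue as follows. For $3 \le n \le 5$ it was established above that $P_{\orth}$ is the unique minimal $\Gamma_n$-invariant projection onto $Z_n$; hence we may take $P_{\min}=P_{\orth}$ in Proposition~\ref{prop: projconstant}, which immediately yields $\lambda(\lip_0(\mathbb{T}_n))=\|I-P_{\orth}\|_1$, with the three values $2,\ 5/2,\ 68/25$ supplied by the table. The degenerate case $n=2$ is handled by the separate elementary verification already referred to. For the distortion bound, I would note that $P_{\orth}$ is in fact a minimal projection onto $Z_n$ \emph{among all} projections, not merely among invariant ones: by the averaging construction \eqref{E:InvProj} the minimal norm over invariant projections equals the minimal norm over all projections, and for $n\le 5$ the invariant minimizer is unique and equals $P_{\orth}$. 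Consequently Proposition~\ref{prop: minimalnormprops}(ii) applies with $P_{\min}=P_{\orth}$ and gives $c_1((\mathcal{P}(\mathbb{T}_n),d_{W_1}))\le \|I-P_{\orth}\|_1$.

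For part (b) the same scheme is used, except that $P_{\orth}$ is no longer minimal when $n=6$. Here Proposition~\ref{prop: n=6} supplies the unique minimal $\Gamma_6$-invariant projection $P_{\min}$ together with the value $\|I-P_{\min}\|_1=3$. Substituting this $P_{\min}$ into Proposition~\ref{prop: projconstant} gives $\lambda(\lip_0(\mathbb{T}_6))=\|I-P_{\min}\|_1=3$, and substituting it into Proposition~\ref{prop: minimalnormprops}(ii) gives $c_1((\mathcal{P}(\mathbb{T}_6),d_{W_1}))\le 3$.

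The proof involves no genuine obstacle, since all the analytic work has already been done in the preceding propositions; the only point requiring care is the passage from \emph{invariant} minimality to \emph{global} minimality needed to feed Proposition~\ref{prop: minimalnormprops} correctly. This is exactly the content of the invariant-averaging observation \eqref{E:InvProj}, so I would make that reduction explicit rather than leave it implicit. One should also keep track of the hypothesis $n\ge 3$ in Proposition~\ref{prop: projconstant} and relation \eqref{eq: I-P}, which is why the $n=2$ entry of the table must be justified by hand rather than by the general formula.
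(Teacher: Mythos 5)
Your proposal is correct and follows essentially the same route as the paper: the paper's proof is a one-line citation of Proposition~\ref{prop: minimalnormprops} for the $c_1$ bounds and of Propositions~\ref{prop: n=6} and \ref{prop: projconstant} together with the table for the $\lambda(\lip_0(\mathbb{T}_n))$ values. Your added care about passing from invariant minimality to global minimality (via the averaging construction \eqref{E:InvProj}) and about the $n=2$ entry lying outside the $n\ge 3$ hypothesis of Proposition~\ref{prop: projconstant} is sound and, if anything, makes explicit what the paper leaves implicit.
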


\begin{proof}  The estimates for $c_1(\mathcal{P}(\mathbb{T}_n),d_{W_1}))$ in (a) and (b) follow from Proposition~\ref{prop: minimalnormprops}
and those for $\lip_0(\mathbb{T}_n)$ follow from Propositions~\ref{prop: n=6} and \ref{prop: projconstant} and the table above. \end{proof}

\begin{remark}  The estimates for $c_1((\mathcal{P}(\mathbb{T}_n),d_{W_1}))$ above improve somewhat the estimates for small $n$
given by John's theorem \cite{J}  asserting
that if $Y$ is an $n$-dimensional normed space then   $d_{\BM}(Y,\ell_2^n) \le \sqrt{n}$:
$$c_1((\mathcal{P}(\mathbb{T}_n),d_{W_1})) =c_1(\tc(\mathbb{T}_n)) \le d_{\BM}(\tc(\mathbb{T}_n), \ell_2^{n^2-1})\le \sqrt{n^2-1},$$
where the first equality is a result of \cite{NS}.
\end{remark}

\begin{corollary} For all $n \ge 3$, $d_{\BM}(\tc(\mathbb{T}_n), \ell_1^{n^2 -1}) \ge \| P_{\min}\|_1 - 1/n^2$.
\end{corollary}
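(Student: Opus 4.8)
The plan is to combine the duality chain already established in the proof of Proposition~\ref{prop: minimalnormprops}(iv) with the exact evaluation of $\lambda(\lip_0(\mathbb{T}_n))$ furnished by Proposition~\ref{prop: projconstant}. The latter sharpens the generic lower bound $\|P_{\min}\|_1-1$ of part~(iv) to $\|P_{\min}\|_1-\tfrac1{n^2}$, and this sharpening is the whole point of the corollary.

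First I would reproduce the duality step from Proposition~\ref{prop: minimalnormprops}(iv): setting $N=\dim(\tc(\mathbb{T}_n))=n^2-1$ and using $\tc(\mathbb{T}_n)^*=\lip_0(\mathbb{T}_n)$ together with the invariance of the Banach--Mazur distance under duality, one has
$$ d_{\BM}(\tc(\mathbb{T}_n),\ell_1^{\,n^2-1}) = d_{\BM}(\lip_0(\mathbb{T}_n),\ell_\infty^{\,n^2-1}) \ge \lambda(\lip_0(\mathbb{T}_n)),$$
where the final inequality holds because $\lambda(\ell_\infty^N)=1$ and the projection constant is submultiplicative with respect to Banach--Mazur distance, so that $\lambda(\lip_0(\mathbb{T}_n))\le d_{\BM}(\lip_0(\mathbb{T}_n),\ell_\infty^N)\,\lambda(\ell_\infty^N)$.

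Next I would invoke Proposition~\ref{prop: projconstant}, which is valid for every $n\ge 3$ and gives the identity $\lambda(\lip_0(\mathbb{T}_n))=\|P_{\min}\|_1-\tfrac1{n^2}$. Substituting this into the displayed inequality yields exactly $d_{\BM}(\tc(\mathbb{T}_n),\ell_1^{\,n^2-1})\ge\|P_{\min}\|_1-\tfrac1{n^2}$, which is the claim. Since both ingredients hold for all $n\ge 3$, the range of validity stated in the corollary is automatic.

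The only point requiring care---and the place where I would expect a reader to pause---is the meaning of $\|P_{\min}\|_1$: in Proposition~\ref{prop: minimalnormprops} it denotes the norm of a minimal projection among \emph{all} projections of $\ell_1(E_n)$ onto $Z_n$, whereas Proposition~\ref{prop: projconstant} speaks of a minimal \emph{$\Gamma_n$-invariant} projection. These two quantities coincide by the averaging argument of Section~\ref{S:OldNew}\,\eqref{I:Inv} (formula~\eqref{E:InvProj}), which shows that the minimal invariant projection norm equals the global minimal projection norm; hence no ambiguity arises. There is no genuine analytic obstacle in this corollary: the substantive work---that on the torus the defect between $\|P_{\min}\|_1$ and $\|I-P_{\min}\|_1$ is the small quantity $\tfrac1{n^2}$ rather than the $1$ available in general---has already been carried out in Proposition~\ref{prop: projconstant}, and the remaining step is the routine duality bookkeeping above.
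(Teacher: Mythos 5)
Your proposal is correct and follows essentially the same route as the paper: the duality chain from the proof of Proposition~\ref{prop: minimalnormprops}(iv) gives $d_{\BM}(\tc(\mathbb{T}_n),\ell_1^{n^2-1})\ge\lambda(\lip_0(\mathbb{T}_n))$, and Proposition~\ref{prop: projconstant} supplies the identity $\lambda(\lip_0(\mathbb{T}_n))=\|P_{\min}\|_1-\tfrac1{n^2}$. Your remark reconciling the two meanings of $P_{\min}$ via the averaging argument of \eqref{E:InvProj} is a welcome clarification that the paper leaves implicit.
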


\begin{proof} By the proof of Proposition~\ref{prop: minimalnormprops}(iv),
\[d_{\BM}(\tc(\mathbb{T}_n), \ell_1^{n^2 -1}) \ge
\lambda(\lip_0(\mathbb{T}_n)) = \| P_{\min}\|_1 -
\frac{1}{n^2}.\qedhere\]
\end{proof}

Unfortunately, we do not have sharp asymptotic estimates for $\|P_{\min}\|_1$ or even $\|P_{\orth}\|_1$.  The following summarizes what is known.
\begin{proposition} \label{prop: asymptotic} There exist positive constants $a_1$ and $a_2$ such that for $n \ge 2$,
$$ a_1 \sqrt{\log(n)} \le \|P_{\min}\|_1 \le a_2 \log(n).$$
\end{proposition} \begin{proof} The lower estimate follows from the estimate
$c_1((\mathcal{P}(\mathbb{T}_n),d_{W_1}))  \ge  a_1
\sqrt{\log(n)}$ due to Naor and Schechtman \cite{NS} combined with
Proposition~\ref{prop: minimalnormprops}(ii). The upper estimate
is valid for all finite  graphs $G=(V,E)$, with $|V|=n$,  equipped
with the graph distance. It is  proved using  stochastic  tree
embeddings
 \cite{S}.
\end{proof}
We close this section  with a central open question related to our results.
\begin{question} Find good   asymptotic estimates (improving on Proposition~\ref{prop: asymptotic})  for $\|P_{\min}\|_1$ and/or  $\|P_{\orth}\|_1$.
\end{question}

\subsection{$m$-dimensional tori} \label{sec: higherdimensions}

In this section we consider invariant projections on the cycle
spaces of   $m$-dimensional  tori $\mathbb{Z}_n^m$ for $m,n \ge
2$.  These graphs are $3$-connected, so by Remark~\ref{rem:
Whitney} the cycle-preserving isometries of the edge space are
induced by graph automorphisms.

The cases $n=2,3$ can be placed in a more general setting of
Hamming graphs considered in Section \ref{S:HamGr}. Here we start
with the case $n=4$.

\subsubsection{The case $n=4$} Next we prove uniqueness for
$\mathbb{Z}_4^m$. In the next lemma we use the notation introduced
in Section~\ref{sec: torus}.

\begin{lemma}\label{lem: squarecycle} For each $(i,j) \in \mathbb{T}_4$ there exists $\psi \in H_4$ such that
$$\psi((0,0))=(0,0)\quad\text{and}\quad \hat{\psi}(S((i,j))) = -S((i,j)).$$
\end{lemma}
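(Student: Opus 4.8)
The plan is to go through the $16$ small squares $S((i,j))$, split them into three families according to the location of $(i,j)$, and exhibit for each an origin-fixing automorphism with the required reversing property; $U_4$-symmetry will then reduce the number of genuinely new cases to one.

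First I would dispose of the ``diagonal'' and ``anti-diagonal'' squares using only the dihedral stabilizer $U_4$ of $(0,0)$, reading the action directly off the table in the proof of Lemma~\ref{lem: invariantvectors}. Since $\hat\beta(S((i,j))) = -S((j,i))$ and $\beta((0,0)) = (0,0)$, the reflection $\psi = \beta$ handles every square with $i=j$. Since $\beta(\alpha\beta)^2$ acts by $S((i,j)) \mapsto -S((-j-1,-i-1))$, which equals $-S((i,j))$ exactly when $i+j \equiv 3 \pmod 4$, the element $\psi = \beta(\alpha\beta)^2 \in U_4$ handles every anti-diagonal square. The same table shows these are the only options inside $U_4$: among its four sign-reversing entries (from $\alpha$, $\beta$, $\beta\alpha\beta$, $\beta(\alpha\beta)^2$) only the latter two can equal $-S((i,j))$, and only under the conditions just stated. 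Hence the eight squares with $i\ne j$ and $i+j \not\equiv 3 \pmod 4$ cannot be reversed by any element of $U_4$, and this is the crux of the lemma.

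For those eight squares one is forced to use the exceptional, non-geometric automorphisms of $\mathbb{T}_4$ whose existence is recorded in Remark~\ref{rem: n=4}. The transparent way to produce one is the isomorphism $\mathbb{T}_4 \cong Q_4$ with the $4$-dimensional hypercube, obtained by applying a Gray-code bijection $\mathbb{Z}_4 \cong \mathbb{Z}_2^2$ in each coordinate; under it the stabilizer of $(0,0)$ becomes the full group $S_4$ of coordinate permutations. I would then check that a single transposition of two hypercube coordinates fixes the square $S((1,0))$ setwise, fixing the corners $(1,0)$ and $(2,1)$ while interchanging $(2,0)$ and $(1,1)$; this reverses the orientation, so the corresponding $\psi$ satisfies $\psi((0,0))=(0,0)$ and $\hat\psi(S((1,0))) = -S((1,0))$. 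Finally I would propagate this to the other seven squares by conjugation: the eight exceptional indices form a single orbit under the $U_4$-action $(i,j)\mapsto(-i-1,j)$, $(i,j)\mapsto(j,i)$ on indices, so if $u\in U_4$ carries $(1,0)$ to $(i,j)$ then $u\psi u^{-1}$ fixes $(0,0)$ and sends $S((i,j))$ to $-S((i,j))$.

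The only delicate point, and the main obstacle, is the construction and verification of the exceptional automorphism for $S((1,0))$: it must come from outside the classical symmetry group $U_4$, and one has to confirm that it really is a graph automorphism, that it fixes the origin, and that it reverses exactly the intended square. Once such a $\psi$ is in hand (either as the hypercube coordinate swap above, or written out as an explicit permutation of the $16$ vertices in the style of the map $\phi$ in Remark~\ref{rem: n=4}), everything else is the routine bookkeeping described above.
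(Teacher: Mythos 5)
Your proof is correct, but it takes a genuinely different route from the paper's. The paper first uses $\hat\beta(S((i,j)))=-S((j,i))$ to reduce to the six squares with $0\le i<j\le 3$, and then for each of these writes down an explicit $\psi$ built from the exceptional automorphism $\phi$ of Remark~\ref{rem: n=4} (namely $\phi$, $\beta\phi$, and conjugates $\phi^{-1}u\phi$ with $u\in U_4$), leaving six direct computations with the permutation $\phi$ to check. You instead dispose of the eight diagonal and anti-diagonal squares entirely inside $U_4$ (via $\beta$ and $\beta(\alpha\beta)^2$, read off the table in Lemma~\ref{lem: invariantvectors}), observe that the remaining eight indices form a single $U_4$-orbit so that conjugation reduces them to the single square $S((1,0))$, and then manufacture the one needed exceptional automorphism conceptually through the graph isomorphism $\mathbb{T}_4\cong Q_2\,\square\,Q_2=Q_4$ given by a Gray code in each factor, under which the origin-stabilizer is all of $S_4$; the coordinate transposition you pick does fix $(0,1,0,0)$ and $(1,1,0,1)$ and swap $(1,1,0,0)$ with $(0,1,0,1)$, hence reverses the induced $4$-cycle (which has no chords in $\mathbb{T}_4$), so $\hat\psi(S((1,0)))=-S((1,0))$, and the sign bookkeeping in the conjugation step works out regardless of the signs in the $U_4$-table. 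What your approach buys is a single hands-on verification instead of six, plus an explanation of \emph{where} the exceptional automorphisms of $\mathbb{T}_4$ come from (the hidden hypercube structure), which also illuminates why Lemma~\ref{lem: automorphism} fails at $n=4$; what the paper's approach buys is brevity, since $\phi$ is already written out in Remark~\ref{rem: n=4}. One cosmetic slip: in your aside about which elements of $U_4$ can reverse a square, the two that can are $\beta$ and $\beta(\alpha\beta)^2$ (not $\beta\alpha\beta$ and $\beta(\alpha\beta)^2$, since $2j\equiv 3\pmod 4$ has no solution); this aside is only motivational and does not affect the argument.
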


\begin{proof}
 This follows from the  table in the proof of Lemma~\ref{lem: invariantvectors} and from the definition of $\phi$ in
Remark~\ref{rem: n=4}.  Since $\hat{\beta}(S((i,j))) = -S((j,i)))$, it suffices to check the claim  in the range  $0 \le i < j \le 3$:
$$\hat{\psi}(S((0,1))) = -S((0,1))\quad\text{for}\quad \psi = \phi^{-1}\alpha\phi;$$
$$\hat{\psi}(S((0,2))) = -S((0,2))\quad\text{for}\quad \psi = \beta\phi.$$
$$\hat{\psi}(S((0,3))) = -S((0,3))\quad\text{for}\quad \psi =\phi^{-1}\alpha\phi;$$
$$\hat{\psi}(S((1,2))) = -S((1,2))\quad\text{for}\quad \psi =\phi^{-1}\beta\alpha\beta\phi;$$
$$\hat{\psi}(S((1,3))) = -S((1,3))\quad\text{for}\quad \psi =\phi;$$
\[\hat{\psi}(S((2,3))) = -S((2,3))\quad\text{for}\quad \psi
=\phi^{-1}\alpha\phi.\qedhere\]
\end{proof}

\begin{lemma} \label{lem: extensionfrom2tom}  Suppose $\psi \in H_4$ is given by
$$\psi((i_1,i_2)) = (j_1,j_2) \qquad ((i_1,i_2)\in \mathbb{T}_4).$$
Then, for each $m \ge 2$,  $\psi$ generates a graph automorphism $\Psi$ of $\mathbb{Z}_4^m$ given by
$$\Psi((i_1,i_2,i_3,\dots,i_m)) = (j_1,j_2,i_3,\dots,i_m) \qquad  ((i_1,\dots,i_m) \in \mathbb{Z}_4^m).$$
Moreover, if $\psi((0,0))=(0,0)$, then  $\Psi((0,\dots,0))=(0,\dots,0)$.
\end{lemma}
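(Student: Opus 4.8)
The plan is to verify directly that the prescribed map $\Psi$ is a graph automorphism of $\mathbb{Z}_4^m$ by checking that it preserves adjacency, and then to confirm the two bookkeeping claims (that $\Psi$ is well-defined as a permutation and that it fixes the origin when $\psi$ does). Since $\psi\in H_4$ acts only on the first two coordinates while $\Psi$ copies $\psi$ there and leaves all remaining coordinates $i_3,\dots,i_m$ untouched, almost everything reduces to the two-dimensional facts already available.

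First I would recall the adjacency structure of $\mathbb{Z}_4^m$: two vertices $(i_1,\dots,i_m)$ and $(i_1',\dots,i_m')$ are adjacent precisely when they agree in all but one coordinate, say coordinate $t$, and in that coordinate $i_t' = i_t \pm 1 \pmod 4$. I would split into two cases according to the location of the single differing coordinate. If the differing coordinate is some $t\ge 3$, then the first two coordinates of the two vertices coincide; applying $\Psi$ leaves coordinate $t$ unchanged and applies the \emph{same} value of $\psi$ to the (common) first pair, so the images still differ only in coordinate $t$ by $\pm 1$, hence remain adjacent. If the differing coordinate is $t\in\{1,2\}$, then coordinates $3,\dots,m$ agree and are fixed by $\Psi$, while the first pair $(i_1,i_2)$ and $(i_1',i_2')$ are adjacent vertices of $\mathbb{T}_4$; since $\psi$ is a graph automorphism of $\mathbb{T}_4$, their $\psi$-images are adjacent in $\mathbb{T}_4$, so the full $\Psi$-images are adjacent in $\mathbb{Z}_4^m$. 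This case analysis gives that $\Psi$ preserves adjacency, and the symmetric argument (or the fact that $\psi^{-1}$ induces $\Psi^{-1}$ by the same recipe) shows $\Psi$ preserves non-adjacency as well.

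Next I would check that $\Psi$ is a well-defined bijection: it is the ``product'' $\psi\times\mathrm{id}_{\mathbb{Z}_4^{m-2}}$ acting coordinatewise, and since $\psi$ is a bijection of $\mathbb{T}_4$ and the identity is a bijection of $\mathbb{Z}_4^{m-2}$, the map $\Psi$ is a bijection of $\mathbb{Z}_4^m$. Combined with adjacency preservation, this shows $\Psi\in H_m:=\au(\mathbb{Z}_4^m)$. Finally, the origin-fixing claim is immediate: if $\psi((0,0))=(0,0)$, then $\Psi((0,0,0,\dots,0)) = (0,0,0,\dots,0)$ because the first two coordinates are sent to $(0,0)$ by $\psi$ and the remaining coordinates $0,\dots,0$ are fixed.

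I do not expect any serious obstacle here; the statement is essentially the observation that automorphisms of a factor lift to automorphisms of a Cartesian product when extended by the identity on the other factors. The only point requiring a little care is the case distinction on which coordinate differs, and in particular making sure that when the differing coordinate is among the first two, one genuinely uses that $\psi$ is an automorphism of $\mathbb{T}_4$ (not merely a bijection of its vertex set). This is exactly the hypothesis $\psi\in H_4$ supplies, so the argument goes through cleanly.
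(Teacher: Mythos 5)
Your proof is correct and fills in exactly the routine verification that the paper dispatches with ``Immediate from the definitions'': adjacency preservation via the case split on which coordinate differs, bijectivity as a product of bijections, and the origin-fixing claim. No substantive difference in approach.
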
    \begin{proof} Immediate from the definitions.
\end{proof}

\begin{lemma} \label{lem: spanningset} Let $m \ge 2$. The cycle space $Z(\mathbb{Z}_n^m)$ is spanned by the collection of all  vectors $v_S$ corresponding to  $(i,j)$-parallel
small square cycles and all $i$-parallel  cycle vectors  $v_R$ corresponding to an $i$-parallel row cycle R with vertices:
 $$(x_1,\dots,x_{i-1},\{0,1,2,\dots,n-1\},x_{i+1},\dots,x_m),$$
where $x_1,\dots, x_m$ are arbitrary.
\end{lemma}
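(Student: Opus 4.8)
The plan is to recognize $\mathbb{Z}_n^m$ as a Cayley graph and invoke Proposition~\ref{P:GenCase}. Concretely, let $e_1,\dots,e_m$ be the standard generators of the additive group $\mathbb{Z}_n^m$ and set $\x=\{\pm e_1,\dots,\pm e_m\}$, so that $\mathbb{Z}_n^m=\cay(\mathbb{Z}_n^m,\x)$ with $\x=\x^{-1}$. The abelian group $\mathbb{Z}_n^m$ admits the standard presentation whose relators (generalizing Example~\ref{ex: cayley} from $m=2$ to arbitrary $m$) are the commutators $-e_i-e_j+e_i+e_j$ for $1\le i<j\le m$, together with the $n$-fold words $n e_i=\underbrace{e_i+\dots+e_i}_{n}$ for $1\le i\le m$; adjoining the commutators abelianizes the free group and the $n e_i$ impose the torsion relations $n e_i=0$, which yields exactly $\mathbb{Z}_n^m$.

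By Proposition~\ref{P:GenCase}, $Z(\mathbb{Z}_n^m)$ is spanned by the signed indicator functions of the directed cycles produced by these relators based at every $y\in\mathbb{Z}_n^m$, so it remains only to identify those cycles geometrically. First I would trace the cycle attached to a commutator relator $r=-e_i-e_j+e_i+e_j$ based at $y$: feeding $g_1=-e_i,\,g_2=-e_j,\,g_3=e_i,\,g_4=e_j$ into the vertex formula of Proposition~\ref{P:GenCase} reads off the four-cycle $y\to e_j+y\to e_i+e_j+y\to e_i+y\to y$, which (up to the orientation convention, hence up to sign of the indicator function) is precisely the $(i,j)$-parallel small square cycle with corner $y$, i.e. one of the vectors $v_S$. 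Next I would trace the cycle attached to $r=n e_i$ based at $y$, where $g_1=\dots=g_n=e_i$: the formula gives $y\to e_i+y\to 2e_i+y\to\dots\to (n-1)e_i+y\to y$, which is exactly the $i$-parallel row cycle through $y$, i.e. one of the vectors $v_R$; letting $y$ range over the group realizes every such row cycle with the remaining coordinates held at arbitrary fixed values. Since translation by $y$ sweeps out all positions, the full collection of vectors $v_S$ and $v_R$ coincides with the spanning family furnished by Proposition~\ref{P:GenCase}, which proves the lemma.

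I expect no substantive obstacle here: the argument is structurally immediate once Proposition~\ref{P:GenCase} is in hand, and the only points demanding care are bookkeeping—verifying that the displayed relators genuinely present $\mathbb{Z}_n^m$ and matching the order and orientation of generators in the vertex sequence to the geometric small-square and row cycles. The one degenerate situation is $n=2$, where the relator $2e_i$ traverses a single edge forward and back and so contributes the zero vector; this is harmless and is consistent with the cycle space of the hypercube $\mathbb{Z}_2^m$ being generated by its small-square ($4$-cycle) vectors alone.
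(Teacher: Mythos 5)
Your proof is correct and follows essentially the same route as the paper: both present $\mathbb{Z}_n^m$ as a Cayley graph with the standard generators, take the commutator and $n$-torsion relators as in Example~\ref{ex: cayley}, identify the corresponding relator cycles as the small squares and row cycles, and conclude via Proposition~\ref{P:GenCase}. Your explicit tracing of the relator cycles and the remark on the degenerate $n=2$ case are just slightly more detailed bookkeeping than the paper provides.
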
 \begin{proof}   As in Example~\ref{ex: cayley} we view the graph associated to
$\mathbb{Z}_n^m$ as the Cayley graph of a group with generators $g_i$ ($1 \le i \le m $) satisfying the following relations:
\begin{equation} \label{eq: smallsquares}  g_{i}+g_{j}=g_{j}+g_{i}\qquad (i \ne j), \end{equation} and
\begin{equation} \label{eq: rowcycle}  \underbrace{g_i+\dots+g_i}_{n~{\rm times}}=0\qquad (i=1,\dots,m).
\end{equation}
The cycles corresponding to \eqref{eq: smallsquares} are $(i,j)$-parallel small squares and those corresponding to \eqref{eq: rowcycle} are $i$-parallel row cycles. Hence the result follows from Proposition~\ref{P:GenCase}.
\end{proof} \begin{theorem} Let $m \ge 2$.  Then  $P_{\operatorname{orth}}$ is the unique invariant projection on $Z(\mathbb{Z}_4^m)$.
\end{theorem}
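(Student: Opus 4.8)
The plan is to reduce the statement to the assertion that the only cycle fixed by the stabilizer of the origin is zero, and then to establish this by a symmetry (orthogonality) argument applied to a spanning set of the cycle space. I would set up the reduction exactly as in Theorem~\ref{thm: invariantprojections}: writing operators with respect to the orthogonal decomposition $\ell_2(E)=Z(\mathbb{Z}_4^m)\oplus B(\mathbb{Z}_4^m)$, every projection onto the cycle space has the form $\begin{bmatrix} I & A\\ 0 & 0\end{bmatrix}$ with $A\colon B(\mathbb{Z}_4^m)\to Z(\mathbb{Z}_4^m)$ linear, and $P_{\orth}$ is the one with $A=0$. Since $\mathbb{Z}_4^m$ is $3$-connected, Remark~\ref{rem: Whitney} gives that each cycle-preserving $\ell_1$-isometry is $\pm\hat g$ for a graph automorphism $g$, and commuting with $-\hat g$ is the same as commuting with $\hat g$; hence $P$ is invariant if and only if $A\hat g=\hat g A$ for all $g\in\au(\mathbb{Z}_4^m)$. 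Writing $T_v$ for translation by $v$ and $X(v)$ for the cross cut at $v$ (so $\hat T_v X(\mathbf 0)=X(v)$, as in Lemma~\ref{lem: crossfixed}), invariance forces $A(X(v))=\hat T_v(Y)$ where $Y:=A(X(\mathbf 0))$, and taking $g$ in the stabilizer $\operatorname{Stab}(\mathbf 0)$ of the origin shows that $Y$ is $\operatorname{Stab}(\mathbf 0)$-invariant. Because $\{X(v):v\ne\mathbf 0\}$ is a basis of the cut space (as in Lemma~\ref{lem: bases}), it therefore suffices to prove that the only $\operatorname{Stab}(\mathbf 0)$-invariant vector $Y\in Z(\mathbb{Z}_4^m)$ is $Y=0$: this forces $A=0$ and hence $P=P_{\orth}$.

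To prove $Y=0$, I would show that $Y$ is orthogonal to every member of the spanning set of $Z(\mathbb{Z}_4^m)$ supplied by Lemma~\ref{lem: spanningset}, namely all small-square cycles $v_S$ and all row cycles $v_R$; since $Y$ itself lies in $Z(\mathbb{Z}_4^m)$, orthogonality to a spanning set yields $\langle Y,Y\rangle=0$, i.e. $Y=0$. The mechanism is the flip trick: if $g\in\operatorname{Stab}(\mathbf 0)$ satisfies $\hat g(c)=-c$ for a generator $c$, then, because $\hat g$ is orthogonal on $\ell_2(E)$ and $\hat g(Y)=Y$, we get $\langle Y,c\rangle=\langle\hat g Y,\hat g c\rangle=\langle Y,-c\rangle=-\langle Y,c\rangle$, whence $\langle Y,c\rangle=0$.

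It then remains to exhibit, for each generator, an origin-fixing automorphism negating it. For a small square lying in the coordinate plane of directions $1,2$, Lemma~\ref{lem: squarecycle} supplies $\psi\in H_4$ fixing $(0,0)$ with $\hat\psi(S((i,j)))=-S((i,j))$ for every position $(i,j)\in\mathbb{T}_4$; extending $\psi$ by Lemma~\ref{lem: extensionfrom2tom} to the automorphism $\Psi$ of $\mathbb{Z}_4^m$ acting on the first two coordinates and fixing the rest, $\Psi$ fixes $\mathbf 0$ and negates the corresponding small square in each parallel $(1,2)$-slice. Composing with coordinate permutations (which fix $\mathbf 0$) handles every coordinate plane, so $\langle Y,v_S\rangle=0$ for all small squares. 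For a row cycle in direction $k$, the reflection $\rho_k$ negating the $k$-th coordinate fixes $\mathbf 0$ and reverses every $k$-parallel row, so $\hat\rho_k(v_R)=-v_R$ and $\langle Y,v_R\rangle=0$. This completes the orthogonality check and hence the proof.

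The main obstacle is the small-square case, which rests on the delicate case analysis of Lemma~\ref{lem: squarecycle}; that lemma in turn exploits the exotic automorphism $\phi$ of $\mathbb{T}_4$ from Remark~\ref{rem: n=4} (the phenomenon special to $n=4$ that a small square can be sent to a row cycle). By contrast the row-cycle case is immediate from coordinate reflections and needs no special structure. The only points requiring care are the bookkeeping of reference-orientation signs when verifying $\hat\Psi(v_S)=-v_S$ and $\hat\rho_k(v_R)=-v_R$, and confirming that the extension $\Psi$ acts identically on all parallel slices, so that \emph{every} small square, not merely those through the origin, is negated by some origin-fixing automorphism.
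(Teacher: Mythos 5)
Your proposal is correct and follows essentially the same route as the paper's proof: the same reduction to showing that the only cycle vector fixed by the stabilizer of the origin is zero, the same spanning set from Lemma~\ref{lem: spanningset}, and the same negating automorphisms supplied by Lemmas~\ref{lem: squarecycle} and~\ref{lem: extensionfrom2tom} (plus coordinate reflections for row cycles). The only difference is cosmetic: where the paper averages over $\Gamma_0(\mathbb{Z}_4^m)$ and notes that each orbit sum of spanning vectors vanishes, you use the flip to get $\langle Y,c\rangle=-\langle Y,c\rangle$ for each spanning vector $c$ and conclude $Y\perp Z(\mathbb{Z}_4^m)$, hence $Y=0$; both mechanisms are valid.
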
 \begin{proof} Let $\Gamma_0(\mathbb{Z}_n^m)$ denote the group of automorphisms of $\mathbb{Z}_n^m$ which fix $(0,\dots,0)$.  Let $S$ be a small square cycle. Without loss of generality, we may assume that $S$ is $(1,2)$-parallel.
By Lemmas~\ref{lem: squarecycle} and \ref{lem: extensionfrom2tom} there exists $\psi_S \in \Gamma_0(\mathbb{Z}_4^m)$ such that
 $\hat{\psi}_S(v_S) = - v_S$.  Similarly, if $R$ is a $1$-parallel  row cycle (without loss of generality)  then
$\hat{\psi}_R(v_R) = - v_R$, where $\psi_R \in \Gamma_0(\mathbb{Z}_4^m)$ is given by  $\psi_R(x_1,\dots,x_n) = (-x_1,\dots,x_n)$. Hence, as in the proof of Lemma~\ref{lem: v=0},

$$\sum_{\psi \in \Gamma_0(\mathbb{Z}_4^m)} \hat{\psi}(v_S) = \sum_{\psi \in \Gamma_0(\mathbb{Z}_4^m)} \hat{\psi}(v_R) = 0.$$
By Lemma~\ref{lem: spanningset} it follows that if $v \in Z(\mathbb{Z}_4^m)$ is  $\Gamma_0(\mathbb{Z}_4^m)$-invariant, then $v = 0$.

 Let $X$  be the sum of all oriented edge vectors with initial vertex $(0,0,\dots,0)$. Note that $X$ is a $\Gamma_0(\mathbb{Z}_4^m)$-invariant
vector belonging to the cut space  $B(\mathbb{Z}_4^m)$.  Let $P$ be an invariant projection onto $ Z(\mathbb{Z}_4^m)$.
Then, for all $\phi \in  \Gamma_0(\mathbb{Z}_4^m)$,
$$ \hat{\phi}(P(X)) = P(\hat{\phi}(X)) = P(X).$$
Hence $P(X) \in Z(\mathbb{Z}_4^m)$ is $\Gamma_0(n,m)$-invariant,
 so $P(X)=0$ from the  above.
The argument  given in Lemma~\ref{lem: invariantA} for $m=2$
readily extends to $m \ge 2$ to show that $P$ annihilates
$B(\mathbb{Z}_4^m)$, i.e., $P = P_{\rm{orth}}$.
\end{proof}

\subsubsection{The case $n\ge5$} Next we show that for $n \ge 5$
and $m \ge2 $ there exist nonorthogonal invariant projections onto
$Z(\mathbb{Z}_n^m)$.
\begin{proposition} \label{prop: isometryZn} Let  $n \ge 5$ and $m \ge 2$.  For each $\psi \in \Gamma_0(\mathbb{Z}_n^m)$ there exist a permutation $\sigma$ of $\{1,\dots,m\}$ and signs $\alpha_j = \pm1$ ($1 \le j \le m$) such that \begin{equation} \label{eq: psi}
\psi((i_1,\dots,i_m)) = (\alpha_1 i_{\sigma(1)},\dots,\alpha_m i_{\sigma(m)}). \end{equation}
\end{proposition}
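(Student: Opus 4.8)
The plan is to prove the nontrivial inclusion: every $\psi\in\Gamma_0(\mathbb{Z}_n^m)$ has the stated ``signed permutation of coordinates'' form, the converse being immediate. Write $e_i\in\mathbb{Z}_n^m$ for the vector with a $1$ in coordinate $i$ and $0$ elsewhere, so the neighbours of a vertex $v$ are exactly the $v\pm e_i$, $1\le i\le m$. The whole argument rests on one structural fact, which is also the only place the hypothesis $n\ge5$ enters: for $n\ge 5$ the only $4$-cycles in $\mathbb{Z}_n^m$ are the ``small squares'' $v,\,v+e_i,\,v+e_i+e_j,\,v+e_j$ with $i\ne j$. Indeed, a $4$-cycle corresponds to steps $s_1,s_2,s_3,s_4\in\{\pm e_k\}$ with $s_1+s_2+s_3+s_4=0$ and no backtracking ($s_{t+1}\ne -s_t$); if two distinct axes occur the components along each must cancel, forcing a $+e_i,-e_i$ pair and a $+e_j,-e_j$ pair, which after excluding backtracks is a small square, while a single axis would require $4e_i=0$, i.e.\ $n\mid 4$, impossible for $n\ge5$. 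This degeneracy is exactly what creates the extra automorphisms at $n=4$, where the row cycle $C_4$ is itself a $4$-cycle (cf.\ Remark~\ref{rem: n=4}).

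Next I would extract from this an automorphism-invariant notion of collinearity. Two edges at a common vertex $v$ lie in a common small square precisely when they point along different axes, and hence (by the classification) precisely when they lie in a common $4$-cycle; the collinear pair $v\to v+e_i$ and $v\to v-e_i$ lies in no common $4$-cycle. Since $\psi$ maps $4$-cycles bijectively to $4$-cycles, it preserves this relation, so it carries each collinear triple $v-e_i,\,v,\,v+e_i$ to a collinear triple $\psi(v)-e_l,\,\psi(v),\,\psi(v)+e_l$, taking midpoints to midpoints. This is the $m$-dimensional analogue of the mechanism used for $\mathbb{T}_n$ in Lemma~\ref{lem: automorphism}.

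Now I would pin down $\psi$. At the origin $\psi$ permutes the $m$ coordinate axes, defining $\tau\in S_m$ and signs $\epsilon_i=\pm1$ with $\psi(e_i)=\epsilon_i e_{\tau(i)}$. Propagating the midpoint property along the line $0,e_i,2e_i,\dots$ gives $\psi(ke_i)=k\epsilon_i e_{\tau(i)}$ by induction. To pass to arbitrary vertices I would show $\tau$ and the $\epsilon_i$ are \emph{globally} constant: for $j\ne i$, the two $e_i$-edges $\{w,w+e_i\}$ and $\{w+e_j,w+e_i+e_j\}$ are opposite edges of one small square, which $\psi$ maps to a small square whose opposite edges share an axis and an orientation, so the axis and sign attached to the $e_i$-direction agree at $w$ and at $w+e_j$. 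As any two vertices are joined by axis moves, $\psi(v+e_i)-\psi(v)=\epsilon_i e_{\tau(i)}$ for every $v$, and telescoping along a path from $0$ to $v=\sum_k i_k e_k$ yields $\psi(v)=\sum_k \epsilon_k i_k e_{\tau(k)}$, which is the asserted formula with $\sigma=\tau^{-1}$ and $\alpha_l=\epsilon_{\tau^{-1}(l)}$.

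The clean part is the $4$-cycle classification; the genuine work is the global-consistency step, namely checking that the axis-permutation and sign data read off at the origin do not drift as one moves to parallel coordinate lines. The small-square propagation handles this, but it is the part that must be written with care, and in particular one must verify that the \emph{signs}, not merely the axes, are transported consistently.
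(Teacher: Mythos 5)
Your proposal is correct and follows essentially the same route as the paper: both arguments rest on the fact that for $n\ge 5$ the only $4$-cycles are small squares, hence collinear triples map to collinear triples, and then propagate the axis/sign data read off at the origin by induction along coordinate directions (the paper normalizes so that $\sigma$ and the $\alpha_j$ are trivial and shows $\psi=\mathrm{id}$ by a double induction, which is the same mechanism in different bookkeeping). Your writeup is somewhat more self-contained in that it spells out the $4$-cycle classification and the sign-consistency across parallel lines, which the paper delegates to the proof of Lemma~\ref{lem: automorphism}.
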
 \begin{proof} Since $\psi$ fixes $(0,\dots,0)$ it is clear that there is a unique choice of  $\sigma$ and $\alpha_j = \pm 1$ such that
\eqref{eq: psi} holds for all points of the form $(0,\dots,0,1,0,\dots,0)$.

Without loss of generality, we may assume that $\sigma$ is the identity permutation and $\alpha_j = 1$ ($1 \le j \le m$).  Hence $\psi$ fixes each point of the form $(0,\dots,0,1,0,\dots,0)$. By induction on $\operatorname{card} \{1 \le j \le m \colon i_j = 1\}$, it follows that  \begin{equation}\label{eq: r=0} \text{$\psi(x) = x$
for $x = (i_j)_{j=1}^m$, where $i_j \in \{0,1\}$} \end{equation}
We shall prove by induction that  $\psi(x) = x$ for all $x  \in \mathbb{Z}_n^m$. To that end, let $0 \le r \le m$ and suppose that \begin{equation}
\label{eq: inductiononr}
 \text{
$\psi(x) = x$  if $i_j \in \{0,1\}$ for all $j>r$ and  $i_j \in \mathbb{Z}_n$ for all $j \le r$}. \end{equation}
Note that, by \eqref{eq: r=0}, $r=0$ satisfies \eqref{eq: inductiononr}. Suppose that \eqref{eq: inductiononr} is satisfied for some $r$, where $0 \le r < m$.
Fix values  $x_j \in \mathbb{Z}_n$ for $1 \le j \le r$ and  $x_j \in \{0,1\}$ for $j \ge r+2$. Since \eqref{eq: inductiononr} is satisfied, it follows that
$$\psi((x_1,\dots,x_r, i_{r+1},x_{r+2},\dots x_m) )= (x_1,\dots,x_r, i_{r+1},x_{r+2},\dots, x_m)$$
provided $i_{r+1} \in \{0,1\}$. Since $n \ge 5$, it follows  as in the proof of Lemma~\ref{lem: automorphism}  that $\psi$ maps `collinear triples' onto collinear triples. Hence  \eqref{eq: inductiononr} is satisfied by $r+1$, which completes the proof of the inductive hypothesis. So \eqref{eq: inductiononr} is satisfied by $r=m$, i.e., $\psi$  is the identity mapping as claimed.
\end{proof} \begin{theorem} Let $n \ge 5$ and $m \ge 2$.  There exist nontrivial invariant projections onto $Z(\mathbb{Z}_n^m)$. \end{theorem}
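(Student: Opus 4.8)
The plan is to reduce the problem to producing a single nonzero cycle vector fixed by the stabilizer of the origin and then to promote it to a projection exactly as in the two-dimensional case. Write $\Gamma_0 = \Gamma_0(\mathbb{Z}_n^m)$ for the group of automorphisms fixing $(0,\dots,0)$; by Proposition~\ref{prop: isometryZn} these are precisely the signed coordinate permutations $\psi((i_1,\dots,i_m)) = (\alpha_1 i_{\sigma(1)},\dots,\alpha_m i_{\sigma(m)})$. Since $\mathbb{Z}_n^m$ is $3$-connected, Remark~\ref{rem: Whitney} shows that every element of the group $\Gamma(\mathbb{Z}_n^m)$ of $\ell_1$-isometries preserving $Z(\mathbb{Z}_n^m)$ is of the form $\pm\hat g$ for an automorphism $g$, so $\Gamma(\mathbb{Z}_n^m)$-invariance reduces to invariance under all $\hat g$; moreover the full automorphism group factors as $V\cdot\Gamma_0$, where $V$ is the group of translations $T_v$ ($v\in\mathbb{Z}_n^m$).

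First I would construct the invariant cycle vector. By Corollary~\ref{cor: dimensioninvariant}, for $n\ge 5$ the space of $U_n$-invariant vectors of the two-dimensional cycle space is nonzero; fix a nonzero such $Y_0$, a combination of small squares, viewed inside $Z(\mathbb{Z}_n^m)$ as supported in the coordinate plane $P_{12} = \{(i,j,0,\dots,0)\}$ through the origin. Here $U_n\le\Gamma_0$ denotes the order-$8$ subgroup generated by the sign change of the first coordinate and the transposition of the first two coordinates, acting on $P_{12}$ exactly as in Section~\ref{sec: torus}. I then symmetrize, setting $Y := \sum_{\psi\in\Gamma_0}\hat\psi(Y_0)$. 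By construction $Y$ is $\Gamma_0$-invariant, and since each $\hat\psi$ permutes small squares up to sign, $Y$ is a combination of small-square cycle vectors only (no row or column cycles).

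The crux, which I expect to require the most care, is showing $Y\ne 0$, since a priori symmetrization could cancel. I would isolate the $P_{12}$-component, i.e.\ the coefficients on small squares with spanning directions $\{1,2\}$ located in $P_{12}$. A signed permutation sends a $\{1,2\}$-directed square to a $\{\sigma^{-1}(1),\sigma^{-1}(2)\}$-directed square, so only $\psi$ in the subgroup $W = U_n\cdot W_0$ stabilizing the direction-set $\{1,2\}$ contribute to this component, where $W_0$ is the group of signed permutations of the coordinates $\{3,\dots,m\}$. The factor $W_0$ fixes $Y_0$ pointwise (it leaves directions $1,2$ untouched and fixes $P_{12}$), and $Y_0$ is $U_n$-invariant, so the $P_{12}$-component of $Y$ equals $|W_0|\sum_{u\in U_n}\hat u(Y_0) = 8\,|W_0|\,Y_0\ne 0$. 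The key point is that distinct coordinate planes are permuted among themselves, so cancellation within $P_{12}$ is governed only by $U_n$, under which $Y_0$ is already invariant; hence $Y\ne 0$.

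Finally I would upgrade $Y$ to a projection, following Lemma~\ref{lem: invariantA} and Theorem~\ref{thm: invariantprojections} verbatim in the $m$-dimensional setting. Define $A\colon B(\mathbb{Z}_n^m)\to Z(\mathbb{Z}_n^m)$ on the cut-vector basis by $A(X(v)) = \hat T_v(Y)$. Consistency at the origin, $A(X(0)) = Y$, follows from $\sum_v X(v)=0$ together with $\sum_v \hat T_v(Y)=0$, the latter because summing any small square over all its translates gives $0$. Invariance $\hat g A = A\hat g$ for every automorphism $g$ follows from $\hat g(X(v))=X(g(v))$ (the extension of Lemma~\ref{lem: crossfixed}), the factorization $gT_v = T_{g(v)}u$ with $u\in\Gamma_0$, and the identity $\hat u(Y)=Y$. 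Then $P = \begin{bmatrix} I & A\\ 0 & 0\end{bmatrix}$ with respect to $\ell_2(E) = Z(\mathbb{Z}_n^m)\oplus B(\mathbb{Z}_n^m)$ is a $\Gamma(\mathbb{Z}_n^m)$-invariant projection onto $Z(\mathbb{Z}_n^m)$ with $A\ne 0$, hence distinct from $P_{\orth}$, which establishes the existence of nontrivial invariant projections.
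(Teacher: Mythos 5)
Your overall strategy is the same as the paper's: symmetrize a cycle vector over the stabilizer $\Gamma_0(\mathbb{Z}_n^m)$ of the origin to produce a nonzero $\Gamma_0$-invariant $Y\in Z(\mathbb{Z}_n^m)$, then promote it to an invariant projection via $A(X(v))=\hat{T}_v(Y)$ as in Lemma~\ref{lem: invariantA}; that second half of your argument is correct and matches the paper. The difference is the seed (you symmetrize a full $U_n$-invariant vector $Y_0$ supported in the coordinate plane $P_{12}$, while the paper symmetrizes a single small square positioned away from the coordinate axes) and, crucially, the non-vanishing argument, which you rightly call the crux and which as written has a gap.

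The gap is the claim that the $P_{12}$-component of $Y$ equals $|W_0|\sum_{u\in U_n}\hat{u}(Y_0)=8|W_0|Y_0$. Distinct coordinate planes through the origin are \emph{not} edge-disjoint: $P_{12}$ and $\psi(P_{12})=P_{kl}$ share all edges lying on the common coordinate axis whenever $\{k,l\}\cap\{1,2\}\neq\emptyset$. For instance, with $m=3$, $\sigma=(2\,3)$ and all $\alpha_j=1$, the vector $\hat{\psi}(Y_0)$ lives in $P_{13}$ but agrees with $Y_0$ on the first-axis edges; and $Y_0$ need not vanish there (by Lemma~\ref{lem: invariantvectors}(4) the coefficient of $Y_0$ on $e^R_{i,0}$ is $a(i,0)-a(i,n-1)=-2a(i,n-1)$, generically nonzero). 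So terms with $\psi\notin W$ do contribute to edges of $P_{12}$, and the alternative reading of ``component'' as coefficients on small squares is not well defined either, since small-square vectors are linearly dependent across planes. The conclusion $Y\neq 0$ is still true and the repair is small: evaluate $Y$ at one edge $e$ of $P_{12}$ \emph{not} lying on a coordinate axis. Only $\psi\in W$ can contribute there, so $\langle Y,e\rangle=8|W_0|\langle Y_0,e\rangle$, and $Y_0$ must be nonzero at some such $e$ because a vector of $Z_n$ supported on the two axes lies in ${\rm span}(R,K)$, which by the linear independence in Lemma~\ref{lem: bases} is impossible for a nonzero combination of the $S((i,j))$. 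This is essentially what the paper does: it seeds with a single square whose vertices avoid the axes and computes the coefficient of $w$ on one specific non-axis edge, obtaining $2^{m-2}(m-2)!\neq 0$ directly and thereby sidestepping the overlap issue.
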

\begin{proof} Let $S$ be the small square cycle oriented as  \begin{align*}
&(0,1,0,\dots,0) \rightarrow (1,1,0,\dots,0) \rightarrow (1,2,0,\dots,0) \rightarrow\\
&(0,2,0,\dots,0) \rightarrow (0,1,0,\dots,0)
\end{align*} and let $v_S \in Z(\mathbb{Z}_n^m)$ be the corresponding cycle vector. Let $$w = \sum_{\psi \in \Gamma_0(\mathbb{Z}_n^m)} \hat{\psi}(v_S).$$Using Proposition~\ref{prop: isometryZn},  the coefficient of $w$ on the edge vector $(1,1,0,\dots,0) \rightarrow (1,2,0,\dots,0)$ equals
$$\operatorname{card} \{\psi \in \Gamma_0(\mathbb{Z}_n^m) \colon \sigma(1)=1, \sigma(2) = 2, \alpha_1 = \alpha_2 =1\} =2^{m-2} (m-2)! \ne 0.$$
Hence $w \in Z(\mathbb{Z}_n^m)$ is a nontrivial $\Gamma_0(\mathbb{Z}_n^m)$-invariant vector.

 Let $X$  be the sum of all oriented edge vectors with initial vertex $(0,0,\dots,0)$. Note that $X$ is a $\Gamma_0(\mathbb{Z}_n^m)$-invariant
vector belonging to the cut space of $\mathbb{Z}_n^m$. The argument  given in Lemma~\ref{lem: invariantA} for $m=2$ readily extends to $m \ge 2$ to show that there
is a unique invariant projection $P$ onto $Z(\mathbb{Z}_n^m)$ satisfying $P(X) = w$.
\end{proof} \begin{remark} For $m >2 $, we do not  have a simple description of  the invariant projections similar to
Theorem~\ref{thm: invariantprojections}. This is related to the fact that we do not have a simple algebraic basis for $Z(\mathbb{Z}_n^m)$
analogous to the basis  of small square cycle vectors of Lemma~\ref{lem: bases} for $m=2$.
\end{remark}

\subsection{Hamming graphs}\label{S:HamGr}

 For $n
\ge 2$, let  $A_n = \{0,\dots,n-1\}$ be an `alphabet' of size $n$.
Equip $A_n^m$ with the Hamming metric: $$d((a_i)_{i=1}^m,
(b_i)_{i=1}^m) = \operatorname{card} \{ i \colon a_i \ne b_i\}.$$
We call  $A_n^m$  the \textit{Hamming graph} because we regard it
as the vertex set of a graph with edges
$\{(a_i)_{i=1}^m,(b_i)_{i=1}^m\}$ such that $d((a_i)_{i=1}^m,
(b_i)_{i=1}^m) =1$. Note that $A_n^m$ and $\mathbb{Z}_n^m$ are
isometric if and only if $n \le 3$. Hamming graphs is a well-known
object in algebraic combinatorics (see \cite[Section 9.2]{BCN89}).

Let $\Gamma(n,m)$ denote the group of graph automorphisms of
$A_n^m$ and let $\Gamma_0(n,m)$ denote its subgroup consisting of
the automorphisms which fix $(0,0,\dots,0)$.

\begin{lemma} Each $\phi \in \Gamma_0(n,m)$ is uniquely determined by a permutation $\sigma$ of $\{1,2,\dots,m\}$ and permutations $\phi_i$ ($1 \le i \le m$)
of $A_n$  such that  $\phi_i(0)=0$ and such that \begin{equation}
\label{eq: automorphofGamma(N,m)} \phi((a_i)_{i=1}^m) =
(b_i)_{i=1}^m, \quad \text{where}\quad  b_{\sigma(i)} =
\phi_i(a_i). \end{equation}
\end{lemma}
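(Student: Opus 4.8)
The plan is to set up a bijection between $\Gamma_0(n,m)$ and the set of data consisting of a permutation $\sigma$ of $\{1,\dots,m\}$ together with permutations $\phi_1,\dots,\phi_m$ of $A_n$ fixing $0$. First I would dispose of the easy \emph{forward} direction: given such data, the map $\phi$ defined by $b_{\sigma(i)}=\phi_i(a_i)$ is a bijection of $A_n^m$ (its inverse is built from $\sigma^{-1}$ and the $\phi_i^{-1}$), it fixes the origin $O=(0,\dots,0)$ because each $\phi_i$ fixes $0$, and it is a graph automorphism because two vertices $x,y$ differ in exactly one coordinate $i$ if and only if their images differ in exactly the coordinate $\sigma(i)$ (here using that each $\phi_i$ is injective, so $x_l\ne y_l \Leftrightarrow \phi_l(x_l)\ne\phi_l(y_l)$). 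Thus every datum produces an element of $\Gamma_0(n,m)$.

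For the substantive direction I would recover $\sigma$ and the $\phi_i$ from an arbitrary $\phi\in\Gamma_0(n,m)$ by examining its action on the neighbours of the origin. Since $\phi$ fixes $O$ it permutes the weight-one vertices $a e_i$ ($a\in A_n\setminus\{0\}$, $1\le i\le m$), where $a e_i$ denotes the vertex equal to $a$ in coordinate $i$ and $0$ elsewhere. Two distinct weight-one vertices are adjacent precisely when they share the same nonzero coordinate position; hence adjacency partitions the neighbours of $O$ into the $m$ \emph{directions}, and because $\phi$ preserves adjacency it carries each direction onto a single direction. I define $\sigma(i)$ to be the common direction of the vertices $\phi(a e_i)$, and then set $\phi(a e_i)=\phi_i(a)\,e_{\sigma(i)}$ together with $\phi_i(0)=0$, obtaining permutations $\phi_i$ of $A_n$ fixing $0$. (For $n=2$ each direction is the single vertex $e_i$, so $\phi_i=\mathrm{id}$ is forced and the partition is trivial.) These assignments are manifestly determined by $\phi$, which already gives the uniqueness of the data.

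It then remains to show that $\phi$ equals the automorphism $\psi$ produced from this data by the forward construction. Since $\rho:=\psi^{-1}\phi\in\Gamma_0(n,m)$ fixes $O$ and every weight-one vertex $a e_i$, the crux is the claim that \emph{an element $\rho\in\Gamma_0(n,m)$ fixing $O$ and all weight-one vertices is the identity}. I would prove this by induction on the Hamming weight $w(x)=d(x,O)$, the cases $w(x)\le 1$ being exactly the hypothesis. For $w(x)=k\ge 2$ I choose two coordinates $i\ne j$ in the support of $x$ and let $u,v$ be obtained from $x$ by zeroing coordinate $i$, respectively $j$; then $u,v$ have weight $k-1$ and lie at Hamming distance $2$, so their common neighbours are exactly $x$ and the weight-$(k-2)$ vertex $x'$ obtained by zeroing both coordinates. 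As $\rho$ fixes $u,v,x'$ by the inductive hypothesis and must permute the two-element common-neighbour set $\{x,x'\}$, it fixes $x$. This closes the induction, yields $\rho=\mathrm{id}$, and hence $\phi=\psi$.

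The main obstacle is the inductive step, and specifically the verification that two vertices at Hamming distance $2$ have exactly two common neighbours, both realized as the opposite ``corners'' of the square they span; once this square structure is pinned down, the fixed-point property propagates outward from the origin with no further difficulty. The only mild case distinction is $n=2$ (the hypercube), where directions are singletons and each $\phi_i$ is forced to be the identity, but the same induction applies verbatim.
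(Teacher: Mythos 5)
Your proof is correct and follows essentially the same route as the paper: the paper likewise pins down $\sigma$ and the $\phi_i$ from the action on vertices with at most one nonzero coordinate and then extends by ``a straightforward induction on $\operatorname{card}\{i : a_i \ne 0\}$'', which is exactly your induction on Hamming weight. Your write-up merely supplies the details the paper leaves implicit (the partition of the neighbours of the origin into directions, and the fact that a distance-$2$ pair has exactly two common neighbours), and these check out.
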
 \begin{proof} Since $\phi$ fixes $(0,0,\dots,0)$,  it follows that $\sigma$ and $\phi_i$ exist such that  \eqref{eq: automorphofGamma(N,m)}
holds provided $\operatorname{card}\{i \colon a_i \ne 0\} \le 1$.
Hence  \eqref{eq: automorphofGamma(N,m)} holds for all
$(a_i)_{i=1}^m$ by a straightforward induction on
$\operatorname{card}\{i \colon a_i \ne 0\}$.
\end{proof}
A  `small square' cycle is said to be $(i,j)$-parallel, where $1
\le i < j \le m$ if  the   vertices of the corresponding  square
$S$ are of the form:
$$(x_1,\dots,x_{i-1},\{a_1,a_2\},x_{i+1}\dots,x_{j-1},\{b_1,b_2\},x_{j+1},\dots,x_m).
$$  Let $v_S$ denote the corresponding cycle vector belonging to the cycle space  $Z(A_n^m))$.
A `small triangle'  cycle is said to be $i$-parallel if the
vertices of the corresponding  triangle $T$ are of the form
$$ (x_1,\dots,x_{i-1},\{a_1,a_2,a_3\},x_{i+1},\dots, x_m).$$
Let $v_T \in  Z(A_n^m)$ denote the corresponding cycle vector.

\begin{lemma} \label{lem: squaresandtrianglesspan}$Z(A_n^m)$ is spanned by the collection of signed indicator functions of all small square
and all small triangle cycles. \end{lemma}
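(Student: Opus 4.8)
The plan is to apply Proposition~\ref{P:GenCase}, which tells us that $Z(A_n^m)$ is spanned by the signed indicator functions of the cycles coming from relators in a presentation of the group $\mathbb{Z}_n^m$. However, the Hamming graph $A_n^m$ is \emph{not} a Cayley graph of $\mathbb{Z}_n^m$ when $n \ge 4$, since in $A_n^m$ every pair of symbols in the same coordinate is joined by an edge (so each coordinate-line is a complete graph $K_n$), whereas $\mathbb{Z}_n^m$ only joins cyclically adjacent symbols. So first I would reinterpret $A_n^m$ as the Cayley graph of a different group, or better, work directly with its natural presentation: the appropriate generating set $\x$ consists of the ``one-coordinate changes'' $\delta_i^{ab}$ that alter coordinate $i$ from $a$ to $b$, and the basic relators are those expressing commutativity of changes in distinct coordinates (giving square cycles) together with the relators expressing that within a single coordinate the changes $a \to b \to c \to a$ compose to the identity (giving triangle cycles).

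The key structural observation is that the edge set of $A_n^m$ decomposes according to which coordinate an edge changes, and within a fixed coordinate $i$ (with all other coordinates fixed to some values $x_1,\dots,x_{i-1},x_{i+1},\dots,x_m$), the induced subgraph is a complete graph $K_n$ on the $n$ symbols. So the first reduction is: the cycle space of $A_n^m$ is generated by (a) cycles living inside a single copy of $K_n$ — the $i$-parallel cycles — together with (b) the ``mixed'' square cycles that use two distinct coordinates. For part (b), the $(i,j)$-parallel small squares are exactly the commutativity relators and are cycles of the stated form. For part (a), I would invoke the standard fact that the cycle space of a complete graph $K_n$ is spanned by its triangles: every cycle in $K_n$ is a sum of triangle cycles, and every triangle is a ``small triangle'' in the Hamming sense (three vertices differing in one fixed coordinate). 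Thus the $i$-parallel small triangles span the contribution of each $K_n$-fiber.

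Concretely, the cleanest route is to set up the presentation and apply Proposition~\ref{P:GenCase} with $\x = \{\delta_i^{ab} : 1 \le i \le m,\ a \ne b \in A_n\}$, and relators of two types: the commutators $\delta_i^{ab}\delta_j^{cd}(\delta_i^{ab})^{-1}(\delta_j^{cd})^{-1}$ for $i \ne j$ (yielding squares) and the triangle relators $\delta_i^{ab}\delta_i^{bc}\delta_i^{ca}$ within one coordinate (yielding triangles). One must check that these relators, together with the trivial relators $\delta_i^{ab}(\delta_i^{ba})=1$ recording that $\x^{-1}=\x$, actually suffice to present the relevant group so that Proposition~\ref{P:GenCase} is applicable; equivalently, one checks directly that every cycle reduces to a combination of squares and triangles. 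The alternative, fully self-contained approach avoids presentations entirely: pick a spanning tree adapted to the product structure, and show that each fundamental cycle of a non-tree edge is a $\mathbb{Z}$-combination of small squares and small triangles by a dimension-free local argument.

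The main obstacle I anticipate is the mismatch between $A_n^m$ and the Cayley graph $\mathbb{Z}_n^m$: Proposition~\ref{P:GenCase} is phrased for Cayley graphs, and one must either identify $A_n^m$ as $\cay(\gr,\x)$ for an appropriate (non-abelian-looking, though in fact the vertex group is still $\prod \mathrm{Sym}$-generated) group and verify the relator set, or bypass the proposition and prove the spanning statement directly for $K_n$-fibers. The genuinely nontrivial content is the claim that triangles span the cycle space of each $K_n$ — this is classical but should be cited or quickly justified — and the verification that no further relators are needed beyond squares and triangles, i.e.\ that a closed walk changing coordinates arbitrarily can always be rewritten, via commutations (squares) and intra-coordinate reshuffling (triangles), into a product of such basic cycles. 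I expect this last reduction to require the careful cancellation bookkeeping already illustrated in the proof of Proposition~\ref{P:GenCase}.
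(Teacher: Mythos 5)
Your route (realize the Hamming graph as a Cayley graph, write down a presentation whose relators are the commutator squares and the one-coordinate triangles, and invoke Proposition~\ref{P:GenCase}) is genuinely different from the paper's, which never touches presentations: the paper argues directly by induction on the length of a cycle, using a small triangle to merge two consecutive edges that change the same coordinate, and a small square to shrink the gap between two non-consecutive edges changing the same coordinate, with a four-way case analysis according to whether the auxiliary vertex $\bar v$ lies on the cycle. Your approach would be shorter if completed, since for an abelian group the verification that the proposed relators present the group is easy; the paper's approach is self-contained and does not lean on the (somewhat informally proved) Proposition~\ref{P:GenCase}.

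However, as written your proposal has a gap and a factual error. The error: $A_n^m$ \emph{is} a Cayley graph of $\mathbb{Z}_n^m$ --- take the generating set $\x=\{c\,e_i:\ 1\le i\le m,\ c\in\mathbb{Z}_n\setminus\{0\}\}$ of all nonzero multiples of the standard basis vectors; two tuples are Hamming-adjacent iff their difference lies in $\x$. Your ``one-coordinate change $\delta_i^{ab}$'' is just $(b-a)e_i$, so there is no need to hunt for a different, ``$\prod\mathrm{Sym}$-generated'' vertex group, and insisting that the graph is not a Cayley graph of $\mathbb{Z}_n^m$ sends you off in the wrong direction. The gap: the whole content of the lemma is the assertion that squares and triangles suffice, i.e.\ (in your language) that the commutators $[c\,e_i,\,c'e_j]$ for $i\ne j$ and the relations $c\,e_i+c'e_i=(c+c')e_i$ form a complete set of defining relators, or (in the direct language) that an arbitrary cycle wandering among coordinates can be rewritten as a combination of fiber cycles and squares. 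You state this as ``the first reduction'' and again at the end as something you ``expect to require careful cancellation bookkeeping,'' but you never carry it out. Once the group is correctly identified as $\mathbb{Z}_n^m$ with the enlarged generating set, this verification is genuinely easy (the quotient of the free group on $\x$ by the normal closure of those relators is visibly $\mathbb{Z}_n^m$), so the gap is fillable --- but in the proposal as it stands the decisive step is announced rather than proved.
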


\begin{proof} Clearly, it suffices to prove the following
statement: Let $f\in Z(A_n^m)$ be the signed indicator function of
a directed cycle $C$ of length $k$. Then, adding to $f$ some
signed indicator functions of triangular and square cycles
described above, we get a sum of signed indicator functions of
directed cycles of length $\le k-1$ each.

We need to consider the following cases:

\begin{enumerate}[{\bf (1)}]

\item The cycle $C$ contains two consecutive edges for which the
ends differ in the same coordinate. Let the edges be
$\overrightarrow{uv}$ and $\overrightarrow{vw}$. In this case
$\overrightarrow{vu}, \overrightarrow{uw}$, and
$\overrightarrow{wv}$ form a triangle of the described type, and
its ``addition'' decreases the ``length'' of $C$.

\item Any two consecutive edges change different coordinates. Note
that each coordinate which ever changes in the cycle $C$, has to
be changed at least twice. Let $l$ be the smallest number of edges
between any two edges changing the same coordinate. Let
$\overrightarrow{uv}$ and $\overrightarrow{xy}$ be these two edges
changing the same coordinate, and $\overrightarrow{vw}$ be an edge
in $C$ on the shortest path from $v$ to $x$.

We prove the following statement (as is easy to see, it achieves
the goal described above). There is a square cycle of the
described type such that adding its signed indicator function to
$f$ we get one of the following four outcomes:

\begin{enumerate}[{\bf (a)}]

\item A signed indicator function of a cycle of length $k$ in
which the smallest number of edges between two edges changing the
same coordinate is equal to $l-1$,

\item Empty cycle,

\item A signed indicator function of a cycle of length $k-2$,

\item A union of two nontrivial cycles of total length $k$.

\end{enumerate}

In fact, without loss of generality, assume that

\[u=(x_1,x_2,x_3,\dots, x_m),\]
\[v=(\tilde x_1,x_2,x_3,\dots, x_m),\]
\[w=(\tilde x_1,\tilde x_2,x_3,\dots, x_m).\]

We introduce $\bar v=(x_1,\tilde x_2,x_3,\dots, x_m)$. There are
four possibilities:

\begin{enumerate}[{\bf (i)}]

\item $\bar v$ is not a vertex of the cycle $C$.

In this case, we add to $f$ the signed indicator function of the
cycle $w, v, u, \bar v, w$, and get a cycle of length $k$ in which
the  edge $\overrightarrow{\bar vw}$ and $\overrightarrow{xy}$
change the same coordinate, and the number of edges between them
is $l-1$.

\item $\bar v$ is a vertex of the cycle $C$, and the cycle has
four vertices  $w, v, u, \bar v, w$.

In this case, adding to $f$ the signed indicator function of the
cycle $w, v, u, \bar v, w$, we get $0$.

\item $\bar v$ is a vertex of the cycle $C$, and exactly one of
the edges $u\bar v$ and $\bar vw$ is an edge of $C$.

In this case, adding to $f$ the signed indicator function of the
cycle $w, v, u, \bar v, w$, we get the signed indicator function
of a cycle with $k-2$ edges.

\item $\bar v$ is a vertex of the cycle $C$, and none of the edges
$u\bar v$ and $\bar vw$ is an edge of $C$.

In this case, adding to $f$ the signed indicator function of the
cycle $w, v, u, \bar v, w$, we get the sum of signed indicator
function of two nontrivial cycles with total length of $k$ edges.

\end{enumerate}

\end{enumerate}

\end{proof}

\begin{lemma} \label{lem: v=0} Suppose $v \in  Z(A_n^m)$ is $\Gamma_0(n,m)$-invariant (i.e.,   $\hat{\phi}(v) = v$ for all $\phi \in \Gamma_0(n,m)$).
Then $v=0$. \end{lemma}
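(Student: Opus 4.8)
The plan is to run the orbit-averaging argument already used for $\mathbb{Z}_4^m$ and $\mathbb{Z}_n^m$. Writing $G:=\Gamma_0(n,m)$, I would form the averaging operator $\Pi:=\frac{1}{|G|}\sum_{\phi\in G}\hat\phi$, which maps $Z(A_n^m)$ into itself, satisfies $\Pi\hat\psi=\Pi$ for every $\psi\in G$, and fixes every $G$-invariant vector. It then suffices to show that $\Pi$ annihilates a spanning set of $Z(A_n^m)$; granting this, the given invariant $v$ obeys $v=\Pi(v)=0$. By Lemma~\ref{lem: squaresandtrianglesspan} the signed indicator functions $v_S$ of small squares and $v_T$ of small triangles span $Z(A_n^m)$, so it is enough to produce, for each such generator $g$, an automorphism $\psi_g\in G$ with $\hat\psi_g(g)=-g$: then $\Pi(g)=\Pi(\hat\psi_g(g))=-\Pi(g)$, forcing $\Pi(g)=0$. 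Constructing $\psi_g$ relies on the description of $G$ as those automorphisms given by a coordinate permutation $\sigma$ together with alphabet permutations $\phi_i$ each fixing $0$.

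For a triangle $T$ that is $i$-parallel on the values $\{a_1,a_2,a_3\}$ in coordinate $i$, I would take $\psi_T$ to be an alphabet transposition in coordinate $i$ that interchanges two of these values and fixes the third, acting as the identity on the remaining coordinates. Fixing one vertex of a $3$-cycle while swapping the other two reverses its cyclic orientation, so $\hat\psi_T(v_T)=-v_T$. Moreover the transposed pair can always be chosen to avoid the symbol $0$ (swap the two values different from $0$ when $0$ occurs among the $a_\ell$, and swap any two otherwise), so $\psi_T$ fixes the origin and lies in $G$. Thus triangles present no difficulty.

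For a square $S$ that is $(i,j)$-parallel on values $\{a_1,a_2\}$ in coordinate $i$ and $\{b_1,b_2\}$ in coordinate $j$, the first choice is again an alphabet transposition. If $0\notin\{a_1,a_2\}$ I would let $\psi_S$ swap $a_1\leftrightarrow a_2$ in coordinate $i$ (identity elsewhere), and symmetrically, if $0\notin\{b_1,b_2\}$, swap $b_1\leftrightarrow b_2$ in coordinate $j$. Such a transposition is a reflection of the square across one of its axes and hence reverses its boundary orientation, giving $\hat\psi_S(v_S)=-v_S$; since the transposed pair avoids $0$, the map fixes the origin.

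The one genuinely delicate case, and the step I expect to be the main obstacle, is the square for which $0\in\{a_1,a_2\}$ \emph{and} $0\in\{b_1,b_2\}$ simultaneously, since then neither of the two obvious alphabet transpositions fixes the origin. Relabeling so that $a_1=b_1=0$, $a_2=a$, $b_2=b$ with $a,b\ne 0$, I would instead use the \emph{diagonal reflection} realized by the coordinate transposition $\sigma=(i\,j)$ together with the alphabet transposition $(a\,b)$ in both coordinates $i$ and $j$ (identity on the rest); this is the Hamming analogue of the coordinate swap $\beta$ used on the two-dimensional torus. One checks that this map fixes the origin, fixes the diagonal vertices $(0,0)$ and $(a,b)$ of $S$, and interchanges $(a,0)$ with $(0,b)$, so it preserves $S$ and reverses its cyclic orientation, yielding $\hat\psi_S(v_S)=-v_S$. (When $a=b$ this degenerates to the pure coordinate swap, and squares occur only when $m\ge 2$, so this construction is always available in its range of validity.) With a reversing automorphism in hand for every generator, the averaging computation of the first paragraph gives $\Pi\equiv 0$ on $Z(A_n^m)$ and hence $v=0$.
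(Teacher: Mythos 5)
Your proposal is correct and follows essentially the same route as the paper: average over $\Gamma_0(n,m)$, reduce to the spanning set of small squares and triangles from Lemma~\ref{lem: squaresandtrianglesspan}, and for each generator exhibit an origin-fixing automorphism reversing its orientation, with the same case split (alphabet transposition avoiding $0$ when possible, and the coordinate swap combined with the transposition $(a\,b)$ in the case where $0$ occurs in both active coordinates). The paper's Case~I is exactly your ``delicate'' diagonal-reflection case, so nothing further is needed.
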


\begin{proof} Suppose $v$ is $\Gamma_0(n,m)$-invariant. Then $$\frac{1}{\operatorname{card}(\Gamma_0(n,m))} \sum_{\phi \in \Gamma_0(n,m)}
\hat{\phi}(v) = v.$$ So by Lemma \ref{lem:
squaresandtrianglesspan} it suffices to check that
$$ \sum_{\phi \in \Gamma_0(n,m)}\hat{\phi}(v_S)= \sum_{\phi \in \Gamma_0(n,m)}\hat{\phi}(v_T)=0$$
for all small squares $S$ and small triangles $T$.

Let $S$ be a small square. Without loss of generality, we may
assume that $S$ is (1,2)-parallel.

Case I:  $S$ has  edges oriented as follows: \begin{align*}
&(0,0,x_3,\dots,x_n) \rightarrow (a,0,x_3,\dots,x_n) \rightarrow
(a,b,x_3,\dots,x_n)\rightarrow\\&(0,b,x_3,\dots,x_n)\rightarrow
(0,0,x_3,\dots,x_n). \end{align*} where $a \ne 0, b\ne0$ and
$x_3,\dots,x_n$ are arbitrary.  Consider $\psi_S \in
\Gamma_0(n,m)$ corresponding to permutations  $\sigma$ and
$\psi_i$ ($1 \le i \le m$), where $\sigma$ is the transposition
which interchanges $1$ and $2$, and $\psi_1 = \psi_2$ is the
transposition which interchanges $a$ and $b$ if $a \ne b$ or  the
identity if $a = b$, and $\psi_i$ is the identity for $3 \le i \le
m$. Then $\psi_S$ maps $S$ onto the same square but with the
opposite orientation:
\begin{align*}
&(0,0,x_3,\dots,x_n) \rightarrow (0,b,x_3,\dots,x_n) \rightarrow
(a,b,x_3,\dots,x_n)\rightarrow\\&(a,0,x_3,\dots,x_n)\rightarrow
(0,0,x_3,\dots,x_n), \end{align*} So $\hat{\psi_S}(v_S) = -v_S$.
Hence, using the group property of $\Gamma_0(n,m)$,
\begin{align*} \sum_{\phi \in \Gamma_0(n,m)}
\hat{\phi}(v_S)&= \frac{1}{2}\sum_{\phi \in
\Gamma_0(n,m)}(\hat{\phi}(v_S) +
\hat{\phi}\hat{\psi_S}(v_S))\\&=\frac{1}{2}\sum_{\phi \in
\Gamma_0(N,m)} \hat{\phi}(v_S+ \hat{\psi_S}(v_S) )=0.\end{align*}

Case II: $S$ has edges oriented as follows:
 \begin{align*}
&(a_1,b_1,x_3,\dots,x_n) \rightarrow (a_2,b_1,x_3,\dots,x_n)
\rightarrow
(a_2,b_2,x_3,\dots,x_n)\rightarrow\\&(a_1,b_2,x_3,\dots,x_n)\rightarrow
(a_1,b_1,x_3,\dots,x_n), \end{align*} where $a_1 \ne 0$, $a_2 \ne
0$, and $x_3,\dots,x_n$ are arbitrary. Consider $\psi_S \in
\Gamma(n,m)$, where $\psi_1$ is the transposition $(a_1,a_2)$ and
$\psi_i$ ($2 \le i \le n$) and $\sigma$ are the identity. Then
$\hat{\psi_S}(v_S) = - v_S$ and hence $\sum_{\phi \in
\Gamma_0(n,m)}\hat{\phi}(v_S)=0$.

Let $T$ be a small triangle. Without loss of generality, $T$ is
$1$-parallel with oriented edges as follows:
$$(a_1,x_2,\dots,x_n) \rightarrow (a_2,x_2,\dots,x_n) \rightarrow (a_3,x_2,\dots,x_n)\rightarrow (a_1,x_2,\dots,x_n) ,$$
where $a_1 \ne 0$, $a_2 \ne 0$, and  $x_2,\dots,x_n$ are
arbitrary. Consider $\psi_T \in \Gamma(n,m)$, where $\psi_1$ is
the transposition $(a_1,a_2)$ and $\psi_i$ ($2 \le i \le n$) and
$\sigma$ are the identity. Then $\hat{\psi_T}(v_S) = - v_S$ and
hence $\sum_{\phi \in \Gamma(n,m)}\hat{\phi}(v_T)=0$.
\end{proof}

\begin{theorem} \label{thm: uniqueorth} The orthogonal projection $P_{\operatorname{orth}}$ is the unique $\Gamma(n,m)$-invariant projection onto $Z(A_n^m)$.
\end{theorem}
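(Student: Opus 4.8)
The plan is to reduce the uniqueness statement to the already-established Lemma~\ref{lem: v=0}, following the pattern used for $\mathbb{Z}_4^m$. First I would record the block structure of invariant projections. Every $\phi \in \Gamma(n,m)$ induces a signed permutation $\hat\phi$ of the edge basis, hence an orthogonal operator on $\ell_2(E)$ that preserves the orthogonal decomposition $\ell_2(E) = Z(A_n^m) \oplus B(A_n^m)$. Consequently, exactly as in the proofs of Theorems~\ref{T:InvProjEqSub} and \ref{thm: invariantprojections}, any $\Gamma(n,m)$-invariant projection $P$ onto $Z(A_n^m)$ has the form $P = \begin{bmatrix} I & A \\ 0 & 0 \end{bmatrix}$ with respect to this decomposition, where $A \colon B(A_n^m) \to Z(A_n^m)$ is $\Gamma(n,m)$-equivariant (the intertwining relation \eqref{E:Qintertw}). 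Thus $P = P_{\orth}$ if and only if $A = 0$, and the theorem reduces to showing that $A$ vanishes.

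Next I would exhibit a convenient $\Gamma_0(n,m)$-invariant cut vector. For each vertex $v$ let $X(v) \in B(A_n^m)$ be the signed indicator of the star cut $(\{v\}, A_n^m \setminus \{v\})$, i.e.\ the sum of the edge vectors incident to $v$ oriented away from $v$. A direct sign check (the Hamming-graph analogue of Lemma~\ref{lem: crossfixed}) shows $\hat\phi(X(v)) = X(\phi(v))$ for every $\phi \in \Gamma(n,m)$; in particular the star $X((0,\dots,0))$ at the origin is fixed by every element of $\Gamma_0(n,m)$. Applying $P$ and using invariance, $\hat\phi(P(X((0,\dots,0)))) = P(\hat\phi(X((0,\dots,0)))) = P(X((0,\dots,0)))$ for all $\phi \in \Gamma_0(n,m)$, so $P(X((0,\dots,0)))$ is a $\Gamma_0(n,m)$-invariant element of $Z(A_n^m)$. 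By Lemma~\ref{lem: v=0} it is therefore $0$, that is $A(X((0,\dots,0))) = P(X((0,\dots,0))) = 0$.

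Finally I would propagate this to all of $B(A_n^m)$ using vertex-transitivity. The Hamming graph is vertex-transitive: given any vertex $v = (a_i)_{i=1}^m$, the automorphism $\phi$ with trivial $\sigma$ and each $\phi_i$ a permutation of $A_n$ sending $0$ to $a_i$ carries $(0,\dots,0)$ to $v$. Hence $P(X(v)) = P(\hat\phi(X((0,\dots,0)))) = \hat\phi(P(X((0,\dots,0)))) = 0$ for every $v$. Since the star cuts $\{X(v) : v \in A_n^m\}$ span $B(A_n^m)$ (the subfamily $\{X(v): v \ne (0,\dots,0)\}$ being the standard basis of the cut space from Section~\ref{S:CycCut}), the equivariant map $A$ annihilates a spanning set, so $A = 0$ and $P = P_{\orth}$, proving uniqueness.

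I do not expect a genuine obstacle here, since the substantive content already resides in Lemma~\ref{lem: v=0}, whose proof rests on the spanning result of Lemma~\ref{lem: squaresandtrianglesspan}. The only points demanding care are the verification that invariant projections have the claimed block form---which requires that $\Gamma(n,m)$ act by orthogonal operators preserving both $Z(A_n^m)$ and $B(A_n^m)$---and the sign bookkeeping establishing $\hat\phi(X(v)) = X(\phi(v))$, after which the argument assembles mechanically.
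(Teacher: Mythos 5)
Your proposal is correct and follows essentially the same route as the paper's proof: take the star cut $X$ at the origin, note it is $\Gamma_0(n,m)$-invariant so $P(X)$ is a $\Gamma_0(n,m)$-invariant element of $Z(A_n^m)$, kill it with Lemma~\ref{lem: v=0}, and then use vertex-transitivity plus the fact that the star cuts span $B(A_n^m)$ to conclude $P$ annihilates the cut space. The extra block-structure discussion you include is harmless but not needed; the paper argues directly with $P$.
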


\begin{proof} Let $X$  be the sum of all oriented edge vectors with initial vertex $(0,0,\dots,0)$. Note that $X$ is a $\Gamma_0(n,m)$-invariant
vector belonging to the cut space of $A_n^m$.

Let $P$ be a $\Gamma(n,m)$-invariant projection onto  $Z(A_n^m)$.
Then, for all $\phi \in \Gamma_0(n,m)$,
$$ \hat{\phi}(P(X)) = P(\hat{\phi}(X)) = P(X).$$
So $P(X) \in Z(A_n^m)$ is $\Gamma_0(n,m)$-invariant. By
Lemma~\ref{lem: v=0}, $P(X)=0$. Let $a=(a_i)_{i=1}^m \in A_n^m$.
Clearly, $\Gamma(n,m)$ is transitive, so there exists $\psi_a \in
\Gamma(n,m)$ such that $\psi_a((0,0,\dots,0))= a$. Hence, by
$\Gamma(n,m)$-invariance,
$$P(\hat{\psi}_a(X))= \hat{\psi}_a(P(X))= \hat{\psi}_a(0) = 0.$$
Since $\{\hat{\psi}_a(X) \colon a \in A_n^m\}$ spans the cut
space, it follows that $P$ annihilates the cut space, i.e., $P =
P_{\operatorname{orth}}$.
\end{proof}

\begin{corollary}  \label{cor: 23} Let $m \ge 2$.  Then  $P_{\operatorname{orth}}$ is the unique invariant projection onto $Z(\mathbb{Z}_n^m)$ for $n=2$ and $n=3$.
\end{corollary}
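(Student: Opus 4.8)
The plan is to deduce Corollary~\ref{cor: 23} directly from Theorem~\ref{thm: uniqueorth} by exploiting the fact, already noted in the text, that $A_n^m$ and $\mathbb{Z}_n^m$ are isometric precisely when $n\le 3$. First I would make this isometry explicit at the level of graphs for $n\in\{2,3\}$. The graph $\mathbb{Z}_n^m$ is the Cartesian product of $m$ copies of the cycle $C_n$, while $A_n^m$ is the Cartesian product of $m$ copies of the complete graph $K_n$. Since $C_2=K_2$ and $C_3=K_3$, these products coincide, so $\mathbb{Z}_n^m\cong A_n^m$ as graphs for $n=2,3$. Concretely, for $n\le 3$ every nonzero residue in $\mathbb{Z}_n$ is a ``Hamming-distance-one'' difference, so two vertices of $\mathbb{Z}_n^m$ are adjacent exactly when they differ in a single coordinate, which is the defining adjacency of the Hamming graph $A_n^m$.

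Second, I would observe that every object occurring in the two statements is intrinsic to the (oriented) graph: the edge space $\red$, the cycle space $Z(G)$, the cut space $B(G)$, the automorphism group, and the orthogonal projection $P_{\operatorname{orth}}\colon\ell_2(E)\to Z(G)$. A graph isomorphism $A_n^m\to\mathbb{Z}_n^m$ therefore induces a signed-permutation isometry of the two edge spaces that carries $Z(A_n^m)$ onto $Z(\mathbb{Z}_n^m)$, conjugates $\Gamma(n,m)$ onto the automorphism group of $\mathbb{Z}_n^m$ acting on its edge space, and sends $P_{\operatorname{orth}}$ to $P_{\operatorname{orth}}$. Consequently the affine set of invariant projections onto the cycle space is transported bijectively, and the assertion ``the unique invariant projection is $P_{\operatorname{orth}}$'' passes verbatim from $A_n^m$ to $\mathbb{Z}_n^m$.

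Finally, I would invoke Theorem~\ref{thm: uniqueorth} with $n=2$ and $n=3$: it gives uniqueness of the invariant projection onto $Z(A_n^m)$, and transporting along the isomorphism yields the desired uniqueness for $Z(\mathbb{Z}_n^m)$.

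The main point requiring care---rather than a genuine obstacle---is the bookkeeping of edge orientations. The reference orientations fixed for $A_n^m$ and for $\mathbb{Z}_n^m$ need not correspond under the graph isomorphism. However, since isometries of $\ell_1(E)$ are precisely signed permutations of the edge basis, and since $Z$, $B$, and $P_{\operatorname{orth}}$ are unaffected by flipping the sign of a basis vector, any orientation mismatch is absorbed into a diagonal $\pm1$ operator. Hence the transfer is clean and the corollary follows with no further computation.
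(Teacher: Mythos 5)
Your proposal is correct and matches the paper's intent exactly: the corollary is stated without proof immediately after Theorem~\ref{thm: uniqueorth}, the justification being precisely the identification $\mathbb{Z}_n^m\cong A_n^m$ for $n\le 3$ (via $C_2=K_2$, $C_3=K_3$) already flagged in Section~\ref{sec: higherdimensions}. Your extra care about transporting the cycle space, the automorphism group, $P_{\operatorname{orth}}$, and the reference orientations only makes explicit what the paper leaves implicit.
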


\subsection{The Hamming Cube}

The main goal of this section is the following result.
\begin{theorem} \label{thm: normoforthoghammingcube} Let $Q_n$ be the orthogonal projection from the edge space of the Hamming cube $\mathbb{Z}_2^n$ onto the cut space. Then $\|Q_n\|_1 = \dfrac{n+1}{2}$.
\end{theorem}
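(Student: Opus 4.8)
The plan is to exploit the high symmetry of the Hamming cube to reduce the operator-norm computation to a single column, and then to diagonalize the relevant operator by Walsh characters. Since $Q_n$ is an \emph{orthogonal} projection, it commutes with every isometry of $\red$ induced by a graph automorphism; as $\au(\mathbb{Z}_2^n)$ acts transitively on the edge set, all columns of the matrix of $Q_n$ in the standard edge basis have equal $\ell_1$-norm. Because the $\ell_1\to\ell_1$ operator norm of a matrix is the maximum $\ell_1$-norm of a column, this gives $\|Q_n\|_1=\|Q_n(e)\|_1$ for any single edge vector $e$, exactly as in the transitivity argument used for the torus. I fix $e=\overrightarrow{\mathbf 0\,\mathbf e_1}$, the edge flipping the first coordinate.

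Next I would write $Q_n$ explicitly. Since $B=B(\mathbb{Z}_2^n)=(\ker D)^{\perp}$ is the image of $D^{T}$, the orthogonal projection onto it is $Q_n=D^{T}L^{+}D$, where $L=DD^{T}$ is the combinatorial Laplacian and $L^{+}$ its pseudoinverse. Thus $Q_n(e)=D^{T}\phi$ with $\phi=L^{+}(\1_{\mathbf e_1}-\1_{\mathbf 0})\in\rv$, and for any edge $f$ one has $Q_n(e)(f)=\phi(f^{+})-\phi(f^{-})$. Consequently
\[
\|Q_n(e)\|_1=\sum_{\{p,q\}\in E}|\phi(p)-\phi(q)|,
\]
so the whole problem reduces to understanding the potential $\phi$.

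To find $\phi$ I would diagonalize $L$ by the Walsh characters $\chi_S(x)=(-1)^{\sum_{i\in S}x_i}$, $S\subseteq\{1,\dots,n\}$, which satisfy $L\chi_S=2|S|\,\chi_S$, so that $L^{+}\chi_S=\tfrac1{2|S|}\chi_S$ for $S\ne\emptyset$. Expanding $\1_{\mathbf 0}$ and $\1_{\mathbf e_1}$ in this basis and applying $L^{+}$ exhibits $\phi$ as a sum over sets $S\ni 1$. Writing $S=\{1\}\cup T$ with $T\subseteq\{2,\dots,n\}$ and linearizing the denominators via $\tfrac1{1+|T|}=\int_0^1 u^{|T|}\,du$, the binomial theorem collapses the sum over $T$ and yields the closed form
\[
\phi(x)=-\frac{(-1)^{x_1}}{2^{n}}\,I(w),\qquad I(w):=\int_0^1(1-u)^{w}(1+u)^{\,n-1-w}\,du,
\]
where $w=\sum_{i\ge 2}x_i$ is the weight of $x$ on the last $n-1$ coordinates. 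I expect this closed-form evaluation of $\phi$ to be the technical heart of the argument.

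Finally I would compute the edge sum by grouping edges according to coordinate direction. For a direction-$1$ edge the two endpoints share the same $w$ but opposite signs of $(-1)^{x_1}$, giving $|\phi(p)-\phi(q)|=2^{-(n-1)}I(w)$ with multiplicity $\binom{n-1}{w}$; for a transverse edge $w$ changes by one and $x_1$ is fixed, giving $|\phi(p)-\phi(q)|=2^{-n}|I(w)-I(w+1)|$ with multiplicity $2\binom{n-2}{w}$ per direction. Since passing from $w$ to $w+1$ multiplies the integrand by $\tfrac{1-u}{1+u}\in[0,1]$, the function $I$ is strictly decreasing, so $|I(w)-I(w+1)|=I(w)-I(w+1)$; removing the absolute values is the only subtlety beyond the closed form for $\phi$. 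Pulling the binomial sums inside the integrals and applying the binomial theorem to $\big((1-u)+(1+u)\big)^{n-1}$ and its variants, the direction-$1$ edges contribute $2^{-(n-1)}\sum_w\binom{n-1}{w}I(w)=1$, while the $n-1$ transverse directions contribute $\tfrac{n-1}{2^{\,n-1}}\sum_w\binom{n-2}{w}\big(I(w)-I(w+1)\big)=\tfrac{n-1}{2}$. Adding these gives $\|Q_n\|_1=1+\tfrac{n-1}{2}=\tfrac{n+1}{2}$, as claimed.
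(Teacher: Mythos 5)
Your argument is correct, and it takes a genuinely different route from the paper. The paper works entirely in the edge basis: it names the three coefficient types $a_k,b_k,c_k$ of $Q_n(e_0)$, derives linear relations among them from translation symmetry and from orthogonality to small square cycles and to cut vectors $X(v)$, proves a three-term recurrence for $(b_k)$, and then needs a separate (somewhat delicate) induction to establish positivity and monotonicity of $(b_k)$ before it can drop the absolute values; the final evaluation splits $\|Q_n(e_0)\|_1=F(n)+2G(n)$ and computes $F(n)=1$ and $G(n)=\tfrac{n-1}{4}$ by pairing $Q_n(e_0)$ against explicitly exhibited cut-space vectors. You instead write $Q_n=D^{T}L^{+}D$, diagonalize the Laplacian by Walsh characters, and obtain the closed form $\phi(x)=-\tfrac{(-1)^{x_1}}{2^n}\int_0^1(1-u)^{w}(1+u)^{n-1-w}\,du$ for the potential; I checked that this reproduces the paper's values (e.g.\ $\langle Q_n(e_0),e_0\rangle=2^{-(n-1)}I(0)=\tfrac{2}{n}-\tfrac{1}{n2^{n-1}}$, matching the paper's $b_0$), that your edge multiplicities $\binom{n-1}{w}$ and $2\binom{n-2}{w}$ are right, and that the two binomial-theorem evaluations give $1$ and $\tfrac{n-1}{2}$. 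What your approach buys is an explicit formula for every entry of $Q_n(e_0)$ and an essentially free sign analysis (monotonicity of $I(w)$ replaces the paper's Lemma on positivity and decrease of $(b_k)$, which is the most technical step there); what the paper's approach buys is that it stays elementary and reuses the same orthogonality-to-cycles machinery developed for the torus, at the cost of the recurrence/induction. Both are complete; yours is arguably shorter once the Fourier setup is in place.
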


\subsubsection{Notation}

First, let us settle on some notation for $\mathbb{Z}_2^n$  to
simplify the discussion: \begin{enumerate} \item Let $V_n =
\{(\varepsilon_i)_{i=1}^n \colon \varepsilon_i \in\{0,1\}\}$
denote the vertex set and let $E_n$ denote the edge set. The group
of graph automorphisms is denoted $\Gamma_n$. (Note that the
notation introduced here differs from that of Section~\ref{sec:
torus}.) \item  A directed edge $e$ with initial vertex $v = e_-$
and terminal vertex $w = e_+$ will be denoted as
$\overrightarrow{vw}$ or $\overrightarrow{e_-e_+}$. We will also
use the same notation  $\overrightarrow{vw}$ to  denote the
corresponding edge vector belonging to $E(\mathbb{Z}_2^n)$. \item
The edge, cycle, and cut spaces are denoted $E(\mathbb{Z}_2^n)$,
$Z(\mathbb{Z}_2^n)$, and $B(\mathbb{Z}_2^n)$. \item We say that an
oriented edge $e$ is $j$-parallel if $\varepsilon_j =0$  at $e_-$
and $\varepsilon_j =1$  at $e_+$. \item Let $e_0$ be the edge with
initial vertex $ (0,0,\dots,0)$ and terminal vertex  $
(1,0,\dots,0)$. \item For $0 \le k \le n-1$, let $v_k$ denote any
vertex $(\varepsilon_i)_{i=1}^n$ such that
$$ \varepsilon_1 = 0\quad\text{and}\quad \operatorname{card} \{i \colon \varepsilon_i = 1\} = k.$$
For $1 \le k \le n$, let $w_k$ be any vertex  $(\varepsilon_i)_{i=1}^n$ such that
$$ \varepsilon_1 = 1\quad\text{and}\quad \operatorname{card} \{i \colon \varepsilon_i = 1\} = k.$$
\item There are three types of edges:  $$\overrightarrow{w_k w_{k+1}} \quad (1 \le k \le n-1);$$
  $$\overrightarrow{v_k w_{k+1}} \quad(0 \le k \le n-1);$$
$$\overrightarrow{v_k v_{k+1}} \quad (0 \le k \le n-2).$$
Note that $e_0$ is the unique edge of the form
$\overrightarrow{v_0w_1}$. We use arrows to indicate our choice of
the reference orientation of $\mathbb{Z}_2^n$.

We will investigate the basis expansion of $Q_n(e_0)$ with respect to the standard edge basis $E_n$ of $E(\mathbb{Z}_2^n)$. To that end, let us define the  basis coefficients corresponding to the three edge types:
$$ a_k = \langle Q_n(e_0), \overrightarrow{w_k w_{k+1}}\rangle \quad(1 \le k \le n-1),$$
$$ b_k = \langle Q_n(e_0), \overrightarrow{v_k w_{k+1}}\rangle \quad(0 \le k \le n-1),$$
$$c_k = \langle Q_n(e_0), \overrightarrow{v_k v_{k+1}}\rangle \quad(0 \le k \le n-2),$$
Note that, by symmetry, $a_k$ does not depend on the choice of
edge of type  $ \overrightarrow{w_k w_{k+1}}$, and similarly for
$b_k$ and $c_k$. More precisely, we use the fact that for any two
vectors of the same type there is an automorphism of
$\mathbb{Z}_2^n$ mapping one of them onto the other and fixing
$e_0$.

\item For each $v \in V_n$, let $X(v) \in B(\mathbb{Z}_2^n)$
denote the cut vector which is the sum of the $n$  edge vectors
emanating from $v$  oriented so that $v$ is the initial vertex.
\end{enumerate}

\subsubsection{Technical lemmas about the basis coefficients of
$Q_n(e_0)$} Throughout this section we assume that $n \ge 3$.
\begin{lemma} \label{lem: a_kc_k} $a_k  = -c_{k-1}$ for $1 \le k \le n-1$.
\end{lemma}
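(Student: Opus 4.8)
The statement to prove is Lemma about the coefficients of the orthogonal projection $Q_n(e_0)$ onto the cut space $B(\mathbb{Z}_2^n)$, specifically that $a_k = -c_{k-1}$ for $1 \le k \le n-1$. Here $a_k = \langle Q_n(e_0), \overrightarrow{w_k w_{k+1}}\rangle$ involves edges between vertices both having first coordinate $1$, while $c_{k-1} = \langle Q_n(e_0), \overrightarrow{v_{k-1} v_k}\rangle$ involves edges between vertices both having first coordinate $0$. The natural symmetry to exploit is the automorphism of $\mathbb{Z}_2^n$ that flips the first coordinate.

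Let me think about what this automorphism does. Define $\tau \in \Gamma_n$ by $\tau(\varepsilon_1, \varepsilon_2, \dots, \varepsilon_n) = (1-\varepsilon_1, \varepsilon_2, \dots, \varepsilon_n)$ — flipping only the first bit. This is a graph automorphism. It swaps $v$-type and $w$-type vertices: a vertex $v_k$ (first coordinate $0$, weight $k$) maps to a vertex with first coordinate $1$ and weight $k+1$, i.e. a $w_{k+1}$-type vertex, and a $w_k$ maps to a $v_{k-1}$-type vertex. Crucially, $\tau$ sends the edge $e_0 = \overrightarrow{v_0 w_1}$ to itself but reverses its orientation, so $\hat\tau(e_0) = -e_0$ as an edge vector.

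**The proof plan.** Since $Q_n$ is the orthogonal projection onto the cut space, it is $\Gamma_n$-invariant (as noted in the excerpt, orthogonal projections onto invariant subspaces commute with the group action). Thus $\hat\tau Q_n = Q_n \hat\tau$, giving $Q_n(e_0) = Q_n(-\hat\tau(e_0)) = -\hat\tau(Q_n(e_0))$, so $\hat\tau(Q_n(e_0)) = -Q_n(e_0)$. Now I would track how $\hat\tau$ acts on the relevant edge vectors and use the fact that $\hat\tau$ preserves the inner product (isometry of $\ell_2(E_n)$). The key computation is to verify that $\hat\tau$ maps an edge of type $\overrightarrow{v_{k-1}v_k}$ (a $c_{k-1}$-edge, both endpoints first-coordinate $0$) to an edge of type $\overrightarrow{w_k w_{k+1}}$ (an $a_k$-edge), possibly with a sign from orientation reversal. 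Specifically, $\overrightarrow{v_{k-1}v_k}$ has endpoints of weights $k-1$ and $k$ with first coordinate $0$; applying $\tau$ gives endpoints of weights $k$ and $k+1$ with first coordinate $1$, which is exactly an edge $\overrightarrow{w_k w_{k+1}}$, and I expect the orientation to be preserved (both are ``increasing weight in a non-first coordinate'' edges). Then
$$
c_{k-1} = \langle Q_n(e_0), \overrightarrow{v_{k-1}v_k}\rangle = \langle \hat\tau(Q_n(e_0)), \hat\tau(\overrightarrow{v_{k-1}v_k})\rangle = \langle -Q_n(e_0), \overrightarrow{w_k w_{k+1}}\rangle = -a_k,
$$
using orthogonality of $\hat\tau$ in the second equality and $\hat\tau(Q_n(e_0)) = -Q_n(e_0)$ together with the edge identification in the third.

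**The main obstacle.** The delicate point is getting the signs exactly right: I must confirm both that $\hat\tau(e_0) = -e_0$ (orientation reversal of the flipped edge) and that $\hat\tau$ preserves the orientation of a $\overrightarrow{v_{k-1}v_k}$ edge when mapping it to a $\overrightarrow{w_k w_{k+1}}$ edge. The reference orientation conventions (a $j$-parallel edge goes from $\varepsilon_j = 0$ to $\varepsilon_j = 1$) mean that edges of $v$-type and $w$-type both change a coordinate $j \ge 2$ in the same direction; since $\tau$ only touches the first coordinate, it respects this $j$-parallel orientation, so no extra sign appears there. The single sign flip comes entirely from $e_0$, which is the unique $1$-parallel edge at the base. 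Once the orbit-to-edge-type correspondence and these two sign facts are checked carefully, the identity $a_k = -c_{k-1}$ follows immediately from the displayed chain of equalities. I would also remark that the ``by symmetry'' independence of $a_k, b_k, c_k$ on the choice of representative edge (already justified in the Notation via transitivity of $\Gamma_n$ fixing $e_0$) guarantees the computation does not depend on which representative edges are chosen.
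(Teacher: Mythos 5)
Your proof is correct and is essentially identical to the paper's: your $\tau$ is exactly the translation $\phi(v)=v+(1,0,\dots,0)$ used in the paper, and both arguments combine $\Gamma_n$-invariance of $Q_n$, orthogonality of $\hat\phi$, the sign flip $\hat\phi(e_0)=-e_0$, and the fact that $\hat\phi$ carries $\overrightarrow{w_k w_{k+1}}$-type edges to $\overrightarrow{v_{k-1}v_k}$-type edges with orientation preserved. Your extra care about the orientation conventions is a sound elaboration of the same computation.
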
 \begin{proof} Consider the translation of $\mathbb{Z}_2^n$ given by  $\phi(v) = v + (1,0,\dots,0)$  ($v  \in \mathbb{Z}_2^n$).
Then $Q_n \hat{\phi} = \hat{\phi}Q_n$ by $\Gamma_n$-invariance of $Q_n$. Hence \begin{align*}
a_k &= \langle Q_n(e_0), \overrightarrow{w_k w_{k+1}} \rangle\\&=  \langle \hat{\phi}(Q_n(e_0)),\hat{\phi}( \overrightarrow{w_k w_{k+1}}) \rangle\\
&=  \langle Q_n(\hat{\phi}(e_0)), \overrightarrow{v_{k-1} v_{k}} \rangle\\&= \langle Q_n(-e_0), \overrightarrow{v_{k-1} v_{k}} \rangle\\
&= -c_{k-1}.
\end{align*}
\end{proof} \begin{lemma} \label{lem: b_0}  $b_0 = \langle Q_n(e_0), e_0 \rangle = \dfrac{2}{n} - \dfrac{1}{n 2^{n-1}}.$
\end{lemma}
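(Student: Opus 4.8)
The plan is to recognize $b_0=\langle Q_n(e_0),e_0\rangle$ as a diagonal entry of the orthogonal projection $Q_n$ in the standard edge basis, and to evaluate all such entries at once by computing the trace. Since the edge basis $E_n$ is orthonormal in $\ell_2(E_n)$, the number $\langle Q_n(e),e\rangle$ is exactly the $(e,e)$-diagonal entry of $Q_n$ in this basis, and $b_0$ is one of them. Summing over all edges gives $\operatorname{trace}(Q_n)$. Because $Q_n$ is the orthogonal projection onto the cut space $B(\mathbb{Z}_2^n)$, its trace equals its rank, which is $\operatorname{dim} B(\mathbb{Z}_2^n)=|V_n|-1=2^n-1$ by the dimension count for cut spaces established in Section~\ref{S:CycCut}.

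First I would show that every diagonal entry equals $b_0$, that is, $\langle Q_n(e),e\rangle=b_0$ for each edge $e$. The hypercube $\mathbb{Z}_2^n$ is edge-transitive, so for any $e$ there is a graph automorphism $\phi\in\Gamma_n$ carrying $e_0$ to $e$. The induced operator $\hat\phi$ is a signed permutation of $E_n$, hence an isometry of $\ell_2(E_n)$ leaving $B(\mathbb{Z}_2^n)$ invariant; therefore $\hat\phi Q_n=Q_n\hat\phi$. Since $\hat\phi(e_0)=\pm e$ and $\hat\phi$ is orthogonal,
\[
\langle Q_n(e),e\rangle=\langle Q_n(\hat\phi(e_0)),\hat\phi(e_0)\rangle=\langle \hat\phi Q_n(e_0),\hat\phi(e_0)\rangle=\langle Q_n(e_0),e_0\rangle=b_0,
\]
the sign being squared away. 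This is the same symmetry principle already used when $a_k,b_k,c_k$ were defined, now applied across all three edge types.

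Combining the two observations yields $|E_n|\cdot b_0=\operatorname{trace}(Q_n)=2^n-1$. Using $|E_n|=n2^{n-1}$ (each of the $2^n$ vertices has degree $n$), I would solve
\[
b_0=\frac{2^n-1}{n2^{n-1}}=\frac{2}{n}-\frac{1}{n2^{n-1}},
\]
which is the claimed value.

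The computation presents no real obstacle; the only points requiring care are the two structural facts invoked. The decisive one is the identity $\operatorname{trace}(Q_n)=\operatorname{rank}(Q_n)=\operatorname{dim} B(\mathbb{Z}_2^n)$, valid because $Q_n$ is an orthogonal projection expressed in an orthonormal basis, and it is here that the cut-space dimension formula does the essential work. (I note in passing that $\langle Q_n(e),e\rangle$ is precisely the effective resistance across the edge $e$, so the argument is a transparent instance of Foster's theorem; but this interpretation is not needed.) The edge-transitivity step is entirely standard for $\mathbb{Z}_2^n$.
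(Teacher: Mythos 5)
Your proof is correct and follows essentially the same route as the paper: both use edge-transitivity to see that all diagonal entries $\langle Q_n(f),f\rangle$ coincide, then identify their sum with $\operatorname{trace}(Q_n)=\operatorname{dim}B(\mathbb{Z}_2^n)=2^n-1$ and divide by $|E_n|=n2^{n-1}$. Your write-up merely spells out the symmetry and trace-equals-rank steps in more detail than the paper does.
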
  \begin{proof} By symmetry,  $\langle Q_n(f), f \rangle$ is independent of the choice of edge vector $f$. Hence \begin{align*}
\langle Q_n(e_0), e_0 \rangle &=  \frac{1}{\operatorname{card}(E_n)} \sum_{f \in E_n} \langle Q_n(f), f \rangle\\
&= \frac{\operatorname{trace}(Q_n)}{\operatorname{card}(E_n)}\\
&= \frac{\operatorname{dim}(B_n)}{\operatorname{card}(E_n)}\\
&= \frac{2^n-1}{n 2^{n-1}}\\ &= \frac{2}{n} - \frac{1}{n 2^{n-1}}.
\end{align*}
\end{proof} \begin{lemma} \label{lem: c_0} $c_0 = \dfrac{1}{n-1} - \dfrac{2}{n(n-1)} + \dfrac{1}{2^{n-1}n(n-1)}$.
\end{lemma}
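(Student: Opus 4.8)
The plan is to recognize $Q_n(e_0)$ as a flow and then to read off $c_0$ from a single conservation law at the origin. Since $Q_n$ is the orthogonal projection onto the cut space $B(\mathbb{Z}_2^n) = Z(\mathbb{Z}_2^n)^\perp$, the complementary operator $I - Q_n$ is the orthogonal projection onto the cycle space $Z(\mathbb{Z}_2^n) = \ker D$. Consequently $(I - Q_n)(e_0) \in \ker D$, so that $D(Q_n(e_0)) = D(e_0)$. Thus $F := Q_n(e_0)$ is a flow whose divergence coincides with that of the single edge $e_0 = \overrightarrow{v_0 w_1}$: by \eqref{E:DF} its divergence is $-1$ at the origin $v_0 = (0,\dots,0)$, $+1$ at $w_1 = (1,0,\dots,0)$, and $0$ at every other vertex.

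Next I would evaluate \eqref{E:DF} for $F$ at the origin. Every edge incident to $v_0$ has $v_0$ as its tail, because flipping any single coordinate of the all-zero vertex raises the number of $1$'s; hence in the reference orientation the origin is never a head. These $n$ edges are exactly $e_0$ (flipping coordinate $1$) together with the $n-1$ edges of type $\overrightarrow{v_0 v_1}$ (flipping a coordinate $j \ge 2$). By the symmetry used to define the coefficients (for any two edges of the same type there is an automorphism fixing $e_0$ and carrying one to the other), we have $\langle F, e_0\rangle = b_0$ while $\langle F, \overrightarrow{v_0 v_1}\rangle = c_0$ for each of the latter $n-1$ edges. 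Since no edge has its head at $v_0$, \eqref{E:DF} gives $(DF)(v_0) = -(b_0 + (n-1)c_0)$, and equating this with $-1$ yields the relation $b_0 + (n-1)c_0 = 1$.

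Finally I would solve $c_0 = (1 - b_0)/(n-1)$ and substitute the value $b_0 = \frac{2}{n} - \frac{1}{n2^{n-1}}$ supplied by Lemma~\ref{lem: b_0}. A short simplification, using the identity $\frac{1}{n-1} - \frac{2}{n(n-1)} = \frac{n-2}{n(n-1)}$, produces the claimed expression for $c_0$.

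The only genuine step is the first one: once $Q_n(e_0)$ is identified as a flow through the identity $D(Q_n(e_0)) = D(e_0)$, the remainder is a one-line Kirchhoff-type balance at the origin combined with the already-known value of $b_0$. I therefore do not anticipate any real obstacle; the care required is purely the bookkeeping of orientations in \eqref{E:DF} and verifying that all $n-1$ edges leaving the origin other than $e_0$ are of type $\overrightarrow{v_0 v_1}$ and hence carry the common coefficient $c_0$.
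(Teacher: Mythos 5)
Your proof is correct and is essentially the paper's argument in a different dialect: the paper pairs $Q_n(e_0)$ with the cut vector $X((0,\dots,0))$ (using that $e_0 - Q_n(e_0) \perp B(\mathbb{Z}_2^n)$) to get $1 = b_0 + (n-1)c_0$, and since the row of $D$ at a vertex is, up to sign, exactly that cut vector, your divergence identity $D(Q_n(e_0)) = D(e_0)$ evaluated at the origin is the same computation. Both then finish by substituting the value of $b_0$ from Lemma~\ref{lem: b_0}.
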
 \begin{proof} \begin{align*} 1 &= \langle X((0,0,\dots,0)), e_0)\rangle \\
&= \langle X((0,0,\dots,0)),  Q_n(e_0)\rangle \\ &= \langle e_0,  Q_n(e_0)\rangle  + (n-1) \langle \overrightarrow{v_0v_1}, Q_n(e_0) \rangle\\
&= b_0 + (n-1)c_0.
\end{align*} Hence, by Lemma~\ref{lem: b_0},
$$ c_0 = \frac{1-b_0}{n-1}=  \dfrac{1}{n-1} - \dfrac{2}{n(n-1)} + \dfrac{1}{2^{n-1}n(n-1)}.$$
\end{proof}

\begin{lemma} \label{lem: a_k}   $a_k = \dfrac{b_k - b_{k-1}}{2}$  for $1 \le k \le n-1$.
\end{lemma}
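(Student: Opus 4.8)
The plan is to exploit the defining property of the orthogonal projection onto the cut space: since $Q_n(e_0)$ lies in $B(\mathbb{Z}_2^n) = Z(\mathbb{Z}_2^n)^\perp$, it is orthogonal to the signed indicator function $\chi_C$ of every cycle $C$. The idea is to choose a single small square (a $4$-cycle) in $\mathbb{Z}_2^n$ whose four edges are of the three types appearing in the lemma, so that the equation $\langle Q_n(e_0), \chi_C \rangle = 0$ becomes a linear relation among $a_k$, $b_k$, $b_{k-1}$, and $c_{k-1}$.

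First I would fix a coordinate $j \in \{2,\dots,n\}$ together with a background configuration having $k-1$ ones placed among the coordinates $\{2,\dots,n\}\setminus\{j\}$; this is possible precisely when $k \le n-1$, which is the asserted range. Varying only coordinates $1$ and $j$ over the four possible patterns produces four vertices $P_{00}$, $P_{0j}$, $P_{1j}$, $P_{10}$, where the subscripts record the values in coordinates $1$ and $j$. These are of types $v_{k-1}$, $v_k$, $w_{k+1}$, $w_k$ respectively. Traversing the square in the order $P_{00} \to P_{0j} \to P_{1j} \to P_{10} \to P_{00}$ uses edges of types $\overrightarrow{v_{k-1}v_k}$, $\overrightarrow{v_k w_{k+1}}$, $\overrightarrow{w_k w_{k+1}}$, and $\overrightarrow{v_{k-1}w_k}$, whose inner products against $Q_n(e_0)$ are $c_{k-1}$, $b_k$, $a_k$, and $b_{k-1}$. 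By the symmetry remarks in the notation section, these coefficients do not depend on the particular square chosen.

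The next step is the sign bookkeeping, which I expect to be the only delicate part. Comparing the traversal direction of each edge with the reference orientation (tail carries a $0$, head a $1$ in the flipped coordinate), the first two edges agree with the reference orientation and the last two are reversed, so
$$\chi_C = \overrightarrow{v_{k-1}v_k} + \overrightarrow{v_k w_{k+1}} - \overrightarrow{w_k w_{k+1}} - \overrightarrow{v_{k-1}w_k}.$$
Taking the inner product with $Q_n(e_0)$ and using orthogonality to the cycle space gives
$$0 = \langle Q_n(e_0), \chi_C \rangle = c_{k-1} + b_k - a_k - b_{k-1},$$
that is, $a_k = c_{k-1} + b_k - b_{k-1}$.

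Finally I would invoke Lemma~\ref{lem: a_kc_k}, which gives $c_{k-1} = -a_k$. Substituting this into the relation above yields $a_k = -a_k + b_k - b_{k-1}$, hence $2a_k = b_k - b_{k-1}$, which is the desired identity. The whole argument is essentially one orthogonality relation combined with the earlier lemma; the main risk is an orientation error in writing down $\chi_C$, so I would verify each of the four edges against the reference orientation before reading off the signs.
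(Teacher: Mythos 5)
Your proof is correct and is essentially the paper's own argument: both take the inner product of $Q_n(e_0)$ with the signed indicator vector of the same small square on the types $v_{k-1}, w_k, w_{k+1}, v_k$ (you merely traverse it in the opposite direction, so your relation $c_{k-1}+b_k-a_k-b_{k-1}=0$ is the negative of the paper's $b_{k-1}+a_k-b_k-c_{k-1}=0$) and then substitute $c_{k-1}=-a_k$ from Lemma~\ref{lem: a_kc_k}. Your sign bookkeeping and the existence check for the square in the stated range of $k$ are both accurate.
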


\begin{proof} Consider a small square cycle with directed edges of the form
$$ v_{k-1} \rightarrow w_k \rightarrow w_{k+1} \rightarrow v_k \rightarrow v_{k-1}.$$
The corresponding cycle vector is  given by
$$\overrightarrow{v_{k-1}w_k}+\overrightarrow{w_{k}w_{k+1}}-\overrightarrow{v_{k}w_{k+1}}-\overrightarrow{v_{k-1}v_k}\in Z(\mathbb{Z}_2^n).$$
Since $Q_n(e_0) \in B(\mathbb{Z}_2^n)= Z(\mathbb{Z}_2^n)^\perp$,
we have
\begin{align*}0& = \langle Q_n(e_0), \overrightarrow{v_{k-1}w_k}+\overrightarrow{w_{k}w_{k+1}}-\overrightarrow{v_{k}w_{k+1}}-
\overrightarrow{v_{k-1}v_k}\rangle\\&= b_{k-1}+ a_k - b_k - c_{k-1}\\
&=  b_{k-1}+ a_k - b_k +a_{k}
\end{align*} by Lemma~\ref{lem: a_kc_k}. Hence the result follows.
\end{proof} \begin{lemma} \label{lem: b_1} $b_1 = \dfrac{2}{n(n-1)}-\dfrac{1}{2^{n-2}n(n-1)} - \dfrac{1}{n 2^{n-1}}$.
\end{lemma}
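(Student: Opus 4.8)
The plan is to reduce the computation of $b_1$ to the two previously established coefficients $b_0$ and $c_0$, and then simplify. The key point is that $b_1$ enters only through the small-square relation of Lemma~\ref{lem: a_k} together with the translation identity of Lemma~\ref{lem: a_kc_k}, and that both of these can be specialized to $k=1$, so no new geometric computation is required.

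First I would apply Lemma~\ref{lem: a_k} with $k=1$, which reads $a_1 = (b_1 - b_0)/2$, and rewrite it as $b_1 = b_0 + 2a_1$. Next I would invoke Lemma~\ref{lem: a_kc_k} with $k=1$ to substitute $a_1 = -c_0$. This collapses everything to the single linear relation
\[ b_1 = b_0 - 2c_0. \]
Thus the value of $b_1$ is already forced by the values of $b_0$ and $c_0$ obtained in the preceding lemmas.

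It then remains to insert the explicit formulas from Lemmas~\ref{lem: b_0} and \ref{lem: c_0} and to simplify. Writing out
\[ b_1 = \Bigl(\frac{2}{n} - \frac{1}{n2^{n-1}}\Bigr) - 2\Bigl(\frac{1}{n-1} - \frac{2}{n(n-1)} + \frac{1}{2^{n-1}n(n-1)}\Bigr), \]
the purely rational $n$-dependent part collapses via $\frac{2}{n} - \frac{2}{n-1} + \frac{4}{n(n-1)} = \frac{2}{n(n-1)}$, while the two dyadic terms combine as $-\frac{1}{n2^{n-1}} - \frac{1}{2^{n-2}n(n-1)}$, yielding exactly the asserted expression for $b_1$. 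The only thing to watch here is the elementary fraction bookkeeping in this last step; there is no genuine obstacle, since the whole statement is a one-line consequence of the four preceding lemmas.
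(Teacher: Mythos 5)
Your proposal is correct and coincides with the paper's own proof: the paper likewise writes $b_1 = 2a_1 + b_0$ from Lemma~\ref{lem: a_k}, substitutes $a_1 = -c_0$ via Lemma~\ref{lem: a_kc_k}, and then inserts the explicit values of $b_0$ and $c_0$ from Lemmas~\ref{lem: b_0} and \ref{lem: c_0}. The fraction bookkeeping you describe also checks out.
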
 \begin{proof} \begin{align*} b_1 &= 2a_1 + b_0\\ \intertext{(by  Lemma~\ref{lem: a_k})}
&= -2c_0+b_0  \\ \intertext{(by  Lemma~\ref{lem: a_kc_k})} &= -2[  \dfrac{1}{n-1} - \dfrac{2}{n(n-1)} + \dfrac{1}{2^{n-1}n(n-1)}]
+  \dfrac{2}{n} - \dfrac{1}{n 2^{n-1}}\\ \intertext{(by Lemmas~\ref{lem: c_0} and ~\ref{lem: b_0})}
&= \dfrac{2}{n(n-1)}-\dfrac{1}{2^{n-2}n(n-1)} - \dfrac{1}{n 2^{n-1}}.
\end{align*}  \end{proof}
\begin{lemma} \label{lem: a_krec}  For $2 \le k \le n-1$,
$$(n-k)a_k - (k-1)a_{k-1}-b_{k-1}=0.$$
\end{lemma}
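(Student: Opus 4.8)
The plan is to exploit the fact that $Q_n$ is the \emph{orthogonal} projection onto the cut space, hence self-adjoint and equal to the identity on $B(\mathbb{Z}_2^n)$. Consequently, for any vertex $v$, since $X(v) \in B(\mathbb{Z}_2^n)$, we have $\langle X(v), Q_n(e_0)\rangle = \langle Q_n(X(v)), e_0\rangle = \langle X(v), e_0\rangle$. The strategy, which mirrors the computation in Lemma~\ref{lem: c_0} but applied to a different test vector, is to use this identity with the cut vector $X(w_k)$ and to read off the recurrence by expanding both sides in the standard edge basis.

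First I would observe that for $2 \le k \le n-1$ the vertex $w_k$ (first coordinate $1$, total weight $k$) is neither endpoint of $e_0$, whose endpoints are $(0,\dots,0)$ and $(1,0,\dots,0)=w_1$. Since the edge vectors form an orthonormal basis, $\langle X(w_k), e_0\rangle$ is just the coefficient of $e_0$ in $X(w_k)$, which vanishes because $e_0$ is not incident to $w_k$. Thus it remains to expand $\langle X(w_k), Q_n(e_0)\rangle$ and show it equals $(n-k)a_k - (k-1)a_{k-1} - b_{k-1}$.

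Next I would enumerate the $n$ edges incident to $w_k$ and classify them by type, recording the sign with which each occurs in $X(w_k)$ (an incident edge contributes $+1$ if its reference orientation already has $w_k$ as tail, and $-1$ otherwise). Using that $w_k$ has exactly $k-1$ ones and $n-k$ zeros among coordinates $2,\dots,n$: flipping coordinate $1$ yields the single edge $\overrightarrow{v_{k-1}w_k}$ entering $w_k$ (sign $-1$); flipping one of the $k-1$ ones yields an edge $\overrightarrow{w_{k-1}w_k}$ entering $w_k$ (sign $-1$); and flipping one of the $n-k$ zeros yields an edge $\overrightarrow{w_k w_{k+1}}$ leaving $w_k$ (sign $+1$). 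Taking the inner product with $Q_n(e_0)$ and substituting the definitions of $b_{k-1}$, $a_{k-1}$, and $a_k$ then gives
$$ 0 = \langle X(w_k), Q_n(e_0)\rangle = -b_{k-1} - (k-1)a_{k-1} + (n-k)a_k, $$
which rearranges to the claimed recurrence.

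The argument is essentially routine, and no prior lemma of this section is needed. The only point requiring care --- the main (mild) obstacle --- is the bookkeeping in the edge count: verifying that $w_k$ has precisely $k-1$ neighbors of type $w_{k-1}$ and $n-k$ neighbors of type $w_{k+1}$ in the coordinate directions $2,\dots,n$, and matching each edge's orientation against the reference orientation so that the multiplicities $-(k-1)$ and $+(n-k)$ appear with the correct signs.
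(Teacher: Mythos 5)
Your proof is correct and follows essentially the same route as the paper: both test $Q_n(e_0)\in B(\mathbb{Z}_2^n)=Z(\mathbb{Z}_2^n)^\perp$ against the cut vector $X(w_k)$, use $\langle Q_n(e_0),X(w_k)\rangle=\langle e_0,X(w_k)\rangle=0$ since $e_0$ is not incident to $w_k$, and read off the recurrence from the same count of $n-k$ outgoing edges of type $\overrightarrow{w_kw_{k+1}}$, $k-1$ incoming of type $\overrightarrow{w_{k-1}w_k}$, and one incoming of type $\overrightarrow{v_{k-1}w_k}$. The sign bookkeeping matches the paper's convention for $X(v)$ exactly.
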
  \begin{proof} For $2 \le k \le n-1$, consider a cut vector of the form $X(w_k)$ for a particular choice of $w_k$.
There are $n-k$ edges of form $\overrightarrow{w_kw_{k+1}}$ with initial vertex $w_k$,  and $k-1$ edges of form $\overrightarrow{w_{k-1}w_k}$, and $1$ of form $\overrightarrow{v_{k-1}w_k}$, with terminal vertex $w_k$. Moreover, the support of  $X(w_k)$ is disjoint form $e_0$. Hence \begin{align*}
0 &= \langle e_0, X(w_k) \rangle \\
&=  \langle Q_n(e_0), X(w_k) \rangle \\
&= (n-k) \langle Q_n(e_0), \overrightarrow{w_kw_{k+1}}\rangle - (k-1) \langle Q_n(e_0), \overrightarrow{w_{k-1}w_{k}}\rangle  - \langle Q_n(e_0), \overrightarrow{v_{k-1}w_{k}}\rangle\\
&= (n-k)a_k - (k-1)a_{k-1} - b_{k-1}.
\end{align*}
 \end{proof}

 \begin{lemma} \label{lem: recurrence}
 For $2 \le k \le n-1$,
$$ (n-k)b_k - (n+1) b_{k-1} + (k-1)b_{k-2}=0.$$ \end{lemma}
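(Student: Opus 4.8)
The plan is to derive this recurrence purely as an algebraic consequence of the two relations already established for the coefficients $a_k$ and $b_k$, namely Lemma~\ref{lem: a_k} (which gives $a_k = (b_k - b_{k-1})/2$) and Lemma~\ref{lem: a_krec} (the three-term relation $(n-k)a_k - (k-1)a_{k-1} - b_{k-1} = 0$). No new geometric input about the Hamming cube is needed; everything follows from substitution. The idea is that Lemma~\ref{lem: a_krec} is a recurrence mixing the $a$'s and a single $b$, while Lemma~\ref{lem: a_k} lets us eliminate the $a$'s entirely in favor of differences of consecutive $b$'s, yielding a pure recurrence in the $b$'s.

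First I would record the two substitutions coming from Lemma~\ref{lem: a_k}, applied at index $k$ and at index $k-1$:
\begin{equation*}
a_k = \frac{b_k - b_{k-1}}{2}, \qquad a_{k-1} = \frac{b_{k-1} - b_{k-2}}{2}.
\end{equation*}
Next I would insert these into the relation of Lemma~\ref{lem: a_krec}, namely $(n-k)a_k - (k-1)a_{k-1} - b_{k-1} = 0$, and clear the denominator by multiplying through by $2$, obtaining
\begin{equation*}
(n-k)(b_k - b_{k-1}) - (k-1)(b_{k-1} - b_{k-2}) - 2 b_{k-1} = 0.
\end{equation*}

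The only step requiring the slightest care is collecting the coefficient of $b_{k-1}$: expanding gives a contribution of $-(n-k) - (k-1) - 2 = -(n+1)$, so that the expression collapses to exactly
\begin{equation*}
(n-k) b_k - (n+1) b_{k-1} + (k-1) b_{k-2} = 0,
\end{equation*}
which is the asserted identity. I do not expect any genuine obstacle here: the result is a short, routine elimination, and the main point is simply verifying that the three separate $b_{k-1}$-contributions combine to the clean coefficient $-(n+1)$. The indices work out since Lemma~\ref{lem: a_k} is valid for $1 \le k \le n-1$ (so both $a_k$ and $a_{k-1}$ are available when $2 \le k \le n-1$) and Lemma~\ref{lem: a_krec} holds precisely on the same range $2 \le k \le n-1$, matching the range claimed in the statement.
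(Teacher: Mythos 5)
Your proposal is correct and follows exactly the paper's own argument: substitute $a_k=(b_k-b_{k-1})/2$ and $a_{k-1}=(b_{k-1}-b_{k-2})/2$ from Lemma~\ref{lem: a_k} into the relation of Lemma~\ref{lem: a_krec} and collect terms, the coefficient of $b_{k-1}$ being $-(n-k)-(k-1)-2=-(n+1)$. The index ranges also match what the paper uses, so there is nothing to add.
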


\begin{proof} \begin{align*}
0 &= (n-k)a_k - (k-1)a_{k-1} - b_{k-1} \\ \intertext{(by
Lemma~\ref{lem: a_krec})} &= \frac{(n-k)}{2}(b_k - b_{k-1}) -
\frac{(k-1)}{2}(b_{k-1}-b_{k-2}) - b_{k-1}\\ \intertext{(by
Lemma~\ref{lem: a_k})} &=  \frac{1}{2} [ (n-k)b_k - (n+1) b_{k-1}
+ (k-1)b_{k-2}].\qedhere
\end{align*}
\end{proof}

\begin{lemma} \label{lem: b_{n-2}} $b_{n-2} = \dfrac{(n+1)}{n-1} b_{n-1}.$ \end{lemma}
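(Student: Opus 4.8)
The plan is to apply the cut-vector orthogonality relation that drives Lemma~\ref{lem: a_krec}, but at the ``top'' vertex $w_n=(1,1,\dots,1)$, where the upward edges degenerate away. Concretely, I would consider the cut vector $X(w_n)\in B(\mathbb{Z}_2^n)$, observe that its support consists only of the $n$ edges incident to $w_n$ and is therefore disjoint from $e_0$ (whose endpoints have weight $0$ and $1$), and conclude that $\langle e_0, X(w_n)\rangle=0$. Since $X(w_n)\in B_n$ and $Q_n$ is the orthogonal projection onto $B_n$, self-adjointness gives $\langle Q_n(e_0), X(w_n)\rangle = \langle e_0, Q_n(X(w_n))\rangle = \langle e_0, X(w_n)\rangle = 0$.

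Next I would read off the expansion of $X(w_n)$. Every neighbor of $w_n$ has weight $n-1$: flipping the first coordinate produces $v_{n-1}=(0,1,\dots,1)$, giving the single edge $\overrightarrow{v_{n-1}w_n}$, while flipping any of the remaining $n-1$ coordinates produces a vertex of the $w_{n-1}$ type, giving $n-1$ edges of the form $\overrightarrow{w_{n-1}w_n}$. In the reference orientation all of these edges point into $w_n$, so orienting them with $w_n$ as initial vertex (as in the definition of $X(v)$) contributes a minus sign to each. Pairing with $Q_n(e_0)$ and using the definitions of $a_{n-1}$ and $b_{n-1}$, the vanishing inner product becomes $-(n-1)a_{n-1}-b_{n-1}=0$, i.e.\ $a_{n-1}=-b_{n-1}/(n-1)$. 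I would remark that this is exactly the formal $k=n$ instance of Lemma~\ref{lem: a_krec}, in which the coefficient $n-k$ of the nonexistent upward edges vanishes.

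Finally, I would combine this with Lemma~\ref{lem: a_k} at $k=n-1$, namely $a_{n-1}=(b_{n-1}-b_{n-2})/2$. Substituting $a_{n-1}=-b_{n-1}/(n-1)$ and solving the resulting linear equation for $b_{n-2}$ yields $b_{n-2}=b_{n-1}+\tfrac{2}{n-1}b_{n-1}=\tfrac{n+1}{n-1}b_{n-1}$, as claimed. I do not anticipate a serious obstacle here: the only point requiring care is the bookkeeping at $w_n$, that is, correctly counting the single edge of type $\overrightarrow{v_{n-1}w_n}$ against the $n-1$ edges of type $\overrightarrow{w_{n-1}w_n}$ and tracking the sign introduced by reorienting the incident edges so that $w_n$ is the initial vertex.
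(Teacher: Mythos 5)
Your proof is correct and is essentially the paper's argument: the paper pairs $Q_n(e_0)$ with the cut vector $X(v_{n-1})$ at $(0,1,\dots,1)$ and converts $c_{n-2}$ to $a_{n-1}$ via Lemma~\ref{lem: a_kc_k}, while you pair with $X(w_n)$ at $(1,1,\dots,1)$ and land directly on the same relation $b_{n-1}+(n-1)a_{n-1}=0$. From there both arguments finish identically with Lemma~\ref{lem: a_k}.
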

\begin{proof} Note that $(0,1,1,\dots,1)$ is the unique vertex of form $v_{n-1}$. Since $e_0$ is not contained in the support of $X(v_{n-1})$,
 \begin{align*}
0& = \langle e_0, X(v_{n-1})\rangle \\
&= \langle Q_n( e_0), X(v_{n-1})\rangle \\
&=  \langle Q_n( e_0), \overrightarrow{v_{n-1}w_n} \rangle - (n-1)  \langle Q_n( e_0), \overrightarrow{v_{n-2}v_{n-1}} \rangle \\
&= b_{n-1} - (n-1)c_{n-2}\\
&= b_{n-1} + (n-1)a_{n-1}\\ \intertext{(by Lemma~\ref{lem: a_kc_k})}
&= b_{n-1} + \frac{(n-1)}{2} (b_{n-1}-b_{n-2})\\
\intertext{(by Lemma~\ref{lem: a_k})}
&= \frac{(n+1)}{2} b_{n-1} - \frac{(n-1)}{2} b_{n-2}.
\end{align*} Hence the result follows.
\end{proof} \begin{lemma} \label{lem: pos+dec} The sequence $(b_k)_{k=0}^{n-1}$ is  positive and strictly  decreasing.
\end{lemma}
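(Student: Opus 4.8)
The plan is to reduce everything to the second-order linear recurrence of Lemma~\ref{lem: recurrence}, namely $(n-k)b_k-(n+1)b_{k-1}+(k-1)b_{k-2}=0$ for $2\le k\le n-1$, together with the terminal relation $b_{n-2}=\frac{n+1}{n-1}b_{n-1}$ of Lemma~\ref{lem: b_{n-2}} and the sign information $b_0>0$ coming from the explicit value in Lemma~\ref{lem: b_0}. A direct forward induction from the known values $b_0,b_1$ is tempting but fails: establishing $b_k<b_{k-1}$ at step $k$ amounts to the strict ratio bound $(k-1)b_{k-2}>(k+1)b_{k-1}$ on the previous pair, and this bound is not self-propagating forward. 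The remedy I would use is to run the recurrence \emph{backward}, where the analogous implication does propagate, and to separate the (easy) monotonicity of the normalized solution from the (separate) determination of its overall sign.

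Concretely, first I would introduce a comparison sequence $(\beta_k)_{k=0}^{n-1}$ defined by the terminal data $\beta_{n-1}=n-1$, $\beta_{n-2}=n+1$ and the backward recurrence $\beta_{k-2}=\frac{(n+1)\beta_{k-1}-(n-k)\beta_k}{k-1}$ for $k=n-1,\dots,2$ (the divisors $k-1\ge1$ are harmless, and I never divide by a $\beta$). I claim $\beta_{k-1}>\beta_k>0$ for all $k$, proved by downward induction: the base case $k=n-1$ is $n+1>n-1>0$, and for the inductive step the key identity is
\[ \beta_{k-2}-\beta_{k-1}=\frac{(n-k+2)\beta_{k-1}-(n-k)\beta_k}{k-1}>\frac{2\beta_k}{k-1}>0, \]
where the last estimate uses $\beta_{k-1}>\beta_k>0$ and $n-k+2>0$. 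Thus $(\beta_k)$ is positive and strictly decreasing, with no sign ambiguity, because its top two values were chosen positive.

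Second, I would transfer this to $(b_k)$. Both $(b_k)$ and $(\beta_k)$ satisfy the same recurrence of Lemma~\ref{lem: recurrence}, which determines the entire sequence backward from its top pair; since Lemma~\ref{lem: b_{n-2}} gives $(b_{n-1},b_{n-2})=\frac{b_{n-1}}{n-1}(n-1,n+1)=\frac{b_{n-1}}{n-1}(\beta_{n-1},\beta_{n-2})$, proportionality of the top pairs forces $b_k=c\,\beta_k$ for all $k$, with $c=b_{n-1}/(n-1)$. Finally, evaluating at $k=0$ and invoking $b_0>0$ (Lemma~\ref{lem: b_0}) together with $\beta_0>0$ gives $c>0$; hence $b_k=c\beta_k>0$ and $b_{k-1}-b_k=c(\beta_{k-1}-\beta_k)>0$ for every $k$, which is exactly the assertion. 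The only real obstacle is the first, conceptual step: recognizing that the recurrence must be driven from the terminal relation rather than from $b_0,b_1$, and decoupling monotonicity (a property of the normalized backward solution $\beta$) from positivity (pinned down once and for all by the single inequality $b_0>0$). Once this is seen, every remaining computation is routine.
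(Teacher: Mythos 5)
Your proof is correct, but it takes a genuinely different route from the paper's. The paper also drives the argument from the terminal relation of Lemma~\ref{lem: b_{n-2}}, but it does so by contradiction: rewriting the recurrence of Lemma~\ref{lem: recurrence} in the convex-combination form \eqref{eq: convexcombo}, it shows that the ``bad'' state $b_k\le 0$, $b_k<b_{k-1}$ propagates forward to $k+1$ and eventually collides with $b_{n-2}=\frac{n+1}{n-1}b_{n-1}$, which proves positivity; strict decrease is then obtained separately by ruling out an interior local maximum with the same convex-combination identity. You instead \emph{solve} the recurrence: since the backward step $(\beta_k,\beta_{k-1})\mapsto\beta_{k-2}$ is well defined (the coefficient $k-1$ never vanishes) and linear, the whole sequence is a scalar multiple of the explicitly normalized solution $(\beta_k)$ with top pair $(n-1,n+1)$, whose positivity and strict monotonicity follow from a clean downward induction via the identity $\beta_{k-2}-\beta_{k-1}=\frac{(n-k+2)\beta_{k-1}-(n-k)\beta_k}{k-1}$; the sign of the multiplier is then pinned down by the single inequality $b_0>0$. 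I checked the computations (including the edge case $n=3$, where $(\beta_2,\beta_1,\beta_0)=(2,4,14)$ matches the explicit values of Lemmas~\ref{lem: b_0} and~\ref{lem: b_1}) and they are sound. What your approach buys: positivity and monotonicity come in one stroke, no iteration-to-contradiction is needed, and you use only $b_0>0$ rather than the pair $b_0>b_1>0$, so Lemma~\ref{lem: b_1} becomes dispensable for this lemma. What the paper's approach buys: it works directly with the $b_k$ and the convex-combination structure, with no auxiliary sequence and no appeal to uniqueness of solutions of the linear recurrence.
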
 \begin{proof} First we prove positivity.  By Lemmas~\ref{lem: b_0} and \ref{lem: b_1}, $$b_0 > b_1 >0.$$
Suppose, to derive a contradiction, that $b_k \le 0$ and $b_k < b_{k-1}$ for some $k$ satisfying  $2 \le k \le n-1$. Note that, in fact, $k \le n-2$ since by Lemma~\ref{lem: b_{n-2}} $$b_{n-2} = \dfrac{(n+1)}{n-1} b_{n-1},$$
which cannot hold if  $b_{n-1} \le 0$ and $b_{n-1}< b_{n-2}$.

By Lemma~\ref{lem: recurrence}, with $k$ replaced by $k+1$ (as we may since $k \le n-2$), \begin{equation} \label{eq: convexcombo}
 \frac{(n+1)}{n-1} b_k = \frac{(n-k-1)}{n-1} b_{k+1} + \frac{k}{n-1} b_{k-1}. \end{equation}
Note that  the right-hand side  is a proper convex combination of $b_{k-1}$ and $b_{k+1}$.
However, by assumption, $b_k \le 0$ and $b_k < b_{k-1}$, so
$$ \frac{(n+1)}{n-1} b_k \le b_k < b_{k-1}.$$ Hence, from the strict convexity of  \eqref{eq: convexcombo} and our assumption that $b_k \le 0$,
$$ 0 \ge b_k \ge \frac{(n+1)}{n-1} b_k  >  b_{k+1}.$$
It follows that $b_{k+1}$ satisfies the same assumptions as $b_k$. So, by iteration,
$$0 \ge b_k > b_{k+1}>\dots>b_{n-2} > b_{n-1},$$
which contradicts Lemma~\ref{lem: b_{n-2}}.  It follows that $b_k > 0$ for all $0 \le k \le n-1$.

Next, let us prove that $(b_k)_{k=0}^{n-1}$ is strictly  decreasing. By Lemma~\ref{lem: b_{n-2}} and positivity,
$$  b_{n-2} = \dfrac{(n+1)}{n-1} b_{n-1} > b_{n-1} > 0.$$
Hence, if $(b_k)_{k=0}^{n-1}$ is not  strictly  decreasing, then there exists  $k$
 satisfying $1 \le k \le n-2$ such that $b_k > b_{k+1}$ and $b_k \ge b_{k-1}$. But then by \eqref{eq: convexcombo}
$$0 < b_k <  \frac{(n+1)}{n-1} b_k \le \max(b_{k-1}, b_{k+1}),$$
which is a contradiction.
\end{proof}\begin{corollary} \label{lem: c_kpos}
$a_k < 0$ for $1 \le k \le n-1$ and $c_k>0$ for $0 \le k \le n-2$.
\end{corollary}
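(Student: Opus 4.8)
The plan is to derive both inequalities directly from the three preceding results, with essentially no new computation required; all of the analytic work has already been carried out in Lemma~\ref{lem: pos+dec}. The corollary is really just a translation of the strict monotonicity of $(b_k)$ into statements about $(a_k)$ and $(c_k)$ via the two linear identities relating these sequences.

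First I would establish the sign of $a_k$. By Lemma~\ref{lem: a_k} we have $a_k = (b_k - b_{k-1})/2$ for $1 \le k \le n-1$. Lemma~\ref{lem: pos+dec} asserts that the sequence $(b_k)_{k=0}^{n-1}$ is strictly decreasing, so $b_k < b_{k-1}$ holds throughout this range, and hence $a_k = (b_k - b_{k-1})/2 < 0$ for every $1 \le k \le n-1$. This disposes of the first half of the claim immediately.

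Next I would transfer this to the $c_k$ using Lemma~\ref{lem: a_kc_k}, which gives $a_k = -c_{k-1}$, equivalently $c_{k-1} = -a_k$, for $1 \le k \le n-1$. Re-indexing by setting $j = k-1$, this reads $c_j = -a_{j+1}$ for $0 \le j \le n-2$; since $a_{j+1} < 0$ by the previous paragraph (the subscript $j+1$ then ranges over $1,\dots,n-1$, exactly where negativity of $a$ has been shown), we conclude $c_j > 0$ for all $0 \le j \le n-2$.

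The only point requiring any care is the bookkeeping of index ranges, namely confirming that the shift coming from Lemma~\ref{lem: a_kc_k} covers precisely the range $0 \le k \le n-2$ claimed for the positivity of $(c_k)$. There is no genuine obstacle: the entire content of the corollary is an immediate consequence of Lemmas~\ref{lem: a_kc_k}, \ref{lem: a_k}, and \ref{lem: pos+dec}, the last of which carries all the difficulty.
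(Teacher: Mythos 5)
Your proposal is correct and is essentially identical to the paper's own proof: the paper also deduces $a_k=\tfrac12(b_k-b_{k-1})<0$ from Lemmas~\ref{lem: a_k} and \ref{lem: pos+dec}, and then $c_k=-a_{k+1}>0$ from Lemma~\ref{lem: a_kc_k}. Your extra attention to the index bookkeeping is fine but adds nothing beyond what the paper states.
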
 \begin{proof} By Lemmas~\ref{lem: a_k} and \ref{lem: pos+dec},
$$a_k = \frac{1}{2}(b_k - b_{k-1}) < 0,$$
and, by Lemma~\ref{lem: a_kc_k}, $c_k = -a_{k+1} > 0$.
\end{proof}

\subsubsection{Calculation of $\|Q_n\|_1$}
\begin{lemma} \label{lem: F(n)G(n)} $ \|Q_n\|_1 = F(n) + 2G(n)$, where
$$ F(n)  = \sum_{k=0}^{n-1} \binom{n-1}{k} b_k, \quad G(n) =  \sum_{k=0}^{n-2} (n-1-k) \binom{n-1}{k} c_k.$$
\end{lemma} \begin{proof}  Since the graph  automorphism  group of $\mathbb{Z}_2^n$ is edge-transitive, the distribution of $Q_n(e)$ is independent of $e \in E_n$. Hence
$$ \|Q_n\|_1 = \max_{e \in E_n} \|Q_n(e)\|_1 = \|Q_n(e_0)\|_1.$$
There are  $\binom{n-1}{k}$ edges of the form $\overrightarrow{v_k w_{k+1}}$, $(n-1-k) \binom{n-1}{k}$  of  form $\overrightarrow{v_k v_{k+1}}$,
and $(n-k)\binom{n-1}{k-1}$ of form $\overrightarrow{w_k w_{k+1}}$. Hence \begin{align*}
\|Q_n(e_0)\|_1 &= \sum_{k=0}^{n-1} \binom{n-1}{k} |b_k| + \sum_{k=0}^{n-2} (n-1-k)\binom{n-1}{k} |c_k|\\ &+  \sum_{k=0}^{n-2} (n-k)\binom{n-1}{k-1} |a_k|\\ &=  \sum_{k=0}^{n-1} \binom{n-1}{k} |b_k| +2 \sum_{k=0}^{n-2} (n-1-k)\binom{n-1}{k} |c_k|\\
\intertext{(by Lemma~\ref{lem: a_kc_k})}
&=  \sum_{k=0}^{n-1} \binom{n-1}{k} b_k +2 \sum_{k=0}^{n-2} (n-1-k)\binom{n-1}{k} c_k
\end{align*} by positivity of $b_k$ (Lemma~\ref{lem: pos+dec}) and $c_k$ (Lemma~\ref{lem: c_kpos}).
\end{proof} \begin{lemma} \label{lem: F(n)} $F(n)=1$.
\end{lemma}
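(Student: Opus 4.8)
The plan is to recognize $F(n)$ as a single inner product against a cut vector and then exploit the fact that $Q_n$ is the \emph{orthogonal} projection onto the cut space. First I would observe that, by the symmetry remark in the Notation subsection, $b_k=\langle Q_n(e_0),\overrightarrow{v_kw_{k+1}}\rangle$ is the common value of $\langle Q_n(e_0),f\rangle$ over all edges $f$ of the $1$-parallel type $\overrightarrow{v_kw_{k+1}}$, and that there are exactly $\binom{n-1}{k}$ such edges. Summing over $k$ therefore collapses $F(n)$ into one inner product:
$$F(n)=\sum_{k=0}^{n-1}\binom{n-1}{k}b_k=\langle Q_n(e_0),W\rangle,$$
where $W\in E(\mathbb{Z}_2^n)$ denotes the sum of the edge vectors of \emph{all} $1$-parallel edges, i.e.\ all edges whose endpoints differ in the first coordinate, oriented (as fixed above) from $\varepsilon_1=0$ to $\varepsilon_1=1$.

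The key step is to identify $W$ as a cut vector. Let $U=\{v\in V_n:\varepsilon_1(v)=1\}$. Since every edge of $\mathbb{Z}_2^n$ changes exactly one coordinate, the edges joining $U$ to its complement $\bar U$ are precisely the $1$-parallel edges, and the reference orientation places the head of each of them in $U$. Hence $W$ is exactly the signed indicator function of the cut $(U,\bar U)$, so $W\in B(\mathbb{Z}_2^n)$ by Definition~\ref{D:CycleCutSp}.

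Finally I would transfer the projection across the inner product. Because $Q_n$ is the orthogonal projection onto $B(\mathbb{Z}_2^n)$, it is self-adjoint and fixes $W$; therefore
$$F(n)=\langle Q_n(e_0),W\rangle=\langle e_0,Q_n(W)\rangle=\langle e_0,W\rangle.$$
Since $e_0=\overrightarrow{v_0w_1}$ is itself one of the $1$-parallel edges and the edge vectors form an orthonormal basis of $\ell_2(E_n)$, we have $\langle e_0,W\rangle=1$, giving $F(n)=1$.

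The only point requiring care — and the step I expect to be the main (if minor) obstacle — is the bookkeeping of orientations: one must check that every $1$-parallel edge appears in $W$ with coefficient $+1$ and that this sign agrees with the convention (heads in $U$) defining the cut $(U,\bar U)$, so that $W$ equals the cut vector \emph{exactly} rather than only up to signs. Once the orientation of $\mathbb{Z}_2^n$ fixed in the Notation subsection is invoked, this matching is immediate. Note that this approach bypasses the recurrences of Lemmas~\ref{lem: a_kc_k}--\ref{lem: b_{n-2}} entirely, relying only on self-adjointness of $Q_n$ and the cut-space membership of $W$.
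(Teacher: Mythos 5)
Your proof is correct and follows essentially the same route as the paper's: both collapse $F(n)$ into the single inner product $\langle Q_n(e_0),W\rangle$ with $W$ the sum of all $1$-parallel edge vectors, observe $W\in B(\mathbb{Z}_2^n)$, and finish using self-adjointness of the orthogonal projection. The only (immaterial) difference is that you certify $W\in B(\mathbb{Z}_2^n)$ by recognizing it as the cut vector of $(\{\varepsilon_1=1\},\{\varepsilon_1=0\})$, whereas the paper checks orthogonality to the small square cycle vectors; both are valid.
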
  \begin{proof}
 Let $x$ be the sum over all edge vectors of the form $\overrightarrow{v_k w_{k+1}}$ as $k$ ranges from $0$ to $n-1$. Note that $x$ is the sum over all $1$-parallel edge vectors. Hence $x$ is orthogonal to every small square cycle vector. So $x \in B(\mathbb{Z}_2^n)$, i.e., $Q_n(x)=x$.

Note that $F(n) = \sum_{u} \langle Q_n(e_0), u\rangle$, where $u$ ranges over all $1$-parallel edge vectors. Hence \begin{align*}
F(n) &= \sum_u  \langle Q_n(e_0), u\rangle \\
& = \sum_u  \langle e_0,Q_n( u) \rangle \\
&=  \langle e_0, Q_n( \sum_u  u) \rangle\\ &=  \langle e_0, Q_n(x) \rangle\\
&= \langle e_0, x \rangle\\ &=1.
\end{align*} \end{proof}
To state the next lemma, let us introduce some notation.  For $2 \le j \le n$:  \begin{enumerate} \item
let $x_j$ be the  sum over all $j$-parallel edge vectors of form $\overrightarrow{v_kv_{k+1}}$ (i.e., $\varepsilon_1 = 0$ at the initial and terminal vertices);
\item let $u_j$ be the sum over all $j$-parallel edge vectors of form $\overrightarrow{w_kw_{k+1}}$ (i.e., $\varepsilon_1 = 1$ at the initial and terminal vertices); \item let $y_j$ be the sum of all $1$-parallel edge vectors with $\varepsilon_j =0$ at the initial and terminal vertices;
\item let $z_j$ be the sum of all $1$-parallel edge vectors with $\varepsilon_j =1$ at the initial and terminal vertices.
\end{enumerate}
\begin{lemma} \label{lem: Q_n(x_j)} For $2 \le j \le n$, $Q_n(x_j) = \dfrac{3}{4} x_j + \dfrac{1}{4}(u_j + y_j - z_j)$. \end{lemma}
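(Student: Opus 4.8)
The plan is to verify directly that the vector
$$b := \tfrac34\, x_j + \tfrac14\bigl(u_j + y_j - z_j\bigr)$$
is the orthogonal projection of $x_j$ onto the cut space, by checking the two defining properties of the orthogonal decomposition: that $b$ lies in $B(\mathbb{Z}_2^n)$ and that $x_j - b$ lies in $Z(\mathbb{Z}_2^n)$. Since $Q_n$ is the orthogonal projection onto $B(\mathbb{Z}_2^n)$ and $B(\mathbb{Z}_2^n) = Z(\mathbb{Z}_2^n)^\perp$, uniqueness of the orthogonal decomposition $E(\mathbb{Z}_2^n) = Z(\mathbb{Z}_2^n)\oplus B(\mathbb{Z}_2^n)$ will then force $Q_n(x_j) = b$.

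First I would settle the membership $x_j - b \in Z(\mathbb{Z}_2^n)$. A one-line computation gives $x_j - b = \tfrac14\bigl(x_j - u_j - y_j + z_j\bigr)$, so it suffices to show that $x_j - u_j - y_j + z_j$ is a cycle vector. To see this I would fix the $n-2$ coordinates other than $1$ and $j$ and examine the four edges contributed by $x_j,u_j,y_j,z_j$ inside the resulting $(1,j)$-coordinate plane. Writing the four vertices of that plane as $(\varepsilon_1,\varepsilon_j)$, the edge of $x_j$ runs $(0,0)\to(0,1)$, that of $u_j$ runs $(1,0)\to(1,1)$, that of $y_j$ runs $(0,0)\to(1,0)$, and that of $z_j$ runs $(0,1)\to(1,1)$; with the signs $+x_j-u_j-y_j+z_j$ these assemble into the single directed small square cycle $(0,0)\to(0,1)\to(1,1)\to(1,0)\to(0,0)$. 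Summing over the $2^{n-2}$ choices of the remaining coordinates exhibits $x_j - u_j - y_j + z_j$ as a sum of small square cycle vectors, hence an element of $Z(\mathbb{Z}_2^n)$.

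Next I would prove $b \in B(\mathbb{Z}_2^n)$, equivalently $b \perp Z(\mathbb{Z}_2^n)$. Because $\mathbb{Z}_2^n$ has girth $4$ (there are no small triangles over a two-letter alphabet), Lemma~\ref{lem: squaresandtrianglesspan} tells us that $Z(\mathbb{Z}_2^n)$ is spanned by the small square cycle vectors $v_S$, so it is enough to check $\langle b, v_S\rangle = 0$ for every small square $S$. I would organize this by the pair of coordinates $\{p,q\}$ that $S$ varies. If $\{p,q\}$ contains neither $1$ nor $j$, no edge of $S$ lies in the support of $b$ and the inner product vanishes trivially. If $\{p,q\}$ contains exactly one of $1,j$, then the two parallel edges of $S$ in the relevant direction share the same value of the fixed special coordinate, hence carry equal coefficients in $b$, while they occur with opposite signs in $v_S$; the contribution therefore cancels. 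The remaining, genuinely computational, case is $\{p,q\}=\{1,j\}$: here all four edges of $S$ lie in the support of $b$, with coefficients $\tfrac34,\tfrac14,\tfrac14,-\tfrac14$, and evaluating against the alternating signs of $v_S$ yields $\tfrac34-\tfrac14-\tfrac14-\tfrac14 = 0$.

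The main obstacle is bookkeeping rather than conceptual. One must fix consistent reference orientations so that the coefficients of $b$ and the $\pm1$ pattern of $v_S$ are paired correctly in the $\{1,j\}$ case, and one must confirm that the equal-coefficient/opposite-sign cancellation in the mixed cases is not spoiled by the differing weights $\tfrac34$ and $\tfrac14$ — it is not, precisely because within a single square the two same-direction edges agree in the fixed special coordinate that distinguishes $x_j$ from $u_j$ (and $y_j$ from $z_j$). Once these sign conventions are pinned down, the two membership checks are routine, and together they identify $b$ as $Q_n(x_j)$.
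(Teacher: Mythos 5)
Your proof is correct and follows the same overall strategy as the paper: identify $b=\tfrac34 x_j+\tfrac14(u_j+y_j-z_j)$ as $Q_n(x_j)$ by verifying the two memberships $b\in B(\mathbb{Z}_2^n)$ and $x_j-b\in Z(\mathbb{Z}_2^n)$. Your verification of the cut-space membership is essentially the paper's: both arguments reduce to orthogonality against small square cycle vectors (legitimate, since by Lemma~\ref{lem: squaresandtrianglesspan} with no triangles present these span $Z(\mathbb{Z}_2^n)$), organized into the same three cases by which coordinates the square varies, with the same cancellations ($\tfrac34-\tfrac14-\tfrac14-\tfrac14=0$ in the $\{1,j\}$ case, equal coefficients against opposite signs in the mixed cases).

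Where you genuinely diverge is in the cycle-space membership \eqref{eq: cyclespace}. The paper checks that $x_j-u_j-y_j+z_j$ is orthogonal to every cut vector $X(v)$, running through four cases according to $(\varepsilon_1,\varepsilon_j)$. You instead exhibit $x_j-u_j-y_j+z_j$ explicitly as the sum, over the $2^{n-2}$ choices of the remaining coordinates, of the signed indicator functions of the small squares in the $(1,j)$-planes; your sign bookkeeping is right (the reference orientation sends each coordinate from $0$ to $1$, so the directed square $(0,0)\to(0,1)\to(1,1)\to(1,0)\to(0,0)$ contributes exactly $+x_j+z_j-u_j-y_j$ on each plane). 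This is a constructive rather than a dual verification: it is arguably more transparent, shows directly \emph{which} cycles make up $x_j-b$, and avoids invoking $Z=B^\perp$ a second time; the paper's version is more mechanical but requires no orientation bookkeeping beyond reading off signs of inner products. Either way the conclusion $Q_n(x_j)=b$ follows from the uniqueness of the orthogonal decomposition, as you say.
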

\begin{proof} It suffices to verify that \begin{equation} \label{eq: cutspace}
 \dfrac{3}{4} x_j + \dfrac{1}{4}(u_j + y_j - z_j) \in B(\mathbb{Z}_2^n) \end{equation}
and  \begin{equation} \label{eq: cyclespace} x_j - [\dfrac{3}{4} x_j + \dfrac{1}{4}(u_j + y_j - z_j)] = \frac{1}{4}(x_j - u_j - y_j + z_j) \in Z(\mathbb{Z}_2^n).
\end{equation}  To verify \eqref{eq: cutspace}, we check that  $\dfrac{3}{4} x_j + \dfrac{1}{4}(u_j + y_j - z_j)$ is orthogonal to every small square cycle vector.
It suffices to consider small square cycles with $1$-parallel or $j$-parallel (or both) edges as the support of $\dfrac{3}{4} x_j + \dfrac{1}{4}(u_j + y_j - z_j)$
is the union of the $1$-parallel and $j$-parallel edges:

Case 1: A small square cycle  $S$ with $1$-parallel and $i$-parallel edges, where $i \ne j$. Let $v_S$ be the corresponding cycle vector.

Case 1($\alpha$): If $\varepsilon_j =0$ on the edges of $S$, then $ x_j = u_j = z_j \equiv 0$ on the edges of $S$ and $y_j=1$ on the $1$-parallel pair of edges.  Hence
$$\langle  \dfrac{3}{4} x_j + \dfrac{1}{4}(u_j + y_j - z_j),  v_S \rangle = \frac{1}{4}\langle y_j, v_S \rangle = \frac{1}{4} - \frac{1}{4} = 0.$$

Case 1($\beta$):  If  If $\varepsilon_j =1$ on the edges of $S$, then $ x_j = u_j = y_j \equiv 0$ on the edges of $S$ and $z_j=1$ on the $1$-parallel pair of edges.  Hence
$$\langle  \dfrac{3}{4} x_j + \dfrac{1}{4}(u_j + y_j - z_j),  v_S \rangle = -\frac{1}{4}\langle z_j, v_S \rangle =- \frac{1}{4} + \frac{1}{4} = 0.$$

Case 2: A small square cycle  $S$ with $j$-parallel and $i$-parallel edges, where $i \ne 1$. Let $v_S$ be the corresponding cycle vector.

Case 2($\alpha$): If $\varepsilon_1 =0$ on the edges of $S$, then $  u_j =y_j=  z_j \equiv 0$ on the edges of $S$ and $x_j=1$ on the $j$-parallel pair of edges.  Hence
$$\langle  \dfrac{3}{4} x_j + \dfrac{1}{4}(u_j + y_j - z_j),  v_S \rangle = \frac{3}{4}\langle x_j, v_S \rangle = \frac{3}{4} - \frac{3}{4} = 0.$$

Case 2($\beta$):  If  If $\varepsilon_1 =1$ on the edges of $S$, then $ x_j  = y_j = z_j\equiv 0$ on the edges of $S$ and $u_j=1$ on the $j$-parallel pair of edges.  Hence
$$\langle  \dfrac{3}{4} x_j + \dfrac{1}{4}(u_j + y_j - z_j),  v_S \rangle = \frac{1}{4}\langle u_j, v_S \rangle = \frac{1}{4} - \frac{1}{4} = 0.$$

Cases 3: A small square cycle $S$ with $1$-parallel and $j$-parallel edges. First consider the $1$-parallel edges.
 On the $1$-parallel edge with $\varepsilon_j=0$, we have
$$ \dfrac{3}{4} x_j + \dfrac{1}{4}(u_j + y_j - z_j) = 0 +0 +\frac{1}{4} -0 = \frac{1}{4}$$
On the $1$-parallel edge with $\varepsilon_j=1$, we have
$$ \dfrac{3}{4} x_j + \dfrac{1}{4}(u_j + y_j - z_j) = 0 +0 +0 - \frac{1}{4}= -\frac{1}{4}$$
Now consider the $j$-parallel edges.  On the $j$-parallel edge with $\varepsilon_1=0$, we have
 $$ \dfrac{3}{4} x_j + \dfrac{1}{4}(u_j + y_j - z_j) = \frac{3}{4} +0 +0 -0 = \frac{3}{4}$$
 On the $j$-parallel edge with $\varepsilon_1=1$, we have
 $$ \dfrac{3}{4} x_j + \dfrac{1}{4}(u_j + y_j - z_j) = 0 +\frac{1}{4} +0 -0 = \frac{1}{4}.$$ Hence
$$\langle \dfrac{3}{4} x_j + \dfrac{1}{4}(u_j + y_j - z_j), v_S \rangle = \frac{1}{4}+ \frac{1}{4} -(-\frac{1}{4}) - \frac{3}{4} = 0.$$

To verify \eqref{eq: cyclespace}, we check that $x_j-u_j-y_j + z_j$ is orthogonal to each cut vector  $X(v)$ for $v = (\varepsilon_i)_{i=1}^n \in \mathbb{Z}_2^n$.

Case 1: $\varepsilon_1 = \varepsilon_j = 0$.

$$ \langle x_j-u_j-y_j + z_j, X(v) \rangle = 1\cdot 1 -0 -1\cdot1 +0 = 0.$$

Case 2: $\varepsilon_1 =0, \varepsilon_j = 1$.

$$ \langle x_j-u_j-y_j + z_j, X(v) \rangle = 1\cdot(-1)-0-0+ 1 \cdot 1 =0 = 0.$$

Case 3: $\varepsilon_1 =1, \varepsilon_j = 0$.

$$ \langle x_j-u_j-y_j + z_j, X(v) \rangle = 0 -1\cdot1 -1 \cdot (-1) +0 = 0.$$

Case 4: $\varepsilon_1 = \varepsilon_j = 1$.

$$ \langle x_j-u_j-y_j + z_j, X(v) \rangle =0- 1\cdot (-1) -0 + 1\cdot (-1)  = 0.$$
\end{proof}

\begin{lemma} \label{lem: G(n)} $G(n) = \dfrac{n-1}{4}$.
\end{lemma}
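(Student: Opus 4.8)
The plan is to recognize $G(n)$ as a single inner product and then invoke Lemma~\ref{lem: Q_n(x_j)}. First I would observe that, by the symmetry argument recorded in the notation, every edge vector of type $\overrightarrow{v_kv_{k+1}}$ contributes the same value $c_k=\langle Q_n(e_0),\overrightarrow{v_kv_{k+1}}\rangle$, and that there are exactly $(n-1-k)\binom{n-1}{k}$ such edges. Hence $G(n)=\sum_f\langle Q_n(e_0),f\rangle$, where $f$ runs over all edge vectors of type $\overrightarrow{v_kv_{k+1}}$, that is, over all edges whose endpoints both have $\varepsilon_1=0$. Every such edge is $j$-parallel for a unique $j\in\{2,\dots,n\}$, and for fixed $j$ the sum of these edge vectors is precisely $x_j$. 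Therefore $\sum_f f=\sum_{j=2}^n x_j$ and
$$G(n)=\Big\langle Q_n(e_0),\sum_{j=2}^n x_j\Big\rangle=\sum_{j=2}^n \langle Q_n(e_0),x_j\rangle.$$

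Next I would move $Q_n$ across the inner product using self-adjointness of the orthogonal projection, $\langle Q_n(e_0),x_j\rangle=\langle e_0,Q_n(x_j)\rangle$, and substitute the formula $Q_n(x_j)=\tfrac34 x_j+\tfrac14(u_j+y_j-z_j)$ from Lemma~\ref{lem: Q_n(x_j)}. The computation then reduces to deciding, for each of the four pieces, whether the fixed edge $e_0$ appears in its support. Since $e_0=\overrightarrow{v_0w_1}$ is $1$-parallel while $x_j$ and $u_j$ consist of $j$-parallel edges with $j\ge 2$, we have $\langle e_0,x_j\rangle=\langle e_0,u_j\rangle=0$. Moreover, at both endpoints of $e_0$ the coordinate $\varepsilon_j$ equals $0$ (all coordinates other than the first vanish along $e_0$), so $e_0$ is one of the $1$-parallel edges counted by $y_j$ but not by $z_j$; thus $\langle e_0,y_j\rangle=1$ and $\langle e_0,z_j\rangle=0$. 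Consequently $\langle e_0,Q_n(x_j)\rangle=\tfrac14$ for every $j$.

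Summing over the $n-1$ values $j=2,\dots,n$ then yields $G(n)=(n-1)\cdot\tfrac14=\tfrac{n-1}{4}$, as claimed. I expect the only delicate point to be the bookkeeping in the second paragraph: one must check carefully that, among the four summands $x_j,u_j,y_j,z_j$, the edge $e_0$ lies only in $y_j$, which hinges on $e_0$ being $1$-parallel with all higher coordinates zero. Everything else is a direct consequence of the self-adjointness of $Q_n$ together with Lemma~\ref{lem: Q_n(x_j)}.
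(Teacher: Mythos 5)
Your proof is correct and is essentially identical to the paper's: both rewrite $G(n)$ as $\langle Q_n(e_0),\sum_{j=2}^n x_j\rangle$, move $Q_n$ across the inner product by self-adjointness, apply Lemma~\ref{lem: Q_n(x_j)}, and observe that $e_0$ meets the support of $y_j$ only, giving $\tfrac14$ per value of $j$. No differences worth noting.
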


\begin{proof} Note that  $$\sum_{j=2}^n x_j = \sum_{\overrightarrow{v_k v_{k+1}}} e,$$ where the sum is taken over all edges $e$ of the form $\overrightarrow{v_k v_{k+1}}$.
Hence \begin{align*}
G(n) &=  \sum_{\overrightarrow{v_k v_{k+1}}} \langle Q_n(e_0),e \rangle \\
&= \langle Q_n(e_0),  \sum_{\overrightarrow{v_k v_{k+1}}} e \rangle\\
&= \langle Q_n(e_0), \sum_{j=2}^n x_j \rangle\\
&= \sum_{j=2}^n \langle e_0, Q_n(x_j) \rangle \\
& =  \sum_{j=2}^n \langle e_0,  \dfrac{3}{4} x_j + \dfrac{1}{4}(u_j + y_j - z_j) \rangle\\
\intertext{(by Lemma~\ref{lem: Q_n(x_j)})}
&= \sum_{j=2}^n (\frac{3}{4} \langle e_0,x_j\rangle + \frac{1}{4}[\langle e_0, u_j \rangle +\langle e_0, y_j \rangle-\langle e_0, z_j \rangle])\\
& = \sum_{j=2}^n[ 0 +0 + \frac{1}{4} - 0]\\
&= \frac{n-1}{4}.
\end{align*}
\end{proof} \begin{proof}[Proof of Theorem~\ref{thm: normoforthoghammingcube}]
By Lemmas~\ref{lem: F(n)G(n)}, \ref{lem: F(n)}, and  \ref{lem: G(n)},
$$ \|Q_n\|_1 = F(n) + 2G(n) = 1 + 2 \frac{(n-1)}{4} =  \frac{n+1}{2}.$$
\end{proof} \begin{corollary} Let $P_n$ be the orthogonal projection onto $Z(\mathbb{Z}_2^n)$. Then
$$\|P_n\|_1 = \frac{n+3}{2} - \frac{4}{n} + \frac{1}{n2^{n-2}}.$$
\end{corollary}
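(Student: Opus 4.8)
The plan is to exploit the orthogonal decomposition $\ell_2(E_n) = Z(\mathbb{Z}_2^n) \oplus B(\mathbb{Z}_2^n)$, under which $P_n = I - Q_n$, where $Q_n$ is the orthogonal projection onto the cut space whose $\ell_1$-norm was computed in Theorem~\ref{thm: normoforthoghammingcube}. Since the automorphism group of $\mathbb{Z}_2^n$ is edge-transitive, the distribution of $P_n(e)$ is independent of $e$, so $\|P_n\|_1 = \|P_n(e_0)\|_1 = \|e_0 - Q_n(e_0)\|_1$. I would then evaluate this $\ell_1$-norm by comparing it coordinate-by-coordinate with $\|Q_n(e_0)\|_1$, which is already known.

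The key observation is that $e_0$ has a single nonzero coordinate, namely on the edge $e_0$ itself, which is the unique edge of type $\overrightarrow{v_0 w_1}$ and therefore carries the coefficient $b_0$ of $Q_n(e_0)$. Consequently, on every edge other than $e_0$ the coordinate of $P_n(e_0) = e_0 - Q_n(e_0)$ is exactly the negative of the corresponding coordinate of $Q_n(e_0)$, so the two vectors agree in absolute value off $e_0$. On the edge $e_0$ the coordinate of $P_n(e_0)$ is $1 - b_0$, whereas that of $Q_n(e_0)$ is $b_0$. Using $b_0 > 0$ and $b_0 < 1$ (both immediate from Lemma~\ref{lem: b_0}, since $b_0 = \tfrac{2}{n} - \tfrac{1}{n2^{n-1}} \le \tfrac{2}{n} < 1$ for $n \ge 3$), this yields
\[ \|P_n(e_0)\|_1 = (1 - b_0) + \bigl(\|Q_n(e_0)\|_1 - b_0\bigr) = \|Q_n\|_1 + 1 - 2b_0. \]

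Finally I would substitute $\|Q_n\|_1 = \tfrac{n+1}{2}$ from Theorem~\ref{thm: normoforthoghammingcube} and $b_0 = \tfrac{2}{n} - \tfrac{1}{n2^{n-1}}$ from Lemma~\ref{lem: b_0}, and simplify:
\[ \|P_n\|_1 = \frac{n+1}{2} + 1 - 2\left(\frac{2}{n} - \frac{1}{n2^{n-1}}\right) = \frac{n+3}{2} - \frac{4}{n} + \frac{1}{n2^{n-2}}. \]
There is no real obstacle here; the only point requiring care is the sign bookkeeping on the single coordinate $e_0$, namely replacing $|1 - b_0|$ by $1 - b_0$ and $|b_0|$ by $b_0$, for which the positivity of $b_0$ and the bound $b_0 < 1$ suffice. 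All of the genuine difficulty—the exact evaluation of $\|Q_n\|_1$—has already been dispatched in the preceding theorem, so this corollary is essentially a one-line consequence together with the value of $b_0$.
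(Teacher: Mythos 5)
Your proof is correct and follows essentially the same route as the paper: write $P_n = I - Q_n$, use edge-transitivity to reduce to $\|e_0 - Q_n(e_0)\|_1$, observe that the coordinates differ only at $e_0$ so that $\|P_n\|_1 = \|Q_n\|_1 + 1 - 2b_0$, and substitute. Your explicit justification of the sign bookkeeping via $0 < b_0 < 1$ is a small point the paper leaves implicit, but the argument is the same.
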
 \begin{proof} \begin{align*} \|P_n\|_1 &= \|I - Q_n\|_1\\
& = \sum_{e \in E_n} |\langle e_0 - Q_n(e_0), e \rangle|\\
&=  \sum_{e \in E_n} |\langle Q_n(e_0), e \rangle| +1 - 2\langle Q_n(e_0), e_0 \rangle\\
&= \|Q_n\|_1 +1 - 2b_0\\ &= \frac{n+1}{2} +1 - 2( \dfrac{2}{n} - \dfrac{1}{n 2^{n-1}})\\
\intertext{(by Lemma~\ref{lem: b_0} and Theorem~\ref{thm: normoforthoghammingcube})}
&=  \frac{n+3}{2} - \frac{4}{n} + \frac{1}{n2^{n-2}}.
\end{align*}
\end{proof}

\subsubsection{The projection constant of
$\operatorname{Lip}_0(\mathbb{Z}_2^n)$}

\begin{lemma} Let $G=(V,E)$ be a finite connected graph with edge space $E(G)$ and  automorphism group $\Gamma$.   Then $\hat{\phi}^* = \hat{\phi}^{-1}$
for all $\phi \in \Gamma$.
\end{lemma}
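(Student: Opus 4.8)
The plan is to show that $\hat{\phi}$ is an orthogonal operator on the edge space $E(G)$ equipped with the standard inner product, and then to deduce the statement from the elementary identity that an orthogonal operator has adjoint equal to inverse. Indeed, once we know $\langle \hat{\phi}x,\hat{\phi}y\rangle=\langle x,y\rangle$ for all $x,y\in E(G)$, this rewrites as $\langle \hat{\phi}^*\hat{\phi}x,y\rangle=\langle x,y\rangle$, whence $\hat{\phi}^*\hat{\phi}=I$; since $\hat{\phi}$ is invertible (it is induced by the invertible map $\phi$), we conclude $\hat{\phi}^*=\hat{\phi}^{-1}$.

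The heart of the argument is therefore to verify orthogonality, and for this I would make the action of $\hat{\phi}$ on the standard basis explicit. The standard basis of $E(G)$ consists of the indicator vectors $\chi_e$ of the oriented edges $e\in E$, and this basis is orthonormal. A graph automorphism $\phi$ permutes the (unoriented) edges of $G$; relative to the fixed reference orientation, $\phi$ carries the edge $e$ to the edge $\phi(e)$ either preserving or reversing the chosen orientation. Recording this by a sign $\varepsilon(e)=\pm1$, we have
$$\hat{\phi}(\chi_e)=\varepsilon(e)\,\chi_{\phi(e)}\qquad(e\in E).$$
Thus $\hat{\phi}$ is a \emph{signed permutation} of the orthonormal basis $\{\chi_e\}_{e\in E}$, exactly as in the observation recorded earlier in the paper that isometries of $\ell_1(E)$ are signed permutations of the unit vector basis and may therefore be viewed as orthogonal operators on $\ell_2(E)$.

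It then remains to note that any signed permutation of an orthonormal basis is an orthogonal operator: it sends the orthonormal family $\{\chi_e\}$ to the family $\{\varepsilon(e)\chi_{\phi(e)}\}$, which is again orthonormal because $\phi$ is a bijection of $E$ and each sign $\varepsilon(e)$ has modulus $1$. Hence $\hat{\phi}$ preserves the inner product, completing the deduction $\hat{\phi}^*=\hat{\phi}^{-1}$. I do not anticipate any genuine obstacle here; the only point demanding a little care is the bookkeeping of the orientation signs $\varepsilon(e)$ in the formula for $\hat{\phi}(\chi_e)$, and this is entirely routine once the reference orientation is fixed.
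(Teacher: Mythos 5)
Your proof is correct and rests on the same essential fact as the paper's: $\hat{\phi}$ is a signed permutation of the orthonormal edge basis $\{\overrightarrow{e_-e_+}\}_{e\in E}$, with signs recording whether $\phi$ preserves or reverses the reference orientation. The paper reaches the conclusion by expanding $\hat{\phi}(x)$ in this basis and computing $\hat{\phi}^*$ directly from $\langle\hat{\phi}(x),y\rangle$, whereas you first record orthogonality and then invoke the standard identity $\hat{\phi}^*\hat{\phi}=I$; this is a presentational difference only, and both arguments are complete.
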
 \begin{proof} For all $x \in E(G)$,
$$ \hat{\phi}(x)= \sum_{e \in E} \langle x, \overrightarrow{e_-e_+}\rangle \overrightarrow{\phi(e_-)\phi(e_+)} = \sum_{e \in E}
 \langle x,  \overrightarrow{\phi^{-1}(e_-)\phi^{-1}(e_+)} \rangle \overrightarrow{e_-e_+}.$$
Hence, for all $x,y \in E(G)$, \begin{align*}
\langle  \hat{\phi}(x), y \rangle &=  \sum_{e \in E} \langle x,  \overrightarrow{\phi^{-1}(e_-)\phi^{-1}(e_+)} \rangle \langle y, \overrightarrow{e_-e_+}\rangle\\&=\sum_{e \in E} \langle x, \overrightarrow{e_-e_+} \rangle \langle y,  \overrightarrow{\phi(e_-)\phi(e_+)} \rangle. \end{align*} Thus,
\begin{align*}  \hat{\phi}^*(y) & = \sum_{e \in E}  \langle y,  \overrightarrow{\phi(e_-)\phi(e_+)} \rangle \overrightarrow{e_-e_+}\\
&= \sum_{e \in E} \langle y, \overrightarrow{e_-e_+}\rangle  \overrightarrow{\phi^{-1}(e_-)\phi^{-1}(e_+)} \\
&=  \hat{\phi}^{-1}(y).
\end{align*}
\end{proof}

\begin{corollary} \label{cor: T^*} If $T \colon E(G) \rightarrow  E(G)$ is $\Gamma$-invariant then $T^*$ is also $\Gamma$-invariant.
\end{corollary}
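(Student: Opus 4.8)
The plan is to obtain the $\Gamma$-invariance of $T^*$ as a formal consequence of the preceding lemma by taking adjoints of the invariance identity for $T$. By definition, $T$ being $\Gamma$-invariant means that $T\hat{\phi} = \hat{\phi}T$ for every $\phi \in \Gamma$. First I would take the adjoint of both sides of this identity, using the elementary rule $(AB)^* = B^* A^*$ for the adjoint of a composition in the real inner product space $E(G) = \ell_2(E)$, which gives $\hat{\phi}^* T^* = T^* \hat{\phi}^*$.

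Next I would invoke the lemma just proved, namely $\hat{\phi}^* = \hat{\phi}^{-1}$, to rewrite the last identity as $\hat{\phi}^{-1} T^* = T^* \hat{\phi}^{-1}$. Conjugating by $\hat{\phi}$ — that is, multiplying on the left and on the right by $\hat{\phi}$ and cancelling $\hat{\phi}\hat{\phi}^{-1} = \hat{\phi}^{-1}\hat{\phi} = I$ — then yields $T^* \hat{\phi} = \hat{\phi} T^*$. Since $\phi \in \Gamma$ was arbitrary, this is exactly the statement that $T^*$ is $\Gamma$-invariant, completing the argument. (Alternatively, one could observe that $\hat{\phi}^{-1} = \widehat{\phi^{-1}}$ because $\phi \mapsto \hat{\phi}$ is a group homomorphism, and then reindex over $\Gamma$ by replacing $\phi$ with $\phi^{-1}$; the two routes give the same conclusion.)

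There is essentially no genuine obstacle here: the corollary is a direct formal consequence of the identity $\hat{\phi}^* = \hat{\phi}^{-1}$ together with the order-reversing behavior of the adjoint under composition. The only point meriting care is bookkeeping — ensuring that the adjoint is taken with respect to the standard inner product (so that the lemma's orthogonality conclusion $\hat{\phi}^* = \hat{\phi}^{-1}$ applies) and that the cancellation in the conjugation step is carried out on the correct side, so that the intertwining relation comes out as $T^*\hat{\phi} = \hat{\phi}T^*$ rather than its reverse.
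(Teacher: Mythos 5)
Your argument is correct and is exactly the intended one: the paper states this corollary without proof, as an immediate formal consequence of the preceding lemma $\hat{\phi}^* = \hat{\phi}^{-1}$, and your adjoint-taking plus reindexing over $\Gamma$ (or conjugation by $\hat{\phi}$) fills in precisely that implicit computation. Nothing further is needed.
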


\begin{theorem} \label{thm: Hammingcubeprojconstant} $\lambda(\operatorname{Lip}_0(\mathbb{Z}_2^n))  = \dfrac{n+1}{2}.$
\end{theorem}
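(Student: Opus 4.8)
The plan is to pass to the dual description of the projection constant and then exploit the uniqueness of invariant projections already established for the Hamming cube. By the proof of Proposition~\ref{prop: minimalnormprops}(iii), the isometry $\operatorname{Lip}_0(\mathbb{Z}_2^n) = \tc(\mathbb{Z}_2^n)^*$ identifies $\operatorname{Lip}_0(\mathbb{Z}_2^n)$ with $(B(\mathbb{Z}_2^n), \|\cdot\|_\infty) \subseteq \ell_\infty(E_n)$ and yields
\[ \lambda(\operatorname{Lip}_0(\mathbb{Z}_2^n)) = \inf\{\|Q\|_\infty : Q \text{ is a projection from } \ell_\infty(E_n) \text{ onto } B(\mathbb{Z}_2^n)\}. \]
First I would apply the averaging construction \eqref{E:InvProj}: since $\Gamma_n = \au(\mathbb{Z}_2^n)$ acts on $\ell_\infty(E_n)$ by signed permutations of the edge basis, hence by isometries leaving $B(\mathbb{Z}_2^n)$ invariant, starting from a minimal projection $Q$ onto $B(\mathbb{Z}_2^n)$ the averaged operator $Q_{\mathrm{inv}}$ is again a projection onto $B(\mathbb{Z}_2^n)$ with $\|Q_{\mathrm{inv}}\|_\infty \le \|Q\|_\infty$. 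Thus the infimum is attained at a $\Gamma_n$-invariant projection.

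The key point is that this minimal invariant projection is forced to be $Q_n$ itself. Theorem~\ref{thm: uniqueorth}, applied with $A_2^n = \mathbb{Z}_2^n$, asserts that $P_{\orth}$ is the \emph{unique} $\Gamma_n$-invariant projection onto $Z(\mathbb{Z}_2^n)$. The correspondence $Q \mapsto I - Q$ is a bijection between projections onto $B(\mathbb{Z}_2^n)$ and projections onto $Z(\mathbb{Z}_2^n)$ which preserves $\Gamma_n$-invariance (because $I$ is invariant; cf. also Corollary~\ref{cor: T^*}). Consequently the $\Gamma_n$-invariant projection onto $B(\mathbb{Z}_2^n)$ is likewise unique, namely the orthogonal projection $Q_n = I - P_{\orth}$. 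Hence the infimum above is attained precisely at $Q_n$, and $\lambda(\operatorname{Lip}_0(\mathbb{Z}_2^n)) = \|Q_n\|_\infty$.

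It then remains to replace the $\ell_\infty$-operator norm by the already-computed $\ell_1$-operator norm. Since $Q_n$ is an orthogonal projection, its matrix in the standard edge basis is symmetric, so its maximal absolute row sum equals its maximal absolute column sum; equivalently $\|Q_n\|_\infty = \|Q_n^T\|_1 = \|Q_n\|_1$. Combining this with Theorem~\ref{thm: normoforthoghammingcube} gives
\[ \lambda(\operatorname{Lip}_0(\mathbb{Z}_2^n)) = \|Q_n\|_\infty = \|Q_n\|_1 = \frac{n+1}{2}, \]
as claimed.

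The genuinely hard computation — the exact value $\|Q_n\|_1 = (n+1)/2$ — has already been carried out in Theorem~\ref{thm: normoforthoghammingcube} through the recursion for the coefficients $b_k$ and $c_k$, so the remaining obstacle is purely conceptual: one must be certain that the minimal projection onto $B(\mathbb{Z}_2^n)$ is genuinely forced to be orthogonal. This is exactly where the uniqueness statement of Theorem~\ref{thm: uniqueorth} is indispensable, since without it an invariant minimal projection could a priori be nonorthogonal and the $\ell_\infty$-norm would not reduce to Theorem~\ref{thm: normoforthoghammingcube}. In writing up the details I would verify carefully that the averaged operator has range exactly $B(\mathbb{Z}_2^n)$ and that the symmetry argument is applied to the $\|\cdot\|_\infty$ operator norm, so that the transition $\|Q_n\|_\infty = \|Q_n\|_1$ is legitimate.
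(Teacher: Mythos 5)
Your overall route is the same as the paper's: identify $\lambda(\operatorname{Lip}_0(\mathbb{Z}_2^n))$ with the minimal norm of a projection from $\ell_\infty(E_n)$ onto $B(\mathbb{Z}_2^n)$, average to reduce to $\Gamma_n$-invariant projections, use the uniqueness of the invariant projection to conclude that the minimizer is $Q_n$, and finish with $\|Q_n\|_\infty=\|Q_n\|_1=(n+1)/2$ from Theorem~\ref{thm: normoforthoghammingcube}. However, the step in which you transfer uniqueness from $Z(\mathbb{Z}_2^n)$ to $B(\mathbb{Z}_2^n)$ is wrong as written. The map $Q\mapsto I-Q$ is \emph{not} a bijection between projections onto $B(\mathbb{Z}_2^n)$ and projections onto $Z(\mathbb{Z}_2^n)$: if $Q$ is a (non-orthogonal) projection with range $B(\mathbb{Z}_2^n)$, then $I-Q$ is a projection with kernel $B(\mathbb{Z}_2^n)$ and range $\ker Q$, and $\ker Q$ need not equal $Z(\mathbb{Z}_2^n)$. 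So $I-Q$ is generally not a projection onto $Z(\mathbb{Z}_2^n)$, and Theorem~\ref{thm: uniqueorth} (via Corollary~\ref{cor: 23}) cannot be applied to it.

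The correct correspondence — and the one the paper actually uses — is $Q\mapsto I-Q^*$: since $Q$ has range $B(\mathbb{Z}_2^n)$, its adjoint $Q^*$ has kernel $B(\mathbb{Z}_2^n)^\perp=Z(\mathbb{Z}_2^n)$, so $I-Q^*$ is a projection \emph{onto} $Z(\mathbb{Z}_2^n)$, and it is $\Gamma_n$-invariant by Corollary~\ref{cor: T^*} (this is precisely why that corollary is needed; invariance of $I$ alone does not suffice). Uniqueness then gives $I-Q^*=P_{\orth}$, hence $Q^*=Q_n$ and, since $Q_n$ is self-adjoint, $Q=Q_n$. With this repair the rest of your argument goes through: the averaging step, the identification of the minimal projection with $Q_n$, and the symmetry argument giving $\|Q_n\|_\infty=\|Q_n\|_1$ are all correct, and the numerical value comes from Theorem~\ref{thm: normoforthoghammingcube} exactly as you say.
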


\begin{proof}  Let $Q$ be a $\Gamma_n$-invariant projection from $E(\mathbb{Z}_2^n)$ onto $B(\mathbb{Z}_2^n)$.
By Corollary~\ref{cor: T^*}, $I - Q^*$ is a $\Gamma_n$-invariant
projection onto $B(\mathbb{Z}_2^n)^\perp = Z(\mathbb{Z}_2^n)$.
Hence, by Corollary~\ref{cor: 23}, $I-Q^* = P_n$. So $Q = I-P_n =
Q_n$. So $Q_n$ is a minimal projection from $(E(\mathbb{Z}_2^n),
\|\cdot\|_\infty)$ onto $(B(\mathbb{Z}_2^n),\|\cdot\|_\infty)$.
Since  $(B(\mathbb{Z}_2^n),\|\cdot\|_\infty)$ is isometrically
isomorphic to  $\operatorname{Lip}_0(\mathbb{Z}_2^n)$,
Theorem~\ref{thm: Hammingcubeprojconstant} gives
$$ \lambda(\operatorname{Lip}_0(\mathbb{Z}_2^n))  = \|Q_n\|_\infty = \|Q_n\|_1 = \frac{n+1}{2}.$$
\end{proof}

\begin{corollary}\label{C:BM} $$\frac{n+1}{2} \le d_{\BM}(\tc(\mathbb{Z}_2^n), \ell_1^{2^n -1})\le 2n.$$ \end{corollary}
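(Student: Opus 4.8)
The plan is to pass to the dual spaces and treat the two inequalities separately. Since $\tc(\mathbb{Z}_2^n)^*=\lip_0(\mathbb{Z}_2^n)$ and $(\ell_1^{2^n-1})^*=\ell_\infty^{2^n-1}$, the Banach--Mazur distance is invariant under duality, so that
$$d_{\BM}(\tc(\mathbb{Z}_2^n),\ell_1^{2^n-1})=d_{\BM}(\lip_0(\mathbb{Z}_2^n),\ell_\infty^{2^n-1}),$$
exactly as in the proof of Proposition~\ref{prop: minimalnormprops}(iv). I will establish both bounds for the right-hand quantity.

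For the lower bound I would invoke the chain of inequalities already appearing in the proof of Proposition~\ref{prop: minimalnormprops}(iv): since $\lambda(\ell_\infty^N)=1$ and the projection constant is monotone under the Banach--Mazur distance, one has $d_{\BM}(\lip_0(\mathbb{Z}_2^n),\ell_\infty^{2^n-1})\ge \lambda(\lip_0(\mathbb{Z}_2^n))$. By Theorem~\ref{thm: Hammingcubeprojconstant} the right-hand side equals $\tfrac{n+1}{2}$, which is precisely the desired lower bound.

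For the upper bound I would exhibit an explicit isomorphism. Recall that $\lip_0(\mathbb{Z}_2^n)$ consists of the functions $f$ on $\{0,1\}^n$ with $f(\mathbf 0)=0$, normed by the Lipschitz constant $\|f\|=\max_{uv\in E}|f(u)-f(v)|$ (the maximum over edges suffices for the graph metric). Define $T\colon \lip_0(\mathbb{Z}_2^n)\to \ell_\infty^{2^n-1}$ by $Tf=(f(v))_{v\ne \mathbf 0}$, which is clearly a linear isomorphism. The two operator norms are controlled by elementary inequalities. Since the diameter of $\mathbb{Z}_2^n$ equals $n$, for every $v\ne\mathbf 0$ we have $|f(v)|=|f(v)-f(\mathbf 0)|\le d(v,\mathbf 0)\,\|f\|\le n\|f\|$, whence $\|T\|\le n$. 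Conversely, for any edge $uv$ we have $|f(u)-f(v)|\le |f(u)|+|f(v)|\le 2\|Tf\|_\infty$ (using $f(\mathbf 0)=0$ when one endpoint is the base point), whence $\|T^{-1}\|\le 2$. Therefore $d_{\BM}(\lip_0(\mathbb{Z}_2^n),\ell_\infty^{2^n-1})\le \|T\|\,\|T^{-1}\|\le 2n$, and duality transfers this bound to $\tc(\mathbb{Z}_2^n)$.

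There is essentially no serious obstacle: both halves reduce to facts already in hand (Theorem~\ref{thm: Hammingcubeprojconstant} and the duality remark in the proof of Proposition~\ref{prop: minimalnormprops}) together with the observation that the coordinate map $f\mapsto (f(v))_{v\ne\mathbf 0}$ has small norm in both directions. The only point requiring (minimal) care is the bookkeeping of the diameter $n$ and of the base-point normalization $f(\mathbf 0)=0$ in the two norm estimates.
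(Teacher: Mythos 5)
Your proposal is correct and follows essentially the same route as the paper: the lower bound via $d_{\BM}(\lip_0(\mathbb{Z}_2^n),\ell_\infty^{2^n-1})\ge\lambda(\lip_0(\mathbb{Z}_2^n))=\tfrac{n+1}{2}$ together with duality, and the upper bound via the two-sided comparison of the Lipschitz norm with the sup norm (your explicit coordinate map $T$ with $\|T\|\le n$, $\|T^{-1}\|\le 2$ is just the paper's inequality $\|f\|_\infty/n\le\lip(f)\le 2\|f\|_\infty$ phrased operator-theoretically). No gaps.
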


\begin{proof} Suppose that $f \colon \mathbb{Z}_2^n \rightarrow \mathbb{R}$ satisfies $f((0,0,\dots,0))=0$. Then
$$\dfrac{\|f\|_\infty}{n}
\le \sup_{x\ne y \in \mathbb{Z}_2^n} \dfrac{|f(x)-f(y)|}{d(x,y)}\\
\le 2\|f\|_\infty. $$ Hence $d_{\BM}(\operatorname{Lip}_0(\mathbb{Z}_2^n), \ell_\infty^{2^n -1})\le 2n.$ On the other hand,
$$d_{\BM}(\operatorname{Lip}_0(\mathbb{Z}_2^n), \ell_\infty^{2^n -1})\ge \lambda(\operatorname{Lip}_0(\mathbb{Z}_2^n)) = \frac{n+1}{2}.$$
By duality,  $d_{\BM}(\operatorname{Lip}_0(\mathbb{Z}_2^n), \ell_\infty^{2^n -1}) =d_{\BM}(\tc(\mathbb{Z}_2^n), \ell_1^{2^n -1})$.
\end{proof}

\begin{remark} It is known that  $cn\le c_1(\tc(\mathbb{Z}_2^n))\le
Cn$ for some absolute constants $0<c\le C<\infty$. The left-hand
side inequality is due to \cite[Corollary 2]{KN06}, the right-hand
side inequality is due to \cite{Cha02,IT03,FRT04}. The left-hand
side of the inequality implies a weaker version of the left-hand
side inequality in Corollary \ref{C:BM}.
\end{remark}

\begin{acknowledgement} We thank Kevin Ford for numerical results related to the recurrence relation in Lemma~\ref{lem: recurrence}.
The first author was supported by Simons Foundation Collaboration
Grant No. 849142. The second author was supported by Simons
Foundation Collaboration Grant No. 636954.  The first and second
authors would like to thank the Isaac Newton Institute, Cambridge,
for support and hospitality during the programme Approximation,
Sampling and Compression in Data Science, where work on this paper
was undertaken. This work was supported by ESPRC Grant no
EP/K032208/1. The third-named author gratefully acknowledges the
support by the National Science Foundation grant NSF DMS-1953773.
\end{acknowledgement}

\end{document}